\def\R{{{{\rm l} \kern -.15em {\rm R}}}}
\def\N{{{{\rm l} \kern -.15em {\rm N}}}}
\def\E{{{{\rm l} \kern -.15em {\rm E}}}}
\def\P{{{{\rm l} \kern -.15em {\rm P}}}}
\def\D{{{{\rm l} \kern -.15em {\rm D}}}}
\def\L{{{{\rm l} \kern -.15em {\rm L}}}}
\def\Z{{{{\rm Z} \kern -.35em {\rm Z}}}}
\def\IE{{\mathbb E}}
\def\IP{{\mathbb P}}
\def\IR{{\mathbb R}}
\def\IN{{\mathbb N}}
\def\IZ{{\mathbb Z}}
\def\ss{\subset\subset}
\def\n{\noindent}
\def\dsl{\textstyle\sum\limits}
\def\dis{\displaystyle}
\def\o{\omega}
\def\fr{\mbox{\footnotesize $\dis\frac{1}{2}$}}
\def\ov{\overline}
\def\f{\footnotesize}
\def\r{\rightarrow}
\def\point{{\mbox{\large $.$}}}
\def\wt{\widetilde}
\def\cA{{\cal A}}
\def\cB{{\cal B}}
\def\cE{{\cal E}}
\def\cL{{\cal L}}
\def\cT{{\cal T}}
\def\cE{{\cal E}}
\def\cI{{\cal I}}
\def\cK{{\cal K}}
\def\cH{{\cal H}}
\def\cM{{\cal M}}
\def\cV{{\cal V}}
\def\cW{{\cal W}}
\def\cY{{\cal Y}}
\newtheorem{theorem}{Theorem}[section]
\newtheorem{lemma}[theorem]{Lemma}
\newtheorem{corollary}[theorem]{Corollary}
\newtheorem{proposition}[theorem]{Proposition}
\newtheorem{remark}[theorem]{Remark}
\begin{document}

\noindent

~

\bigskip
\begin{center}
{\bf VACANT SET OF RANDOM INTERLACEMENTS AND PERCOLATION}
\end{center}

\begin{center}
Alain-Sol Sznitman
\end{center}

\begin{center}

\end{center}

\bigskip
\begin{abstract}
We introduce a model of random interlacements made of a countable collection of doubly infinite paths on $\IZ^d$, $d \ge 3$. A non-negative parameter $u$ measures how many trajectories enter the picture. This model describes in the large $N$ limit the microscopic structure in the bulk, which arises when considering the disconnection time of a discrete cylinder $(\IZ/N\IZ)^{d-1} \times \IZ$ by simple random walk, or the set of points visited by simple random walk on the discrete torus $(\IZ/N \IZ)^d$ at  times of order $u N^d$. In particular we study the percolative properties of the vacant set left by the interlacement at level $u$, which is an infinite connected translation invariant random subset of $\IZ^d$. We introduce a critical value $u_*$ such that the vacant set percolates for $u < u_*$ and does not percolate for $u > u_*$. Our main results show that $u_*$ is finite when $d \ge 3$ and strictly positive when $d \ge 7$.
\end{abstract}

\vfill
\n
Departement Mathematik  \\
ETH Z\"urich\\
CH-8092 Z\"urich\\
Switzerland

\vfill
~
\newpage
\thispagestyle{empty}
~

\newpage
\setcounter{page}{1}

 \setcounter{section}{-1}
 
 \section{Introduction}
 \setcounter{equation}{0}
 
This article introduces a model of random interlacements consisting of a countable collection of doubly infinite trajectories on $\IZ^d$, $d \ge 3$. A certain non-negative parameter $u$ governs the amount of trajectories which enter the picture. The union of the supports of these trajectories defines the interlacement at level $u$. It is an infinite connected translation invariant random subset of $\IZ^d$.~Our main purpose is to study whether this random ``fabric'' is ``rainproof'' or not, i.e. whether its complement, the vacant set at level $u$, does not, or does contain an infinite connected component. This issue is related to the broad question ``how can random walk paths create interfaces in high dimension?''. The model we construct has a special interest because in a heuristic sense it offers a microscopic description of the ``texture in the bulk'' for two problems related to this broad question. One problem pertains to the percolative properties in the large $N$ limit of the vacant set left on the discrete torus $(\IZ/N \IZ)^d$, $d \ge 3$, by the trajectory of simple random walk with uniformly distributed starting point, up to times that are proportional to the number of sites of the discrete torus, cf.~\cite{BenjSzni06}. The other problem pertains to the large $N$ behavior of the disconnection time of a discrete cylinder $(\IZ/N\IZ)^{d-1} \times \IZ$, $d \ge 3$, by simple random walk, cf.~\cite{DembSzni06}, \cite{DembSzni08}, and also \cite{Szni08a}. In this work we establish a phase transition: for $u < u_*$, the vacant set at level $u$ does percolate, whereas for $u > u_*$, it does not. The critical value $u_*$ is shown to be non-degenerate (i.e. positive and finite), when $d \ge 7$, and finite for all $d \ge 3$. The results presented here have triggered some progress on the questions mentioned above, see in particular  \cite{SidoSzni08},  \cite{Szni07b}, \cite{Szni08b},\cite{Teix08}, \cite{Wind08}.
 
 \medskip
 We now describe the model. We consider the spaces $W_+$ and $W$ of infinite, respectively doubly infinite, nearest neighbor paths on $\IZ^d$, $d \ge 3$, that spend finite time in bounded subsets of $\IZ^d$. We denote with $P_x$, $x \in \IZ^d$, the law on $W_+$ of simple random walk starting at $x$. This is meaningful since the walk is transient in view of the assumption $d \ge 3$. We write $X_n, n \ge 0$, or $X_n, n\in \IZ$, for the canonical coordinates on $W_+$, or on $W$. We also consider the set of doubly-infinite trajectories modulo time-shifts
 \begin{equation}\label{0.1}
 \mbox{$W^* = W / \sim$, where $w \sim w^\prime$, if $w(\cdot) = w^\prime(\cdot + k)$, for some $k \in \IZ$}\,.
 \end{equation}
 
 \n
 We denote with $\pi^* : W \rightarrow W^*$, the canonical projection.
 
 \medskip
 The random interlacements are governed by a Poisson point process $\o = \sum_{i \ge 0} \delta_{(w^*_i,u_i)}$ on $W^* \times \IR_+$, with intensity measure $\nu(dw^*) du$, where $\nu$ is a certain $\sigma$-finite measure on $W^*$, which we now describe. For any finite subset $K$ of $\IZ^d$, we denote with $e_K$ the equilibrium measure of $K$,  see (\ref{1.6}) for the definition, with $W^0_K$ the subset of $W$ of trajectories entering $K$ at time $0$:
 \begin{equation}\label{0.2}
 \mbox{$W^0_K = \{w \in W; \,w(0) \in K$ and $w(n) \notin K$, for all $n < 0\}$}\,,
 \end{equation}
 
\medskip\n
 and with $W^*_K = \pi^*(W^0_K)$ the subset of $W^*$ of equivalence classes of trajectories entering $K$. We show in Theorem \ref{theo1.1} that there is a unique $\sigma$-finite measure $\nu$ on $W^*$ such that
 \begin{equation}\label{0.3}
 \mbox{$1_{W^*_K} \nu = \pi^*\circ Q_K$, for any finite subset $K$ of $\IZ^d$}\,,
 \end{equation}
 
 \smallskip\n
 where $Q_K$ is the finite measure supported on $W^0_K$ such that
 \begin{equation}\label{0.4}
 \begin{array}{rl}
 {\rm i)} & Q_K (X_0 \in \cdot) = e_K(\cdot)\,,
 \\[2ex]
 {\rm ii)} & \mbox{when $e_K(x) > 0$, conditionally on $X_0 = x$, $(X_n)_{n \ge 0}$, and $(X_{-n})_{n \ge 0}$ are}
 \\
 &\mbox{independent with respective distributions $P_x$ and $P_x$ conditioned}
 \\
 &\mbox{on $\{X_n \notin K$, for all $n \ge 1\}$}\,.
 \end{array}
 \end{equation}
 
 \n
 The motivation for such a requirement stems from Theorem 3.1 and (3.13) of \cite{BenjSzni06}, where the large $N$ limit of certain suitably defined excursions to a box of size $L < < N$, by simple random walk on $(\IZ/N\IZ)^d$ was investigated, and from the alternative characterization of $Q_K$ given in (\ref{1.26}), see also Remarks \ref{rem1.2} 2) and \ref{rem1.6} 3). Similar measures appear in \cite{Weil70} and \cite{Sil74}, p. 61, following an outline in \cite{Hunt60}. The construction we give here bypasses projective limit arguments: we instead glue together expressions for $\nu$ read in "local charts". 
 
 We denote with $\Omega$ the canonical space where $\o$ varies, cf.~(\ref{1.16}), and with $\IP$ the law turning $\o$ into a Poisson point process with intensity $\nu(d w^*) du$. The law $\IP$ enjoys a number of remarkable properties. It is invariant under translation of trajectories by a constant vector, and under time-reversal of trajectories, cf.~Proposition \ref{prop1.3}. Also when $K$ is a finite subset of $\IZ^d$, we introduce the random point process on $W_+ \times \IR_+$:
 \begin{equation}\label{0.5}
 \mu_K(\o) = \dsl_{i \ge 0} \delta_{(w_i,u_i)} \,1\{w_i^* \in W^*_K\}, \;\mbox{if} \; \o = \dsl_{i \ge 0} \delta_{(w^*_i,u_i)}\,,
 \end{equation}
 
 \n
where for $w_i^* \in W_K$, $w_i$ denotes the unique trajectory in $W_+$ starting at time $0$, where $w_i^*$ enters $K$, and following from then on $w_i^*$ step by step, cf.~(\ref{1.18}) for the precise definition. We show in Proposition \ref{prop1.3} that 
 \begin{equation}\label{0.6}
 \mbox{$\mu_K$ is a Poisson point process with intensity $P_{e_K}(dw) du$}, 
 \end{equation}
 
 \n
where $P_{e_K} = \sum_x e_K(x) P_x$. Further the point processes $\mu_K$, as $K$ varies, satisfy a compatibility condition, cf.~(\ref{1.21}), (\ref{1.46}).
 
\medskip
It may be worth pointing out that much of the above constructs, except for the aspects related to translation invariance, can be performed in the more general set-up of a transient random walk attached to an infinite locally finite connected graph with positive weights along its edges, in place of simple random walk on $\IZ^d$, $d \ge 3$, cf.~Remark \ref{rem1.4}.

\medskip
The {\it interlacement at level $u$} is defined as
\begin{equation}\label{0.7}
\cI^u(\o) = \bigcup\limits_{u_i \le u} w_i^*(\IZ), \;\mbox{if}\; \o = \dsl_{i \ge 0} \delta_{(w^*_i,u_i)} \in \Omega\,,
\end{equation}

\n
where $w_i^*(\IZ)$ denotes the range of any $w$ with $\pi^*(w) = w^*_i$. The {\it vacant set at level $u$} is then
\begin{equation}\label{0.8}
\cV^u(\o) = \IZ^d \backslash \cI^u(\o)\,.
\end{equation}

\n
Clearly $\cI^u$ increases with $u$, whereas $\cV^u$ decreases with $u$. Also one can see that the restriction of $\cI^u$ to $K$ is determined by $\mu_{K^\prime}(dw \times [0,u])$, when $K \subset K^\prime$ are finite subsets of $\IZ^d$, cf.~(\ref{1.54}). Together with (\ref{0.6}) one finds that the restriction of $\cI^u$ to $K$ can be visualized as the trace on $K$ of a Poisson cloud of finite trajectories.  Its intensity measure is proportional to the law of simple random walk run up to the last visit to $K$, with initial distribution the harmonic measure of $K$ viewed from infinity, i.e. $e_K$ normalized by its total mass cap$(K)$, the capacity of $K$, and the  proportionality factor equals $u \,{\rm cap}(K)$, cf.~Remark \ref{rem1.6} 3). We also show in Corollary \ref{cor2.3} and Proposition \ref{prop1.5} that:
\begin{equation}\label{0.9}
\mbox{$\IP$-a.s., $\cI^u$ is an infinite connected subset, and}
\end{equation}
\begin{equation}\label{0.9a}
\IP[\cV^u \supseteq K] = \exp\{- u\, {\rm cap} (K)\}\,,
\end{equation}

\n
for $u > 0$, and $K \subset \IZ^d$, finite. Formula (\ref{0.9a}) characterizes the law of $\cV^\nu$, see Remark \ref{rem2.2} 2). As a special case, cf.~(\ref{1.58}), (\ref{1.59}), one finds that for $x,y \in \IZ^d$,
\begin{equation}\label{0.10}
\IP[x \in \cV^u] = \exp \Big\{- \dis\frac{u}{g(0)}\Big\}, \;\IP[\{x,y\} \subseteq \cV^u] = \exp\Big\{-\dis\frac{2u}{g(0) + g(y - x)}\Big\}\,,
\end{equation}

\medskip\n
where $g(y-x)$ denotes the Green function, cf.~(\ref{1.5}). The identities in (\ref{0.10}) are in essence formulas (2.26) and (3.6) of Brummelhuis-Hilhorst \cite{BrumHilh91} in their theoretical physics article on the covering of a periodic lattice by a random walk, see also Remark \ref{rem1.6} 5). They display the presence of long range dependence in the random set $\cV^u$, with a correlation of the events $\{x \in \cV^u\}$ and $\{y \in \cV^u\}$ decaying as $c(u) |x-y|^{-(d-2)}$, when $|x-y|$ tends to infinity.

\medskip
As mentioned above, the main object of this work is to investigate the presence or absence of an infinite connected component in $\cV^u$. We establish in Theorem \ref{theo2.1} the ergodicity of the (properly defined) distribution of the random set $\cV^u$, from which easily follows a zero-one law for the probability of occurrence of an infinite connected component in $\cV^u$. It is then straightforward to see that this probability equals one precisely when
\begin{equation}\label{0.11}
\eta(u) \stackrel{\rm def}{=} \IP[ \mbox{$0$ belongs to an infinite connected component of $\cV^u$}] > 0\,.
\end{equation}

\n
The function $\eta(\cdot)$ is non-increasing and just as in the case of Bernoulli percolation, cf.~\cite{Grim99}, we can introduce the critical value:
\begin{equation}\label{0.12}
u_* = \inf\{u \ge 0, \eta(u) = 0\} \in [0,\infty]\,.
\end{equation}

\n
The main results of this article concern the non-degeneracy of $u_*$. We show in Theorem \ref{theo3.5} that $\cV^u$ does not percolate for large $u$, i.e.
\begin{equation}\label{0.13}
u_* < \infty, \;\mbox{for $d \ge 3$}\,,
\end{equation}

\n
and in Theorem \ref{theo4.3} that when $d \ge 7$, $\cV^u$ percolates for small $u > 0$, i.e.
\begin{equation}\label{0.14}
u_* > 0, \;\mbox{when $d \ge 7$}\,.
\end{equation}

\n
Subsequent developments initiated by the present article respectively relating random interlacements on $\IZ^{d+1}$ and on $\IZ^d$ to the local picture left by simple random walk on $(\IZ/N \IZ)^d \times \IZ$  run up to times of order $N^{2d}$, and random walk on $(\IZ/N \IZ)^d$ run up to times of order $N^d$ can be found in \cite{Szni07b}, \cite{Wind08}. In this light the results presented here with their proofs also have a bearing on the problems investigated in \cite{DembSzni06}, \cite{DembSzni08}, \cite{BenjSzni06}. In particular (\ref{0.13}) offers evidence that when $d\ge 2$ the laws of $T_N / N^{2d}$ are tight, if $T_N$ denotes the disconnection time of $(\IZ/N \IZ)^d \times \IZ$ studied in \cite{DembSzni06}, \cite{DembSzni08}. It signals that one should be able to remove the logarithmic terms present in the (very general) upper bound of \cite{Szni08a} and bypass the strategy based on the domination of $T_N$ by the cover time of $(\IZ/N \IZ)^d \times \{0\}$, by relying instead on the emergence of a non-percolative local picture of the vacant set left by random walk. These heuristic considerations can be made precise and lead to the above claimed tightness, see \cite{Szni08b}. Similarly (\ref{0.14}) offers evidence that the lower bound on $T_N$ in \cite{DembSzni08}, which shows the tightness of $N^{2d} / T_N$ when $d \ge 17$, should hold as soon as $d \ge 6$, (and in fact for all $d \ge 2$, in view of the recent work \cite{SidoSzni08}). Analogously in the context of \cite{BenjSzni06},  (\ref{0.13}), (\ref{0.14}), and \cite{SidoSzni08} give support for the typical absence for large $N$ of a giant component in the vacant set left by simple random walk on $(\IZ/N \IZ)^d$, $d \ge 3$, run up to time $uN^d$, if $u$ is large, and for its typical presence when $u$ is chosen small. 

\medskip
There are many natural questions left untouched by the present article. Is there a unique infinite component when $\cV^u$ percolates, (see Remark \ref{rem2.2} 3))? The answer is affirmative, as shown in \cite{Teix08}. Is $u_* > 0$, when $3 \le d \le 6$, as suggested by simulations? This is indeed the case, see \cite{SidoSzni08}. However it is presently unknown whether the vacant set percolates at criticality, i.e. when $u = u_*$, or what the large $d$ behavior of $u_*$ is. We refer to Remark \ref{rem4.4} 3) for further open problems.

\medskip
We will now comment on the proofs of (\ref{0.13}) and (\ref{0.14}). Most of the work goes into the proof of (\ref{0.13}). This is due to the long range dependence in the model and the fact highlighted by (\ref{0.9a}) that $\IP[\cV^u \supseteq K]$ does not decay exponentially with $|K|$. This feature creates a very serious obstruction to the Peierls-type argument commonly met in Bernoulli percolation, see \cite{Grim99}, p.~16, when one attempts to show that $\cV^u$ does not percolate for large $u$. We instead use a renormalization technique to prove (\ref{0.13}) and consider a sequence of functions on $\IR_+$:
\begin{equation}\label{0.15}
\begin{split}
p_n(u) \mbox{``$=$''} & \mbox{$\IP$-probability that $\cV^u$ contains a path from a given block}
\\
&\mbox{of side-length $L_n$ to the complement of its $L_n$-neighborhood},
\end{split}
\end{equation}

\n
(cf.~(\ref{3.8}) for the precise definition), where $L_n$ is a rapidly growing sequence of length scales, cf.~(\ref{3.1}), (\ref{3.2}),
\begin{equation}\label{0.16}
L_n \approx L_0^{(1+a)^n}, \, n \ge 0,\; \mbox{with $a = \dis\frac{1}{100d}$}\,.
\end{equation}

\n
The key control appears in Proposition \ref{prop3.1}, where we prove that for $L_0 \ge c$, $u_0 > c(L_0)$, and an increasing but bounded sequence $u_n$ depending on $L_0,u_0$, cf.~(\ref{3.9}),
\begin{equation}\label{0.17}
p_n(u_n) \underset{n \r \infty}{\longrightarrow} 0\,.
\end{equation}

\n
This immediately implies that $\eta(u) = 0$, for $u \ge u_\infty = \sup u_n(< \infty)$, and proves (\ref{0.13}). The principal difficulty in proving (\ref{0.17}) resides in the derivation of a suitable recurrence relation between $p_n(\cdot)$ and $p_{n+1}(\cdot)$, cf.~(\ref{3.52}), due to the long range dependence in the model. In a suitable sense we use a ``sprinkling technique'', where more independent paths are thrown in, so as to dominate long range dependence. This is reflected in the fact that we evaluate $p_n(\cdot)$ at an increasing but convergent sequence $u_n$ in the key control (\ref{3.10}), (a more quantitative version of (\ref{0.17})). Incidentally the sequence of length scales appearing in  (\ref{0.16}) corresponds to the choice of a small $a$,  so as to control the combinatorial complexity involved in selecting boxes of scale $L_n$ within a given box of scale $L_{n+1}$, see (\ref{3.13}), but also to the choice of a fast enough growth, so as to discard paths making more than a certain finite number of excursions at distance of order $L_{n+1}$, see below (\ref{3.25}), and (\ref{3.55}), (\ref{3.65}).

\medskip
The proof of (\ref{0.14}) in Theorem \ref{theo4.3} employs a similar albeit simpler renormalization strategy. One can instead employ a Peierls-type argument to show that $\cV^u$ percolates for small $u > 0$, when $d$ is sufficiently large, very much in the spirit of Section 2 of \cite{BenjSzni06}, or Section 1 of \cite{DembSzni08}. It is based on an exponential bound on $\IP[\cI^u \supseteq A]$, for $A$ finite subset of $\IZ^2$, (where $\IZ^2$ is viewed as a subset of $\IZ^d$), cf.~(\ref{2.23}) in Theorem \ref{theo2.4}. This estimate mirrors the exponential controls derived in Theorem 2.1 of \cite{BenjSzni06} and Theorem 1.2 of \cite{BenjSzni06}. This strategy leads to a proof of (\ref{0.14}) when:
\begin{equation}\label{0.18}
7 \Big(\dis\frac{2}{d} + \Big(1 - \dis\frac{2}{d}\Big) \;q(d-2)\Big) < 1\,,
\end{equation}

\n
with $q(\nu)$ the return probability to the origin of simple random walk on $\IZ^\nu$. In practice this means $d \ge 18$, cf.~Remark \ref{rem2.5} 3). The technique we use works instead as soon as $d \ge 7$. It also shows, just as the Peierls-type argument does when (\ref{0.18}) holds, the existence of an infinite connected cluster in $\cV^u \cap \IZ^2$, for small $u > 0$.

\medskip
Let us now describe the organization of the article.

\medskip
In Section 1 we construct the model of random interlacements. The main task lies in the construction of the $\sigma$-finite measure $\nu$ entering the intensity of the Poisson point process we are after. This is done in Theorem \ref{theo1.1}. Basic properties of the model appear in Proposition \ref{prop1.3}, whereas Proposition \ref{prop1.5} shows (\ref{0.9a}), (\ref{0.10}).

\medskip
Section 2 shows the ergodicity of the law of $\cV^u$, and the zero-one law for the probability that $\cV^u$ percolates in Theorem \ref{theo2.1}. We also prove (\ref{0.9}) in Corollary \ref{cor2.3}. In Theorem \ref{theo2.4} we derive exponential bounds that provide further link of the present model to \cite{BenjSzni06}, \cite{DembSzni08}.

\medskip
Section 3 is devoted to the proof of (\ref{0.13}) in Theorem \ref{theo3.5}. The main renormalization step is contained in Proposition \ref{prop3.1}. 

\medskip
Section 4 shows (\ref{0.14}) in Theorem \ref{theo4.3}. The principal step appears in Proposition \ref{prop4.1}.

\medskip
Finally let us state our convention regarding constants. Throughout the text $c$ or $c^\prime$ denote positive constants which solely depend on $d$, with values changing from place to place. The numbered constants $c_0,c_1,\dots$ are fixed and refer to the value at their first appearance in the text. Dependence of constants on additional parameters appears in the notation. For instance $c(L_0,u_0)$ denotes a positive constant depending on $d, L_0, u_0$.

\bigskip\bigskip\n
{\bf Acknowledgements:} We wish to thank Yves Le Jan for useful references.

 \section{Basic model and some first properties}
 \setcounter{equation}{0}

The main object of this section is to introduce the basic model and present some of its properties. As explained in the Introduction the basic model comes as a Poisson point process on a suitable state space. The main task resides in the construction of the intensity measure of this point process. This is done in Theorem \ref{theo1.1}. We then derive some of its properties in Proposition \ref{prop1.3} as well as some of the properties of the vacant set left by the interlacement at level $u$, cf.~(0.9), in Proposition \ref{prop1.5}. We first begin with some notation.

\medskip
We write $\IN = \{0,1,2,\dots\}$ for the set of natural numbers. Given a non-negative real number $a$, we write $[a]$ for the integer part of $a$, and for real numbers $b,c$, we write $b \wedge c$ and $b \vee c$ for the respective minimum and maximum of $b$ and $c$. We denote with $|\cdot |$ and $|\cdot |_\infty$ the Euclidean and $\ell^\infty$-distances on $\IZ^d$. We write $B(x,r)$ for the closed $|\cdot |_\infty$-ball with center $x$ in $\IZ^d$ and radius $r \ge 0$, and $S(x,r)$ for the corresponding $|\cdot|_\infty$-sphere with center $x$ and radius $r$, (it is empty when $r$ is not an integer). We say that $x,y$ in $\IZ^d$ are neighbors, respectively $*$-neighbors, when $|x-y| = 1$, respectively $|x - y|_\infty = 1$. The notions of connected and $*$-connected subsets are defined accordingly, and so are the notions of nearest neighbor or $*$-nearest neighbor paths in $\IZ^d$. For $A,B$ subsets in $\IZ^d$, we denote with $A+B$ the subset of elements of the form $x + y$, with $x \in A, y \in B$ and with $d(A,B) = \inf\{|x - y|_\infty$; $x \in A, y \in B\}$, the $|\cdot |_\infty$-distance from $A$ to $B$. When $U$ is a subset of $\IZ^d$, we let $|U|$ stand for the cardinality of $U$, $\partial U$ for the exterior boundary of  $U$ and $\partial_{\rm int} U$ for the interior boundary of $U$:
\begin{equation}\label{1.1}
\partial U = \{x \in U^c; \exists y \in U, |x-y| = 1\}, \;\partial_{\rm int} U = \{x \in U; \exists y \in U^c, \,|x - y| = 1\}\,.
\end{equation}

\n
We write $U \subset \subset \IZ^d$ to express that $U$ is a finite subset of $\IZ^d$. In what follows, unless otherwise explicitly mentioned, we tacitly assume that $d \ge 3$.

\medskip
We consider $W_+$ and $W$ the spaces of trajectories:
\begin{equation}\label{1.2}
\begin{split}
W_+  = \big\{ & w \in (\IZ^d)^\IN; \;|w(n+1) - w(n)| = 1, \;\mbox{for all $n \ge 0$, and}
\\
& \lim\limits_n |w(n)| = \infty\big\}\,,
\\[2ex]
W   = \big\{ & w \in (\IZ^d)^\IZ; \;|w(n+1) - w(n)| = 1, \;\mbox{for all $n  \in  \IZ$, and}
\\
& \lim\limits_{|n| \rightarrow \infty} | w(n)| = \infty\big\}\,.
\end{split}
\end{equation}

\n
We denote with $X_n, n \ge 0$, and $X_n$, $n \in \IZ$, the respective canonical coordinates on $W_+$ and $W$, and write $\theta_n$, $n \ge 0$, and $\theta_n$, $n \in \IZ$, for the respective canonical shifts. We let $\cW_+$ and $\cW$ stand for the $\sigma$-fields on $W_+$ and $W$ generated by the canonical coordinates.

\medskip
Given $U \subseteq \IZ^d$, $w \in W_+$, we denote with $H_U(w)$, $T_U(w)$, $\wt{H}_U(w)$, the entrance time in $U$, the exit time from $U$, and the hitting time of $U$ for the trajectory $w$:
\begin{equation}\label{1.3}
\begin{split}
H_U(w) & = \inf\{n \ge 0; X_n (w) \in U\}, \;T_U(w) = \inf\{n \ge 0; X_n(w) \notin U\}\,,
\\
\wt{H}_U (w)& = \inf\{n \ge 1; X_n(w) \in U\}\,.
\end{split}
\end{equation}

\medskip\n
We often drop ``$w$'' from the notation and write $H_x,T_x,\wt{H}_x$, when $U = \{x\}$. Also when $w \in W$, we define $H_U(w)$ and $T_U(w)$ in a similar fashion replacing ``$n \ge 0$'' with ``$n \in \IZ$'' in  (\ref{1.3}), and $\wt{H}_U (w)$ just as in (\ref{1.3}). For $K \subseteq U$ in $\IZ^d$, $w \in W_+$, we consider $R_k,D_k,k \ge 1$, the successive returns to $K$ and departures from $U$ of the trajectory $w$:
\begin{equation}\label{1.4}
\begin{split}
R_1 &= H_K, \; D_1 = T_U \circ \theta_{H_K} + H_K, \;\mbox{and for $k \ge 1$}\,,
\\
R_{k+1}& = R_1 \circ \theta_{D_k} + D_k, \;D_{k+1}= D_1 \circ \theta_{D_k} + D_k\,,
\end{split}
\end{equation}
so that $0 \le R_1 \le D_1 \le \dots \le R_k \le D_k \le \dots \le \infty$.

\medskip
When $X$ is an integrable random variable and $A$ an event, we routinely write $E[X,A]$ in place of $E[X\,1_A]$ in what follows, with $E$ referring here to the relevant expectation. Given $x \in \IZ^d$, we write $P_x$ for the restriction to $(W_+, \cW_+)$ of the canonical law of simple random walk on $\IZ^d$ starting at $x$. Recall that $d \ge 3$, and $W_+$ has full measure under the canonical law. When $\rho$ is a positive measure on $\IZ^d$, we write $P_\rho$ for the measure $\sum_{x \in \IZ^d} \rho(x) P_x$. We denote with $g(\cdot,\cdot)$ the Green function of the walk:
\begin{equation}\label{1.5}
g(x,y) = \dsl_{n \ge 0} P_x [X_n = y], \;x,y \in \IZ^d\,,
\end{equation}

\n
and $g(y) = g(0,y)$ so that $g(x,y) = g(y-x)$, thanks to translation invariance. Given $K \subset \subset \IZ^d$, we write $e_K$ for the equilibrium measure of $K$, cap$(K)$ for the capacity of $K$, so that, cf.~Chapter 2 \S 2 of \cite{Lawl91}:
\begin{align}
e_K(x) = & \,P_x[\wt{H}_K = \infty], \;x \in K\,, \label{1.6}
\\
& \, 0, \;\mbox{if $x \notin K$}\,, \nonumber
\intertext{(note that $e_K$ is supported on $\partial_{\rm int} K),$}
\mbox{cap}(K) = &\,e_K (\IZ^d) \,\Big(= \dsl_{x \in \IZ^d} \, e_K(x)\Big), \;\mbox{and} \label{1.7}
\\
P_x[H_K < \infty]  =& \dis\int_K g(x,y) \,e_K(dy) \Big( = \dsl_{y \in K} g(x,y) \,e_K(y)\Big), \;\mbox{for $x \in \IZ^d$}\,.\label{1.8}
\end{align}

\n
The following bounds on $P_x [H_K < \infty]$, $x \in \IZ^d$, will be useful:
\begin{equation}\label{1.9}
\dsl_{y \in K} g(x,y) / \sup\limits_{z \in K} \Big(\dsl_{y \in K} g(z,y)\Big) \le P_x [H_K < \infty] \le \dsl_{y \in K} g(x,y) / \inf\limits_{z \in K} \Big(\dsl_{y\in K} g(z,y)\Big) \,.
\end{equation}

\medskip\n
They classically follow from the $L^1(P_x)$-convergence of the bounded martingale $M_n = \sum_{y \in K} g(X_{n \wedge H_K},y)$, $n \ge 0$, towards $1\{H_K < \infty\} \sum_{y \in K} g(X_{H_K},y)$.

\bigskip
The state space of the Poisson point process we wish to define involves the quotient space $W^*$ of equivalence classes of trajectories in $W$ modulo time shift, cf.~(\ref{0.2}). We recall that $\pi^*$ stands for the canonical projection on $W^*$. We endow $W^*$ with the canonical $\sigma$-field
\begin{equation}\label{1.10}
\cW^* = \{A \subseteq W^*; \,(\pi^*)^{-1} (A) \in \cW\}\,,
\end{equation}

\n
which is the largest $\sigma$-algebra such that $(W,\cW) \overset{\pi^*}{\longrightarrow} (W^*, \cW^*)$ is measurable. When $K \subset \subset \IZ^d$, we consider
\begin{equation}\label{1.11}
W_K = \{w \in W; \;X_n(w) \in K, \;\mbox{for some $n \in \IZ$}\}\,,
\end{equation}

\n
the subset of $W$ of trajectories entering $K$. We can write $W_K \in \cW$ as a countable partition into measurable sets (see below (\ref{1.3}) for the notation):
\begin{equation}\label{1.12}
W_K  = \bigcup\limits_{n \in \IZ} \;W^n_K, \;\mbox{where} \;\;W^n_K  = \{w \in W; \,H_K(w) = n\}\,.
\end{equation}

\n
We then introduce
\begin{equation}\label{1.13}
W^*_K = \pi^* (W_K) \;\big( = \pi^* (W^0_K)\big)\,,
\end{equation}
as well as the map
\begin{equation}\label{1.14}
s_K: W^*_K \rightarrow W, \;\mbox{with $s_K(w^*) = w^0$ the unique element of $W^0_K$ with $\pi^*(w^0) = w^*$}\,.
\end{equation}

\n
Note that $s_K(W^*_K) = W^0_K$ and $s_K$ is a section of $\pi^*$ over $W^*_K$, i.e. $\pi^* \circ s_K$ is the identity map on  $W^*_K$. It is then straightforward to check that for any $K \subset \subset \IZ^d$,
\begin{equation}\label{1.15}
\mbox{$W^*_K \in \cW^*$ and the trace of $\cW^*$ on $W^*_K$ coincides with $s^{-1}_K(\cW)$}\,.
\end{equation}

\n 
There is no natural way to globally identify  $W^*\times \IZ$ with $W$, but the maps $s_K$ enable us to identify $W^*_K$ with $W^0_K$ and $W^*_K\times \IZ$ with $W_K$. In a slightly pedantic way $\pi^*: (W, \cW) \rightarrow (W^*, \cW^*)$ with the transformations $\theta_n$, $n \in \IZ$, on the fiber of $\pi^*$ could be viewed as a ``principal fiber-bundle with group $\IZ$'', cf.~\cite{Spiv79}, p.~346. The construction of the key $\sigma$-finite measure $\nu$ in Theorem \ref{theo1.1} will involve checking compatibility and patching up expressions for $\nu$ "read in the local chart $s_K$", as $K$ varies over finite subsets of $\IZ^d$.

\medskip
We further need to introduce several spaces of point measures that we will routinely use in what follows. In particular we consider $\Omega$ and $M$ the spaces of point measures on $W^* \times \IR_+$ and $W_+ \times \IR_+$:
\begin{align}
\Omega = \Big\{ & \mbox{$\o = \dsl_{i \ge 0} \delta_{(w_i^*,u_i)}$, with $(w^*_i,u_i) \in W^* \times \IR_+, \, i \ge 0$, and} \label{1.16}
\\[-0.5ex]
&\mbox{$\o(W^*_K \times [0,u]) < \infty$, for any $K \subset \subset \IZ^d, u \ge 0\Big\}$}\,,\nonumber
\\[2ex]
M = \Big\{ & \mbox{$\mu = \dsl_{i\in I} \delta_{(w_i,u_i)}$, with $I$ a variable finite or infinite subset of $\IN$,} \label{1.17}
\\[-0.5ex]
&\mbox{$ \,(w_i,u_i) \in W_+ \times \IR_+,$ for  $ i \in I$, and $\mu (W_+ \times [0,u]) < \infty$, for $u \ge 0\Big\}$}\,,\nonumber
\end{align}

\n
We endow $\Omega$ with the $\sigma$-algebra $\cA$ generated by the evaluation maps $\o \rightarrow \o(D)$, where $D$ runs over $\cW^* \otimes \cB(\IR_+)$, cf.~(\ref{1.10}). Likewise we endow $M$ with the $\sigma$-algebra $\cM$ generated by the evaluation maps $\mu \rightarrow \mu(D)$, where $D$ runs over $\cW_+ \otimes \cB(\IR_+)$, cf.~below (\ref{1.2}). Given $K \subset \subset \IZ^d$, we then define the measurable maps $\mu_K: \Omega \rightarrow M$ and $\Theta_K: M \rightarrow M$ via:
\begin{equation}\label{1.18}
\begin{split}
\mu_K(\o)(f) = &\dis\int_{W^*_K \times \IR_+} f(s_K(w^*)_+, u) \,\o(dw^*,du), \;\mbox{for $\o \in \Omega$}\,,
\\
& \mbox{and $f$ non-negative measurable on $W_+ \times \IR_+$},
\end{split}
\end{equation}

\n
where for $w \in W$, $w_+ \in W_+$ denotes the restriction of $w$ to $\IN$, so that $s_K(w^*)_+$ starts at time $0$ where $w^* \in W^*_K$ enters $K$, and follows from then on $w^*$ step by step, as well as
\begin{equation}\label{1.19}
\begin{split}
\Theta_K (\mu) (f) = & \dis\int_{\{H_K < \infty\} \times \IR_+} f(\theta_{H_K}(w), u) \,\mu(dw,du), \;\mbox{for $\mu \in M$}\,,
\\[1ex]
&\mbox{and $f$ as in (\ref{1.18})}\,,
\end{split}
\end{equation}

\n
in other words, $\Theta_K(\mu) = \sum_{i \in I} \delta_{(\theta_{H_K}(w_i),u_i)} 1\{H_{K} (w_i)  < \infty\}$, when $\mu = \sum_{i \in I} \delta_{(w_i,u_i)} \in M$. Given  $K \subset \subset \IZ^d$, $u \ge 0$, we will also consider the measurable function on $\Omega$ with values in the set of finite point measures on $(W_+, \cW_+)$:
\begin{equation}\label{1.20}
\mu_{K,u}(\o)(dw) =  \mu_K(\o) (dw \times [0,u]), \;\mbox{for $\o \in \Omega$}\,.
\end{equation}

\n
We record for later use the straightforward identities valid for $K \subset K^\prime \subset\subset \IZ^d$:
\begin{equation}\label{1.21}
\begin{array}{ll}
{\rm i)} & \Theta_K \circ \mu_{K^\prime} \,= \mu_K\,,
\\[1ex]
{\rm ii)} & \Theta_K \circ \Theta_{K^\prime} = \Theta_K \,.
\end{array}
\end{equation}

\n
We are now going to construct the $\sigma$-finite measure $\nu$ on $(W^*, \cW^*)$ which enters the intensity of the Poisson point process we wish to define. For  $K \subset \subset \IZ^d$, we write $\cT_K$ for the countable set of finite nearest-neighbor trajectories starting and ending in the support of $e_K$:
\begin{equation}\label{1.22}
\begin{array}{ll}
\cT_K = \{ \tau = \big(\tau(n)\big)_{0 \le n \le N_\tau}; &N_\tau \ge 0, \,|\tau(n+1) - \tau(n)| = 1, \; \mbox{for}\; 0 \le n < N_\tau, 
\\[1ex]
&\mbox{and $\tau(0), \tau(N_\tau) \in {\rm Supp}\,e_K\}$}\,,
\end{array}
\end{equation}

\medskip\n
if $x \in {\rm Supp} \,e_K$, we also denote with $P^K_x$ the probability on $W_+$ governing the walk conditioned not to hit  $K$:
\begin{equation}\label{1.23}
P^K_x[\cdot] = P_x[\,\cdot \,|\wt{H}_K = \infty]\,.
\end{equation}
We are now ready to state

\begin{theorem}\label{theo1.1}
For $K \subset \subset \IZ^d$, denote with $Q_K$ the finite measure on $W$, supported on $W^0_K$, such that for any $A,B \in \cW_+$, $x \in \IZ^d$:
\begin{equation}\label{1.24}
Q_K\big[(X_{-n})_{n \ge 0} \in A, \,X_0 = x, \,(X_n)_{n \ge 0} \in B] = P_x^K[A] \,e_K(x) \,P_x[B]\,.
\end{equation}

\n
There is a unique $\sigma$-finite measure $\nu$ on $(W^*, \cW^*)$ such that:
\begin{equation}\label{1.25}
1_{W^*_K} \,\nu = \pi^* \circ Q_K, \;\mbox{for any} \;K \subset \subset \IZ^d\,.
\end{equation}

\medskip\n
Further letting $L_K(w) = \sup\{n \ge 0; \,X_n(w) \in K\}, \;w \in W^0_K$, stand for the time of the last visit to $K$ of $w$, the law under $Q_K$ of $(X_n)_{0 \le n \le L_K}$ is supported on $\cT_K$, and for $A,B \in \cW_+$, $\tau \in \cT_K$, one has:
\begin{align}
&Q_K \big[(X_{-n})_{n \ge 0} \in A, \,(X)_{0 \le n \le L_K} = \tau, \,(X_{n + L_K})_{n \ge 0} \in B\big] = \label{1.26}
\\[0.5ex]
&P^K_{\tau(0)}[A] \,e_K\big(\tau(0)\big) \,P_{\tau(0)} [X_n = \tau(n), \,0 \le n \le N_\tau] \,e_K\big(\tau(N_\tau)\big) \,P^K_{\tau(N_\tau)}[B]\,. \nonumber
\\[4ex]
&\mbox{$\nu$ is invariant under the time reversal involution on $W^*$},\label{1.27}
\\
& \mbox{$w^* \rightarrow \check{w}^*$, where $\check{w}^* = \pi^*(\check{w})$, with $\pi^*(w) = w^*$ and $\check{w}(n) = w(-n)$, for $n \in \IZ$}\,. \nonumber
\\[4ex]
&\mbox{$\nu$ is invariant under the translations on $W^*$:}\label{1.28}
\\
& \mbox{$w^* \rightarrow w^* + x$, $x \in \IZ^d$, where $w^* + x = \pi^*(w + x)$, with $\pi^*(w) = w^*$}\,. \nonumber
\end{align}
\end{theorem}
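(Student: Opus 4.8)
The plan is to construct $\nu$ by patching together the measures $\pi^* \circ Q_K$, $K \subset\subset \IZ^d$, read in the local charts $s_K$, after verifying they are mutually consistent; formula (\ref{1.26}) and the two invariances (\ref{1.27}), (\ref{1.28}) will then follow with little extra effort.

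\medskip\n
\textbf{Uniqueness and reduction to a consistency statement.} Fix an exhaustion $\IZ^d = \bigcup_n B_n$ by finite sets, say $B_n = B(0,n)$. Since every $w \in W$ enters $\{w(0)\}$, we have $W = \bigcup_n W_{B_n}$, hence $W^*_{B_n} \uparrow W^*$. Any $\nu$ satisfying (\ref{1.25}) has $1_{W^*_{B_n}} \nu = \pi^* \circ Q_{B_n}$ prescribed for every $n$, is therefore determined, and satisfies $\nu(W^*_{B_n}) = Q_{B_n}(W) = e_{B_n}(\IZ^d) = {\rm cap}(B_n) < \infty$, so it is $\sigma$-finite. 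For existence it suffices to establish the consistency relation
$$
1_{W^*_K}\, (\pi^* \circ Q_{K'}) = \pi^* \circ Q_K \qquad \mbox{whenever } K \subseteq K' \subset\subset \IZ^d ,
$$
for then one may set $\nu(A) := \sup_n (\pi^* \circ Q_{B_n})(A \cap W^*_{B_n})$, $A \in \cW^*$, and check, using the consistency relation, that $1_{W^*_K}\nu = \pi^* \circ Q_K$ for every finite $K$.

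\medskip\n
\textbf{The consistency relation (main step).} By (\ref{1.15}) the section $s_K$ identifies measures on $W^*_K$ with measures on $W^0_K$. Because $K \subseteq K'$, any $w \in W^0_{K'}$ with $\pi^*(w) \in W^*_K$ avoids $K$ at negative times, so its first entrance time $H_K(w)$ into $K$ is finite and non-negative, and re-rooting $w$ there produces $\phi_K(w) := \theta_{H_K(w)}(w) \in W^0_K$. In the chart $s_K$ the consistency relation becomes the identity of finite measures on $W^0_K$
$$
\big(\phi_K\big)_*\big(1_{\{H_K < \infty\}}\, Q_{K'}\big) = Q_K .
$$
To prove this I would substitute the explicit description (\ref{1.24}) of $Q_{K'}$: conditionally on $X_0 = x$ the past $(X_{-n})_{n \ge 0}$ is a walk from $x$ avoiding $K' \supseteq K$, the future $(X_n)_{n \ge 0}$ is $P_x$-distributed, and the two are independent; since the past never meets $K$, re-rooting is a shift by $\ell := H_K$ inside the future. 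By the strong Markov property at $H_K$ the new future is $P_y$-distributed, $y := X_{H_K} \in K$, and independent of the rest, so that matching the two sides reduces to matching, on cylinder events for the new past, a sum over $\ell$ and over $x$. Carrying this out — splitting according to whether the cylinder is shorter or longer than $\ell$, using the reversibility of simple random walk (all transition probabilities equal $(2d)^{-1}$) and the expansion of the Green function killed on $K$ — one is left exactly with the ``sweeping'' identity for equilibrium measures
$$
\sum_{x} e_{K'}(x)\, P_x[H_K < \infty,\, X_{H_K} = y] = e_K(y) , \qquad y \in K ,
$$
itself obtainable by letting the starting point go to infinity in $P_z[X_{H_K} = y, H_K < \infty] = \sum_x P_z[X_{H_{K'}} = x, H_{K'} < \infty]\, P_x[X_{H_K} = y, H_K < \infty]$ (valid since $H_{K'} \le H_K$) and using the description of $e_K, e_{K'}$ as limits of hitting distributions from far away, cf.~(\ref{1.8}). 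This identity together with the bookkeeping of the re-rooting is the only genuinely delicate point, and I expect the consistency relation to be the main obstacle of the theorem.

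\medskip\n
\textbf{The remaining assertions.} Formula (\ref{1.26}) is a statement about $Q_K$ alone: under (\ref{1.24}) the past avoids $K$ at positive times, so the last visit $L_K$ is attained inside the future part, and a last-exit decomposition of the $P_x$-walk at $L_K$, using $P_z[\wt H_K = \infty,\, \cdot\,] = e_K(z)\, P^K_z[\,\cdot\,]$ for $z \in K$ and $P_{\tau(0)}[X_n = \tau(n), 0 \le n \le N_\tau] = (2d)^{-N_\tau}$, yields (\ref{1.26}) and shows that its right-hand side is invariant under $(\tau, A, B) \mapsto (\check\tau, B, A)$. For (\ref{1.27}): the time-reversal involution on $W^*$ preserves each $W^*_K$, so the push-forward of $\nu$ by it again satisfies (\ref{1.25}) provided the push-forward of $Q_K$ under $w \mapsto \check w$ has the same image under $\pi^*$ as $Q_K$; reading this in the chart $s_K$ and invoking the symmetry of (\ref{1.26}) just noted gives it, and uniqueness forces time-reversal invariance. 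For (\ref{1.28}): translation by $x$ carries $W^*_{K-x}$ onto $W^*_K$ and, since $e_{K-x}(\cdot) = e_K(\cdot + x)$ and the laws $P_x$, $P^K_x$ are covariant under translations, pushes $Q_{K-x}$ to $Q_K$; hence the translate of $\nu$ satisfies (\ref{1.25}) and equals $\nu$ by uniqueness.
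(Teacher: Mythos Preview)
Your proposal is correct and follows essentially the same route as the paper: reduce existence to the consistency identity $(\phi_K)_*\big(1_{\{H_K<\infty\}}Q_{K'}\big)=Q_K$, verify it via reversibility and the strong Markov property, then derive (\ref{1.26}) by a last-exit decomposition and obtain (\ref{1.27}), (\ref{1.28}) from uniqueness. One small caution: the consistency computation does not literally reduce to the sweeping identity $\sum_x e_{K'}(x)P_x[H_K<\infty, X_{H_K}=y]=e_K(y)$ --- that is only the total-mass case --- but rather to the full statement that, after reversing the segment from $x$ to $y$, the concatenation ``reversed segment, then old past avoiding $K'$'' is exactly a $P_y$-walk avoiding $K$ decomposed at its last visit to $K'$; this is the heart of the paper's calculation and is what your ``bookkeeping'' must amount to.
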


\medskip
\begin{proof}
We begin with the proof of the existence and uniqueness of $\nu$. Since $W^* = \bigcup_{m \ge 0} \,W^*_{K_m}$, where $K_m \uparrow \IZ^d$, with $K_m$ finite, for $m \ge 0$, the uniqueness of $\nu$ satisfying (\ref{1.25}) is immediate. As for the existence of $\nu$, denote for $K \subset \subset \IZ^d$ with $\nu_K$ the finite measure supported on $W^*_K = \pi^*(W^0_K)$ in the right-hand side of (\ref{1.25}):
\begin{equation}\label{1.29}
\nu_K = \pi^* \circ Q_K\,.
\end{equation}

\n
The existence of $\nu$ will follow once we show that for $K \subset K^\prime \subset \subset \IZ^d$:
\begin{equation*}
1_{W^*_K} \,\nu_{K^\prime} = \nu_K\,.
\end{equation*}

\n
This in turn will follow once we prove that:
\begin{equation}\label{1.30}
(s_K \circ s_{K^\prime}^{-1}) \circ \big(1_{s_{K^\prime}(W^*_K)} \,Q_{K^\prime}\big) = Q_K\,,
\end{equation}

\medskip\n
where $s_{K^\prime}^{-1}$ denotes the restriction of $\pi^*$ to $W^0_{K^\prime}$. Indeed it simply suffices to take the image of both sides under $\pi^*$. We now write $s_{K^\prime}(W^*_K)$ as the at most countable partition into measurable sets:
\begin{equation}\label{1.31}
s_{K^\prime}(W^*_K) = \bigcup\limits_{\sigma \in \Sigma} \,W^0_{K^\prime,\sigma} \,,
\end{equation}

\n
where $\Sigma$ denotes the set of finite nearest-neighbor trajectories $\sigma = \big(\sigma(n)\big)_{0 \le n \le N_\sigma}$, with $\sigma(0) \in K^\prime$, $\sigma(n) \notin K$ for $n < N_\sigma$, and $\sigma(N_\sigma) \in K$, and
\begin{equation}\label{1.32}
W^0_{K^\prime,\sigma} = \{w \in W^0_{K^\prime}; \,X_n(w) = \sigma(n), \;\mbox{for} \;0 \le n \le N_\sigma\}\,.
\end{equation}
One then has the identity:
\begin{equation}\label{1.33}
s_K \circ s_{K^\prime}^{-1}(w) = w(\cdot + N_\sigma) = \theta_{N_\sigma}(w), \;\mbox{for} \;w \in W^0_{K^\prime,\sigma}\,.
\end{equation}

\medskip\n
As a result denoting with $Q$ the left-hand side of (\ref{1.30}), we find that
\begin{equation}\label{1.34}
Q = \dsl_{\sigma \in \Sigma} \,\theta_{N_\sigma} \circ \big(1_{W^0_{K^\prime,\sigma}} Q_{K^\prime}\big)\,.
\end{equation}

\medskip\n
Thus given an arbitrary collection $A_i$, $i \in \IZ$, of subsets of $\IZ^d$, we see that
\begin{equation}\label{1.35}
\begin{array}{l}
Q[X_i \in A_i, i \in \IZ] = \dsl_{\sigma \in \Sigma} \,Q_{K^\prime} [X_{i+N_\sigma} \in A_i, i \in \IZ, X_n = \sigma(n), 0 \le n \le N_\sigma]
\\[1ex]
= \dsl_{\sigma \in \Sigma} \,Q_{K^\prime} [X_i \in A_{i - N_\sigma}, i \in \IZ, X_n = \sigma(n), 0 \le n \le N_\sigma]
\\
\\[-1ex]
\hspace{-3ex} \stackrel{(\ref{1.6}), (\ref{1.24})}{=} \dsl_{\sigma \in \Sigma} \;\dsl_{x \in {\rm Supp}(e_{K^\prime})} \,P_x^{K^\prime} [X_m \in A_{-m - N_{\sigma}}, m \ge 0] \,P_x[\wt{H}_{K^\prime} = \infty]
\\
\\[-1ex]
\qquad \qquad P_x[X_n \in A_{n - N_\sigma}, n \ge 0, X_n = \sigma(n), 0 \le n \le N_\sigma]
\\[2ex]
\hspace{-3.5ex} \stackrel{(\ref{1.23}),{\rm Markov}}{=} \dsl_{\sigma \in \Sigma} \;\dsl_{x \in {\rm Supp} (e_{K^\prime})} \,P_x[X_m \in A_{-m - N_\sigma}, m \ge 0, \wt{H}_{K^\prime} = \infty]
\\
\\[-1ex]
\qquad \qquad \;\;\,P_x[X_n = \sigma(n) \in A_{n-N_\sigma}, 0 \le n \le N_\sigma] \,P_{\sigma(N_\sigma)} [X_n \in A_n, n \ge 0]\,.
\end{array}
\end{equation}

\medskip\n
It follows from the reversibility of the walk that for $y \in K$:
\begin{equation}\label{1.36}
\begin{array}{l}
\dsl_{\sigma: \sigma(N_\sigma) = y} \;\dsl_{x \in {\rm Supp}(e_{K^\prime})} P_x[X_m \in A_{-m - N_\sigma}, m \ge 0, \wt{H}_{K^\prime} = \infty]
\\
\\[-1ex]
P_x[X_n = \sigma(n) \in A_{n-N_\sigma}, 0 \le n \le N_\sigma]  = \dsl_{x \in {\rm Supp}(e_{K^\prime})} \; \dsl_{\sigma: \sigma(N_\sigma) = y \atop \sigma(0) = x}
\\[2ex]
P_x[X_{m} \in A_{-m - N_\sigma}, m \ge 0, \wt{H}_{K^\prime} = \infty]  
\\[1ex]
P_y[X_n = \sigma(N_\sigma - n) \in A_{-n}, 0 \le n \le N_\sigma] \stackrel{\rm Markov}{=}
\\[2ex]
\dsl_{x \in {\rm Supp}(e_{K^\prime})} \dsl_{\sigma: \sigma(N_\sigma) = y \atop \sigma(0) = x} P_y\big[X_n = \sigma(N_\sigma - n) \in A_{-n}, 0 \le n \le N_\sigma,   
\\[-2ex]
\qquad \qquad \qquad  \qquad \quad \;\,X_n \in A_{-n}, n \ge N_\sigma, \wt{H}_{K^\prime}\circ\theta_{N_\sigma} = \infty\big]
\\
\\[-1ex]
= \dsl_{x \in {\rm Supp}(e_{K^\prime})} P_y[\wt{H}_K = \infty,  \mbox{the last visit to $K^\prime$ occurs at $x$, and}
\\[-1.5ex]
\qquad \qquad \qquad  \quad \mbox{$X_n \in A_{-n}$, for $n \ge 0]$}
\\
\\[-1ex]
= P_y [\wt{H}_K = \infty, \,X_n \in A_{-n}, \,n \ge 0]\,.
\end{array}
\end{equation}

\medskip\n
Inserting this identity in the last line of (\ref{1.35}) we find that:
\begin{equation}\label{1.37}
\begin{array}{l}
Q[X_i \in A_i, i \in \IZ] = \dsl_{y \in K} \,P_y[\wt{H}_K = \infty, \, X_n \in A_{-n}, n \ge 0] \,P_y [X_n \in A_n, n \ge 0]
\\[2ex]
= \dsl_{y \in {\rm Supp}(e_K)} P_y^K [X_n \in A_{-n}, n \ge 0] \,e_K(y)\,P_y[X_n \in A_n, n \ge 0] 
\\
\\[-2ex]
\stackrel{(\ref{1.24})}{=} Q_K [X_n \in A_n, n \in \IZ]\,.
\end{array}
\end{equation}

\medskip\n
This proves that (\ref{1.30}) holds and thus concludes the proof of the existence of $\nu$ satisfying (\ref{1.25}), which is automatically $\sigma$-finite.

\medskip
We now turn to the proof of (\ref{1.26}). We consider $\tau(n), 0 \le n \le N$, some finite sequence in $\IZ^d$. Observe that  $Q_K [(X_n)_{0 \le n \le L_K}  = \tau]$ vanishes unless $\tau$ is nearest neighbor and $\tau(N) \in K$. Moreover when this is the case it follows from the use of the Markov property at time $N$ that:
\begin{equation}\label{1.38}
\begin{array}{l}
Q_K [(X_n)_{0 \le n \le L_K} = \tau] = Q_K [X_n = \tau(n), 0 \le n \le N, \wt{H}_K \circ \theta_N = \infty] 
\\[1ex]
\underset{\rm Markov}{\stackrel{(\ref{1.24}), (\ref{1.6})}{=}}  e_K\big(\tau(0)\big) \,P_{\tau(0)} [X_n = \tau(n), \,0 \le n \le N] \,e_K\big(\tau(N)\big)\,.
\end{array}
\end{equation}

\n
This shows that the law of $(X_n)_{0 \le n \le L_K}$ under $Q_K$ is supported by $\cT_K$. Also repeating the argument which yielded (\ref{1.38}), we see that for $A,B \in \cW_+$, $\tau \in \cT_K$ the left hand side of (\ref{1.26}) equals, (writing $N$ in place of $N_\tau$ for simplicity):
\begin{equation*}
\begin{array}{l}
Q_K\big[(X_{-n})_{n \ge 0} \in A, \;X_n = \tau(n), \,0 \le n \le N, \;\theta_N^{-1}\big(\{\wt{H}_K = \infty\} \cap \{X_n \in B, n \ge 0\}\big)\big] \stackrel{(\ref{1.24})}{=}
\\[1ex]
P^K_{\tau(0)}[A] \,e_K\big(\tau(0)\big)\,P_{\tau(0)} \big[X_n = \tau(n), \,0 \le n \le N, \,\theta_N^{-1}\big(\{\wt{H}_K = \infty\} \cap \{X_n \in B, n \ge 0\}\big)\big] 
\\[1ex]
\underset{(\ref{1.6}),(\ref{1.23})}{\stackrel{\rm Markov}{=}} P^K_{\tau(0)} [A]\,e_K\big(\tau(0)\big)\,P_{\tau(0)} [X_n = \tau(n), \,0\le n \le N] \,e_K\big(\tau(N)\big)\,P^K_{\tau(N)}[B]\,,
\end{array}
\end{equation*}

\n
and this proves (\ref{1.26}).

\medskip
To prove (\ref{1.27}), observe that for $K \subset \subset \IZ^d$, $w^* \rightarrow \check{w}^*$ leaves $W^*_K$ invariant and $X_n \big(s_K (\check{w}^*)\big) = X_{L_K - n} \big(s_K(w^*)\big)$, for $n \in \IZ$, $w^* \in W^*_K$. Denoting $\check{\gamma}$ the image under $w^* \rightarrow \check{w}^*$ of a measure $\gamma$ on $W^*$, we find for $C \in \cW$
\begin{equation}\label{1.39}
\begin{array}{lcl}
s_K \circ(1_{W^*_K} \check{\nu}) (C) &  \hspace{-1.5ex} = & \hspace{-1.5ex}s_K \circ \big((1_{W^*_K} \nu)\check{~}\big)(C) = s_K \circ  (1_{W^*_K} \nu)\big((X_{L_{K - \point}}) \in C\big)
\\[1ex]
& \hspace{-1.5ex} \stackrel{(\ref{1.25})}{=} &  \hspace{-1.5ex} Q_K \big((X_{L_{K - \point}}) \in C\big)\,.
\end{array}
\end{equation}

\medskip\n
Hence with (\ref{1.26}), $A,B \in \cW_+$, $\tau \in \cT_K$, and $C$ denoting the event in the probability in the first line of (\ref{1.26}), and writing $N$ in place of $N_\tau$ for simplicity we find that
\begin{equation}\label{1.40}
\begin{array}{l}
s_K \circ (1_{W^*_K} \,\check{\nu}) (C) = Q_K \big[(X_{-n})_{n \ge 0} \in B, 
\\
(X_n)_{0 \le n \le L_K} = \tau(N - \point), (X_{n + L_K})_{n \ge 0} \in A\big]  \stackrel{(\ref{1.26})}{=} P^K_{\tau(N)} [B] \,e_K \big(\tau(N)\big)
\\[1ex]
P_{\tau(N)} [(X_n)_{0 \le n \le N} = \tau(N-n)] \,e_K\big(\tau (0)\big)\,P^K_{\tau(0)}[A] \stackrel{\rm reversibility}{=} P^K_{\tau(0)} [A] \,e_K\big(\tau(0)\big)
\\[1.5ex]
P_{\tau(0)} [X_n = \tau(n), 0 \le n \le N] \,e_K \big(\tau(N)\big) \,P^K_{\tau(N)} [B] = Q_K(C) \stackrel{(\ref{1.25})}{=} 
\\[1.5ex]
s_K \circ (1_{W^*_K} \nu)(C)\,.
\end{array}
\end{equation}

\medskip\n
It now readily follows that $s_K \circ (1_{W^*_K} \check{\nu}) = s_K \circ (1_{W^*_K} \nu)$ and hence $1_{W^*_K} \check{\nu} = 1_{W^*_K} \nu$ for any $K \subset \subset \IZ^d$, whence $\check{\nu} = \nu$. This proves (\ref{1.27}).

\medskip\
Finally for the proof of (\ref{1.28}), we note that for $x \in \IZ^d$, $K \subset \subset \IZ^d$, $w^* \rightarrow w^* + x$ maps $W^*_K$ one-to-one onto $W^*_{K + x}$, and $s_K(w^* + x) = s_{K - x}(w^*) + x$, for $w^* \in W^*_{K - x}$. Denoting with $\gamma^x$ the image under $w^* \rightarrow w^* + x$ of a measure $\gamma$ on $W^*$, we see that for $C \in \cW$, we have
\begin{equation}\label{1.41}
\begin{array}{lcl}
s_K \circ(1_{W^*_K}\nu^x) (C) &  \hspace{-1.5ex} = & \hspace{-1.5ex} s_K \circ \big((1_{W^*_{K-x}} \nu)^x\big)(C) = s_{K-x}  \circ (1_{W^*_{K-x}} \nu)\big((X_n) + x \in C\big)
\\[1ex]
& \hspace{-1.5ex} \stackrel{(\ref{1.25})}{=} &  \hspace{-1.5ex} Q_{K-x} \big((X_n) + x \in C\big)\,.
\end{array}
\end{equation}

\n
Hence with $A,B \in \cW_+$, $y \in \IZ^d$ and $C$ denoting the event in the left-hand side of  (\ref{1.24}), where $x$ is replaced by $y$, we find that:
\begin{equation*}
\begin{array}{l}
s_K \circ (1_{W^*} \nu^x) (C) = Q_{K - x} \big[(X_{-n})_{n \ge 0} \in A-x,\,X_0 = y-x, \,(X_n)_{n \ge 0} \in B-x\big] \stackrel{(\ref{1.24})}{=}
\\[1ex]
P^{K - x}_{y-x}[A-x] \,e_{K - x}(y-x) \,P_{y - x}[B-x] = Q_K[C] \stackrel{(\ref{1.25})}{=} s_K \circ (1_{W^*_K} \nu) (C)\,,
\end{array}
\end{equation*}

\medskip\n
using (\ref{1.24}) and translation invariance in the third equality. This readily implies that $\nu^x = \nu$ and concludes the proof of Theorem \ref{theo1.1}.
\end{proof}

\begin{remark}\label{rem1.2} ~ \rm

\medskip\n
1) Let us say a few words on why the quotient space $W^*$ is better suited for our purpose than $W$. One can of course use the sections $s_K$, with $K$ growing along an increasing sequence of finite sets exhausting $\IZ^d$ to construct ``by patching'' a $\sigma$-finite measure on $(W,\cW)$ projecting down to $\nu$ under $\pi^*$. However there is no measure on $(W,\cW)$ invariant under translation of trajectories by constant vectors projecting down to $\nu$. Indeed if such a measure $\rho$ existed then for any $K \subset \subset \IZ^d$ we would have
\begin{equation*}
{\rm cap}(K) = \nu(W^*_K) = \rho(W_K) \ge \rho(X_0 \in K) = \rho(X_0 = 0) \,|K|\,,
\end{equation*}

\n
using translation invariance in the last equality. However capacity grows slower than volume when $K$ is of 
the form $B(0,L)$, with $L$ tending to infinity, cf.~below (\ref{3.24}). This would imply that $\rho(X_0 = 0) = 0$ 
and hence $\rho = 0$, due to translation invariance, thus leading to a contradiction. More obstructions can 
be brought to light, which make measures on $(W,\cW)$ projecting down to $\nu$ definitely less natural than $\nu$.

\bigskip\n
2) The expression in the right-hand member of (\ref{1.38}) when $K = B(0,L)$ coincides up to a normalization factor with the expression (3.13) of \cite{BenjSzni06} governing the limit law of certain properly recentered excursions of simple random walk on $(\IZ/N\IZ)^d$ to a box of side-length $2L$, see Theorem 3.1 of \cite{BenjSzni06}. This limiting result played a key role in the control of fluctuations of certain averages, cf.~(4.43) and Proposition 4.2. of \cite{BenjSzni06}. \hfill $\square$
\end{remark}

We will now endow the space $(\Omega, \cA)$, cf.~(\ref{1.16}), with a probability measure and thereby complete the construction of the basic model of interlacements. To this end we note that the infinite measure $\nu(dw^*) du$ on $W^* \times \IR_+$ gives finite mass to the sets $W^*_K \times [0,u]$, for $K \subset \subset \IZ^d$ and $u \ge 0$. We can thus construct on $(\Omega, \cA)$ the law $\P$ of a Poisson point measure with intensity $\nu(dw^*) du$. We denote with $\E [\cdot ]$ the corresponding expectation. The law $\IP$ is for instance characterized by the fact that, cf. \cite{Resn87}, p.~129,
\begin{equation}\label{1.42}
\begin{array}{l}
\IE\Big[\exp \Big\{ - \dis\int_{W^* \times \IR_+} \,f \,\o(dw^*, du)\Big\}\Big] = \exp\Big\{- \dis\int_{W^* \times \IR_+} (1 - e^{-f}) \,\nu(dw^*) du\Big\}\,,
\\
\\[-2ex]
\mbox{for any non-negative $\cW^* \otimes \cB(\IR_+)$-measurable function $f$}\,.
\end{array}
\end{equation}

\medskip\n
In a similar fashion we can also realize on $(M,\cM)$, cf.~(\ref{1.17}), the law of the Poisson point measure on $W_+ \times \IR_+$ with intensity $P_{e_K}(dw) du$, when $K \subset \subset \IZ^d$. We denote it with $\IP_K$ and write $\IE_K[\cdot]$ for the corresponding expectation. It is characterized by the fact that:
\begin{equation}\label{1.43}
\begin{array}{l}
\IE_K\Big[\exp \Big\{ - \dis\int_{W_+ \times \IR_+} \,f \,\mu(dw,du)\Big\}\Big] = \exp\Big\{- \dis\int_{W_+ \times \IR_+} (1 - e^{-f}) \,P_{e_K}(dw) du\Big\}\,,
\\
\\[-2ex]
\mbox{for any non-negative $\cW_+ \otimes \cB(\IR_+)$-measurable function $f$}\,.
\end{array}
\end{equation}

\medskip\n
We will now collect some straightforward properties of the laws $\IP$ and $\IP_K$. Given $\o = \sum_{i \ge 0} \delta_{(w^*_i,u_i)}$, we write 
\begin{equation}\label{1.44}
\begin{split}
\check{\o} & = \dsl_{i \ge 0} \delta_{(\check{w}_{i^*},u_i)} \in \Omega\,,
\\[1ex]
\tau_x \,\o & = \dsl_{i \ge 0} \delta_{(w^*_i - x,u_i)} \in \Omega, \;\mbox{for $x \in \IZ^d$} \,.
\end{split}
\end{equation}

\medskip\n
We also recall the notation from (\ref{1.18}), (\ref{1.19}).

\begin{proposition}\label{prop1.3} $(K \subset K^\prime \subset \subset \IZ^d)$
\begin{align}
&\mbox{$\IP_K$ is the law of $\mu_K$ under $\P$}\,. \label{1.45}
\\[1ex]
& \Theta_K \circ \IP_{K^\prime} = \IP_K\,.\label{1.46}
\\[1ex]
&\mbox{$\IP$ is invariant under $\o \rightarrow \check{\o}$, (time-reversal invariance)}\,.\label{1.47}
\\[1ex]
&\mbox{$\IP$ is invariant under $\tau_x$ for any $x \in \IZ^d$, (translation invariance)}\,.\label{1.48}
\end{align}
\end{proposition}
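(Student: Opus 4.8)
The plan is to verify each of the four assertions \eqref{1.45}--\eqref{1.48} by checking equality of Laplace functionals, using the characterizations \eqref{1.42}, \eqref{1.43} of the relevant Poisson laws together with the structural facts about $\nu$ already established in Theorem \ref{theo1.1}. Recall that the pushforward of a Poisson point process under a measurable map is again Poisson, with intensity the pushforward of the original intensity (possibly restricted to the preimage on which the map is defined); this is the single general fact I will invoke repeatedly.

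For \eqref{1.45}: the map $\o \mapsto \mu_K(\o)$ from \eqref{1.18} keeps only the atoms with $w_i^* \in W^*_K$ and transports them via $w^* \mapsto s_K(w^*)_+$ and $u\mapsto u$. Hence $\mu_K$ under $\P$ is a Poisson point process on $W_+\times\IR_+$ whose intensity is the image of $1_{W^*_K}\,\nu(dw^*)\,du$ under $(w^*,u)\mapsto (s_K(w^*)_+,u)$. By \eqref{1.25} this equals the image of $Q_K$ under $w\mapsto w_+$, tensored with $du$. By the defining property \eqref{1.24} of $Q_K$, the forward part $(X_n)_{n\ge0}$ has conditional law $P_x$ given $X_0=x$, and $X_0$ has law $e_K$; that is exactly $P_{e_K}=\sum_x e_K(x)P_x$. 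So the intensity is $P_{e_K}(dw)\,du$, which by \eqref{1.43} identifies the law of $\mu_K$ as $\IP_K$. For \eqref{1.46}: this is purely a consequence of \eqref{1.45} and the compatibility identity \eqref{1.21}i), namely $\Theta_K\circ\mu_{K'}=\mu_K$; pushing $\P$ forward through both sides and using \eqref{1.45} for $K$ and $K'$ gives $\Theta_K\circ\IP_{K'}=\Theta_K\circ(\mu_{K'}$-law$)=\mu_K$-law$=\IP_K$. (One may alternatively re-derive it directly from \eqref{1.43} by computing the Laplace functional of $\Theta_K(\mu)$ and using the strong Markov property at $H_K$ together with $P_{e_{K'}}[H_K<\infty,\,\theta_{H_K}(X_\cdot)\in\,\cdot\,]=P_{e_K}$, which is the standard ``last-exit'' / sweeping identity for equilibrium measures.)

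For \eqref{1.47} and \eqref{1.48}: the involution $\o\mapsto\check\o$ of \eqref{1.44} is the pushforward of $\o$ by the time-reversal map $w^*\mapsto\check w^*$ on $W^*$ (applied atomwise, fixing the $u$-coordinates), so $\check\o$ under $\P$ is Poisson with intensity $\check\nu(dw^*)\,du$, where $\check\nu$ is the image of $\nu$ under $w^*\mapsto\check w^*$. But \eqref{1.27} says $\check\nu=\nu$, hence the intensity is unchanged and, by \eqref{1.42}, so is the law of $\o$. Identically, $\tau_x\o$ is the pushforward of $\o$ by $w^*\mapsto w^*-x$ (atomwise, $u$ fixed), so its intensity under $\P$ is the image of $\nu$ under $w^*\mapsto w^*-x$ tensored with $du$; by the translation invariance \eqref{1.28} of $\nu$ this is again $\nu(dw^*)\,du$, and \eqref{1.42} yields $\tau_x\circ\P=\P$. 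In all four cases the argument is: express the transformation of $\o$ as an atomwise pushforward, push the intensity through, invoke the corresponding property of $\nu$ (or of $Q_K$), and read off equality of laws from the Laplace-functional characterization.

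I do not expect a serious obstacle here; the content has effectively been front-loaded into Theorem \ref{theo1.1} and the identities \eqref{1.21}. The only point requiring a little care is bookkeeping for \eqref{1.45}: one must note that $\mu_K$ is only defined on atoms in $W^*_K\times\IR_+$, so the relevant intensity is the \emph{restricted} measure $1_{W^*_K}\nu\otimes du$, and then check that the image of $Q_K$ under $w\mapsto w_+$ is genuinely $P_{e_K}$ and not something that remembers the backward trajectory --- which is immediate from the product form in \eqref{1.24}. A secondary point, if one chooses the direct route for \eqref{1.46} rather than deducing it from \eqref{1.21}i), is to justify the last-exit decomposition $\Theta_K\circ\IP_{K'}=\IP_K$ at the level of intensities, i.e.\ $\int P_x[H_K<\infty,\,\theta_{H_K}\in A]\,e_{K'}(dx)=P_{e_K}[A]$; but since \eqref{1.21}i) is already recorded, invoking it is cleaner.
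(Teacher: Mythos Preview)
Your proposal is correct and follows essentially the same route as the paper: (\ref{1.45}) by pushing the Poisson intensity through $(w^*,u)\mapsto(s_K(w^*)_+,u)$ and identifying it as $P_{e_K}(dw)\,du$ via (\ref{1.25}) and (\ref{1.24}); (\ref{1.46}) from (\ref{1.21})\,i) together with (\ref{1.45}); and (\ref{1.47}), (\ref{1.48}) directly from the invariance properties (\ref{1.27}), (\ref{1.28}) of $\nu$. The paper's proof is simply a terser version of what you wrote.
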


\begin{proof}
We begin with (\ref{1.45}), and note that $\mu_K$ due to (\ref{1.18}) is distributed as a Poisson point process on $W_+ \times \IR_+$ with intensity measure $\gamma(dw \,du)$ such that for $f$ as in (\ref{1.18})
\begin{equation}\label{1.49}
\begin{array}{l}
\dis\int_{W_+ \times \IR_+} f \,\gamma(dw, du) = \dis\int_{W^*_K \times \IR_+} f(s_K(w^*)_+, u) \;\nu(dw^*) du \stackrel{(\ref{1.25}), (\ref{1.24})}{=} 
\\
\\[-1ex]
\dis\int_{W_+ \times \IR_+} f(w,u)\,P_{e_K} (dw) du\,.
\end{array}
\end{equation}

\n
This shows that the law of $\mu_K$ coincides with $\IP_K$. Then (\ref{1.46}) immediately follows from (\ref{1.21}) i) and (\ref{1.45}), whereas (\ref{1.47}), (\ref{1.48}) respectively follow from (\ref{1.27}), (\ref{1.28}).
\end{proof}

\begin{remark}\label{rem1.4}
\rm The constructions we have made here in the case of simple random walk on $\IZ^d$, $d \ge 3$, can be straightforwardly generalized to the case of an infinite locally finite connected graph $\Gamma = (G,\cE)$ with vertex set $G$ and (undirected) edge set $\cE$, endowed with positive weights
\begin{equation}\label{1.50}
\lambda(e) > 0, \;e \in \cE\,,
\end{equation}

\n
so that the corresponding nearest neighbor walk on $G$ with transition probability
\begin{equation}\label{1.51}
\begin{split}
p_{x,y} & = \dis\frac{\lambda(\{x,y\})}{\sum\limits_{z: \{x,z\} \in \cE} \lambda(\{x,z\})}, \;\mbox{if} \;\{x,y\} \in \cE\,,
\\[1ex]
& = 0, \;\mbox{otherwise}\,.
\end{split}
\end{equation}
is transient. This walk is reversible with respect to the measure
\begin{equation}
\lambda_x = \dsl_{y: \{x,y\} \in \cE} \lambda(\{x,y\}), \;x \in G.
\end{equation}

\n
In this set-up some of our definitions need to be modified. For instance if $P_x$ stands for the law of the walk starting from $x \in G$, one divides the right-hand side of (\ref{1.5}) by $\lambda_y$, and multiplies the right-hand side of (\ref{1.6}) by $\lambda_x$, cf.~\cite{Telc06}.

\medskip
The results we stated in Theorem \ref{theo1.1} and Proposition \ref{prop1.3}, except for (\ref{1.28}), (\ref{1.48}), which explicitly refer to the additive structure of $\IZ^d$ can easily be extended to this set-up. We refrain from doing this here since the main results of this article will pertain to percolation properties of the vacant set, which we introduce below, and rely on the structure of $\IZ^d$. \hfill $\square$
\end{remark}

We can now define for $\o \in \Omega$, the {\it interlacement at level $u$}, as the subset of $\IZ^d$:
\begin{equation}\label{1.53}
\begin{array}{lcl}
\cI^u(\o) &\hspace{-1.5ex} = &\hspace{-1.5ex}\bigcup\limits_{u_i \le u} {\rm range}(w_i^*), \;\mbox{if} \;\o = \dsl_{i \ge 0} \delta_{(w_i^*,u_i)} \in \Omega, \;u \ge 0\,,
\\[1ex]
&\hspace{-1.5ex} \stackrel{(\ref{1.20})}{=} &\hspace{-1.5ex} \bigcup\limits_{K \subset \subset \IZ^d} \;\bigcup\limits_{w \in {\rm Supp}\, \mu_{K,u}(\o)} w(\IN)\,,
\end{array}
\end{equation}

\n
where for $w^* \in W^*$, range$(w^*) = w(\IZ)$, for any $w \in W$ with $\pi^*(w) = w^*$. Note that in view of (\ref{1.18}), (\ref{1.20}), the following identity holds:
\begin{equation}\label{1.54}
\cI^u(\o) \cap K = \bigcup\limits_{w \in {\rm Supp}\, \mu_{K^\prime,u}(\o)} w(\IN) \cap K, \;\mbox{for any} \;K \subset K^\prime \subset \subset \IZ^d\,.
\end{equation}

\medskip\n
The {\it vacant set at level $u$} is then defined as 
\begin{equation}\label{1.55}
\cV^u(\o) = \IZ^d \backslash \cI^u(\o), \;\o \in \Omega, \,u \ge 0\,.
\end{equation}

\medskip\n
Obviously with (\ref{1.53}), (\ref{1.55}), $\cI^u(\o)$ increases with $u$, whereas $\cV^u(\o)$ decreases with $u$. In the next proposition we collect some simple properties of these random subsets. Given $K, \wt{K} \subset \subset \IZ^d$, we say that $\wt{K}$ separates $K$ from infinity when any nearest neighbor path starting in $K$ and tending to infinity enters $\wt{K}$.

\begin{proposition}\label{prop1.5} $(u \ge 0, K, \wt{K}\ss \IZ^d)$
\begin{align}
&\cI^u(\o) \cap K \not= \emptyset \Longleftrightarrow \mu_{K,u}(\o) \not= 0, \;\mbox{for}\; \o \in \Omega\,, \label{1.56}
\\
&\mbox{and $\cI^u, \cV^u$ depend measurably on $\o$}\,. \nonumber
\\[2ex]
&\IP[K \subseteq \cV^u] = \exp\{ - u \,{\rm cap} (K)\}\,. \label{1.57}
\\[2ex]
&\IP[x \in \cV^u] = \exp\Big\{- \dis\frac{u}{g(0)}\Big\}, \;\mbox{for $x \in \IZ^d$} \,.\label{1.58}
\\[1ex]
&\IP[\{x,y\} \subseteq \cV^u] = \exp\Big\{ - \dis\frac{2u}{g(0) + g(y-x)}\Big\}, \;\mbox{for $x,y \in \IZ^d$}\,.\label{1.59}
\end{align}

\medskip\n
If $\wt{K}$ separates $K$ from infinity then the following inclusion holds
\begin{equation}\label{1.60a}
\{\cV^u \supseteq \wt{K}\} \subseteq \{\cV^u \supseteq K\}, \;\mbox{(screening effect)} \,.
\end{equation}
\end{proposition}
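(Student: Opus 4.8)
The plan is to establish the inclusion (\ref{1.60a}) by proving its contrapositive at the level of events, that is, $\{\cV^u \not\supseteq K\} \subseteq \{\cV^u \not\supseteq \wt K\}$, equivalently $\cI^u \cap K \neq \emptyset \Rightarrow \cI^u \cap \wt K \neq \emptyset$. The measurability of all the events in sight is already granted by (\ref{1.56}), so nothing further is needed on that front.

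First I would fix $\o = \sum_{i \ge 0} \delta_{(w_i^*,u_i)} \in \Omega$ with $\cI^u(\o) \cap K \neq \emptyset$. By the definition (\ref{1.53}) of the interlacement at level $u$, this means there is an index $i$ with $u_i \le u$ and ${\rm range}(w_i^*) \cap K \neq \emptyset$. I would then pick a representative $w \in W$ with $\pi^*(w) = w_i^*$ together with an integer $m$ such that $w(m) \in K$.

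The key geometric point is that $w$, being a trajectory in $W$, satisfies $\lim_{|n| \to \infty} |w(n)| = \infty$; in particular the forward half $n \mapsto w(m+n)$, $n \ge 0$, is a nearest-neighbor path that starts inside $K$ and tends to infinity. Since by hypothesis $\wt K$ separates $K$ from infinity, this path must enter $\wt K$, i.e. $w(m + n_0) \in \wt K$ for some $n_0 \ge 0$. Hence ${\rm range}(w_i^*) = w(\IZ)$ meets $\wt K$, so $\cI^u(\o) \cap \wt K \neq \emptyset$, that is $\o \notin \{\cV^u \supseteq \wt K\}$. This proves the contrapositive, and hence (\ref{1.60a}).

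I do not anticipate a genuine obstacle here: the statement is a direct consequence of the doubly-infinite, transient nature of the paths underlying the interlacement together with the separation hypothesis. The only minor care needed is to use the correct (forward) half of a representative, so that the ``tending to infinity'' property applies, and to note that the argument is independent of the choice of representative because it is phrased in terms of ${\rm range}(w_i^*)$, which is representative-independent by definition.
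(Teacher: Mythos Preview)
Your proof of (\ref{1.60a}) is correct and follows essentially the same route as the paper: both argue the contrapositive by showing that any doubly-infinite trajectory entering $K$ must also enter $\wt K$, since its forward tail from a point of $K$ is a nearest-neighbor path tending to infinity and hence must cross the separating set. The paper phrases this as the inclusion $W^*_K \subseteq W^*_{\wt K}$ and then invokes (\ref{1.56}), whereas you unfold the definition (\ref{1.53}) directly, but the underlying argument is identical.
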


\medskip
\begin{proof}
The claim (\ref{1.56}) immediately follows from (\ref{1.54}) when $K^\prime =  K$, and (\ref{1.18}), (\ref{1.20}). The measurability of the sets $\cI^u$, $\cV^u$ (understood as the measurability of the maps $1\{x \in \cI^u\}$ and $1\{x \in \cV^u\}$ for all $x \in \IZ^d$) is a direct consequence of the above statement. With (\ref{1.56}), we thus see that
\begin{equation}\label{1.61}
\IP[\cV^u \supseteq K] = \IP[\mu_{K,u} = 0] \stackrel{(\ref{1.20}), (\ref{1.43})}{=} \exp \{- u \,P_{e_K} (W_+)\} \stackrel{(\ref{1.7})}{=} \exp\{- u \,{\rm cap} (K)\}\,,
\end{equation}

\medskip\n
and this proves (\ref{1.57}). As a result of (\ref{1.6}) or (\ref{1.8}) one finds that
\begin{equation}\label{1.62}
{\rm cap}(\{x\}) = g(0)^{-1}, \;\mbox{for $x \in \IZ^d$}\,,
\end{equation}

\medskip\n
and (\ref{1.58}) follows from (\ref{1.57}). As for (\ref{1.59}), we can assume without loss of generality that $x \not= y$, and note that for suitable $\rho_x,\rho_y > 0$, one has
\begin{equation}\label{1.63}
e_{\{x,y\}} = \rho_x \delta_x + \rho_y \delta_y, \quad {\rm cap}(\{x,y\}) = \rho_x + \rho_y\,,
\end{equation}

\medskip\n
so that with (\ref{1.8}) one finds:
\begin{equation*}
g(z,x) \,\rho_x + g(z,y) \,\rho_y = 1, \;\mbox{for $z = x,y$}\,.
\end{equation*}
Solving this system of equations we see that $\rho_x = \rho_y = \big(g(0) + g(y - x)\big)^{-1}$, and hence
\begin{equation}\label{1.64}
{\rm cap}(\{x,y\}) = \dis\frac{2}{g(0) + g(y-x)}, \;\mbox{for $x,y \in \IZ^d$}\,.
\end{equation}

\medskip\n
The claim (\ref{1.59}) now follows from (\ref{1.57}). 

\medskip
Finally note that when $\wt{K}$ separates $K$ from infinity, $w^* \in W^*_K \Longrightarrow w^* \in W^*_{\wt{K}}$, and with (\ref{1.56}) we see that $\cI^u(\o) \cap K \not= \emptyset \Longrightarrow \cI^u(\o) \cap \wt{K} \not= \emptyset$, whence (\ref{1.60a}). 
\end{proof}

\begin{remark}\label{rem1.6}  ~ \rm

\medskip\n
1) Using estimates on the  capacity of a large cube, cf.~for instance (2.4) in Lemma 2.2 of \cite{BoltDeus93} and \cite{Spit01}, p.~341, one sees that for $u \ge 0$,
\begin{equation}\label{1.65a}
\IP[\cV^u \supseteq B(0,L)] = \exp\big\{ - cu\,L^{d-2}\big(1 + o(1)\big)\big\}, \;\mbox{as $L \r \infty$}\,.
\end{equation}

\medskip\n
In particular there is no general exponential decay with $|A|$ of $\IP[\cV^u \supseteq A]$. This feature is drastically different from what happens for Bernoulli site percolation, see \cite{Grim99}. It creates very serious difficulties when trying to prove that for large $u$, $\cV^u$ does not percolate, see Section 3. Also (\ref{1.65a}) can be compared with (4.58), (4.62) of Benjamini-Sznitman \cite{BenjSzni06}, in the case of the vacant set left by simple random walk on $(\IZ / N\IZ)^d$ up to time $[u N^d]$.

\medskip
Incidentally in spite of the fact that $\cV^u$ displays a tendency to contain bigger boxes than Bernoulli site percolation, no matter how small $u > 0$, the law $Q_u$ of $1\{x \in \cV^u\}$, $x \in \IZ^d$, on $\{0,1\}^{\IZ^d}$, does not stochastically dominate Bernoulli site percolation with parameter close to $1$. Indeed the complement $\cI^u$ of $\cV^u$ always percolates.

\bigskip\n
2) As a direct consequence of (\ref{1.57}), and the inequality ${\rm cap}(K \cup K^\prime) \le {\rm cap}(K) + {\rm cap}(K^\prime)$, we see that 
\begin{equation}\label{1.66a}
\IP[K \cup K^\prime \subseteq \cV^u] \ge \IP [K \subseteq \cV^u]\, \IP [K^\prime \subseteq \cV^u], \; \mbox{for}\; K, K^\prime \subset \subset \IZ^d, u \ge 0\,,
\end{equation}

\n
i.e. the events $\{K \subseteq \cV^u\}$, $\{K^\prime \subseteq \cV^u\}$ are positively correlated. However we do not know whether the FKG inequality holds under the law $Q_u$ mentioned in 1).

\bigskip\n
3) As a direct consequence of (\ref{1.54}) and (\ref{1.26}), for $K \subset \subset \IZ^d$ we can visualize $\cI^u \cap K$ as the trace left on $K$ by a Poisson point process of finite trajectories belonging to the space $\cT_K$ of (\ref{1.22}). More precisely for any $u \ge 0$,
\begin{equation}\label{1.67}
\begin{array}{l}
\mbox{$\cI^u \cap K$ has the same distribution under $\IP$ as the trace on $K$ of a}
\\[0.5ex]
\mbox{Poisson point process of trajectories on $\cT_K$ with intensity measure}
\\[0.5ex]
\rho^u_K(\tau) = u \,e_K\, \tau(0)\,P_{\tau(0)} [X_n = \tau(n), 0 \le n \le N_\tau] \,e_K \big(\tau(N_\tau)\big), \;\mbox{for} \;\tau \in \cT_K\,.
\end{array}
\end{equation}

\bigskip\n
This has  a very similar flavor to some of the results in Section 3 and 4 of \cite{BenjSzni06}.

\bigskip\n
4) With standard estimates on the behavior of $g(\cdot)$ at infinity, cf.~\cite{Lawl91}, p.~31, one sees that for any $u \ge 0$,
\begin{equation}\label{1.67a}
\begin{split}
{\rm cov}_\IP \big(1_{\{x \in \cV^u\}}, 1_{\{y \in \cV^u\}})  & \sim \dis\frac{2u}{g(0)^2} \;g(y-x) \,e^{-\frac{2 u}{g(0)}}
\\[1ex]
& \sim \dis\frac{cu}{|y - x|^{d-2}} \;e^{-cu}, \;\mbox{as $|y - x| \r \infty$}\,,
\end{split}
\end{equation}

\medskip\n
where ${\rm cov}_\IP$ denotes the covariance under $\IP$. This displays the presence of long range correlations in the random set $\cV^u$. 

\bigskip\n
5) Formulas (\ref{1.58}), (\ref{1.59}) are in essence (2.26)  and (3.6) in Brummelhuis-Hilhorst \cite{BrumHilh91}, concerning the large $N$ behavior of the probability that one or two given points in $(\IZ/N\IZ)^d$ are not visited by simple random walk up to time $t = [uN^d]$. The prefactors present in formulas (2.26), (3.6) of \cite{BrumHilh91} stem from the fact that the walk under consideration starts at the origin and not with the uniform distribution as in \cite{BenjSzni06}. For a similar interpretation of (\ref{1.58}) see also Aldous-Fill \cite{AldoFill99} Chapter 3, Proposition 20, and Chapter 13, Proposition 8. One can also compare (\ref{1.57}) with Propositions 20 and 37 in Chapter 3 of \cite{AldoFill99}.

\hfill $\square$
\end{remark}

 \section{A zero-one law and an exponential bound}
 \setcounter{equation}{0}
 
 In this section we exploit the translation invariance of the basic model in a more substantial way. We prove that the probability that $\cV^u$, the vacant set at level $u$, contains an infinite connected component is either zero or one. This zero-one law comes as a consequence of the ergodicity of the law of $\cV^u$, cf.~Theorem \ref{theo2.1}. We also show in Corollary \ref{cor2.3} that with probability one $\cI^u$ is connected. In Theorem \ref{theo2.4} we prove an exponential bound on the probability that $\cI^u$ contains a given subset of an $m$-dimensional discrete subspace of $\IZ^d$, with $m \le d-3$. This result has a similar flavor to Theorem 2.1 of \cite{BenjSzni06}, or Theorem 1.2 of \cite{DembSzni08}, but has a more algebraic proof due to the nature of our basic model. Combined with a Peierls-type argument, cf.~Remark \ref{rem2.5}, it can be used to show that when $d$ is large enough, $\cV^u$ percolates when $u$ is chosen sufficiently small. In Section 4 we will present a more powerful method proving such a result as soon as $d \ge 7$. We begin with some notation.
 
 \medskip
 We denote with $Q_u$, the law on $\{0,1\}^{\IZ^d}$ of $(1 \{x \in \cV^u\})_{x \in \IZ^d}$, for $u \ge 0$. We write $Y_x, x \in \IZ^d$, for the canonical coordinates on $\{0,1\}^{\IZ^d}$, $\cY$ for the canonical $\sigma$-algebra, and $t_x, x \in \IZ^d$, for the canonical shift. We also consider for $u \ge 0$ the event
 \begin{equation}\label{2.1}
 \mbox{Perc$(u) = \{\o \in \Omega; \;\cV^u(\o)$ contains an infinite connected component\}},   
 \end{equation}
as well as
\begin{equation}\label{2.2}
 \mbox{$\eta(u) = \IP[0$ belongs to an infinite connected component of $\cV^u]$}\,. 
 \end{equation}
 
\medskip \n
 The first main result of this section is:
 \begin{theorem}\label{theo2.1} $(d \ge 3)$
\begin{align}
&\mbox{For any $u \ge 0$, $(t_x)_{x \in \IZ^d}$ is a measure preserving flow on $(\{0,1\}^{\IZ^d}, \cY, Q_u)$} \label{2.3}
\\
&\mbox{which is ergodic}\,. \nonumber
\\[2ex]
&\mbox{For any $u \ge 0$, $\IP[{\rm Perc}(u)] = 0$ or $1$}\,. \label{2.4}
\end{align}
 \end{theorem}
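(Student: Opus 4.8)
\textbf{Proof strategy for Theorem \ref{theo2.1}.}

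The plan is to prove the ergodicity statement \eqref{2.3} and then deduce the zero-one law \eqref{2.4} as an easy consequence. For \eqref{2.3}, the measure-preserving property of $(t_x)_{x \in \IZ^d}$ on $(\{0,1\}^{\IZ^d}, \cY, Q_u)$ is immediate from the translation invariance of $\IP$ established in \eqref{1.48}, together with the fact that $\tau_x \cI^u(\o) = \cI^u(\tau_x \o)$ (which follows from \eqref{1.53} and the definition of $\tau_x$ in \eqref{1.44}). So the whole content lies in ergodicity. I would exploit that, by translation invariance, it suffices to show that $(t_x)_{x \in \IZ^d}$ is \emph{mixing}: for events $A, B \in \cY$ depending on finitely many coordinates, $Q_u[A \cap t_y^{-1}B] \to Q_u[A]\,Q_u[B]$ as $|y| \to \infty$. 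By a standard approximation / Dynkin-class argument, mixing on a generating algebra of cylinder events implies ergodicity of the whole flow, so reducing to finite-range events is legitimate.

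The key step is therefore the \emph{asymptotic independence} of the restriction of $\cV^u$ (equivalently $\cI^u$) to two far-apart finite boxes $K$ and $K + y$. Here I would use the Poissonian structure crucially. By \eqref{1.54}, $\cI^u \cap (K \cup (K+y))$ is a measurable function of $\mu_{K',u}(\o)$ for $K' = K \cup (K+y)$, and by \eqref{0.6}/\eqref{1.45} this is a Poisson point process on $W_+$ with intensity $u\,P_{e_{K'}}(dw)$. Now split each trajectory's contribution: a trajectory from the cloud either hits $K$, or hits $K+y$, or both (or neither, which is irrelevant). Write $e_{K'} = e_{K'}^{(1)} + e_{K'}^{(2)}$ where $e_{K'}^{(1)}$ is supported on $\partial_{\rm int}K$ and $e_{K'}^{(2)}$ on $\partial_{\rm int}(K+y)$; by the superposition/thinning properties of Poisson processes, $\mu_{K',u}$ splits into independent pieces, and the ``bad'' event is that some trajectory started near $K$ actually reaches $K+y$ (or vice versa). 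The probability of such a connecting trajectory is controlled by $P_x[H_{K+y} < \infty]$ for $x \in \partial_{\rm int} K$, which by \eqref{1.8}, \eqref{1.9} and standard Green function decay is $O(|y|^{-(d-2)}) \to 0$. Off this bad event, $\cI^u \cap K$ and $\cI^u \cap (K+y)$ are determined by disjoint (hence independent) parts of the Poisson cloud, and each has, up to the small error, the same law as a single-box interlacement. Combining these estimates gives the mixing bound, hence ergodicity.

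Finally, \eqref{2.4} follows by a routine argument: the event $\mathrm{Perc}(u)$ corresponds under the map $\o \mapsto (1\{x \in \cV^u(\o)\})_x$ to a translation-invariant event in $\cY$ (the existence of an infinite connected component of $\{Y_x = 1\}$ is unchanged by any $t_x$), so by ergodicity it has $Q_u$-probability $0$ or $1$, i.e. $\IP[\mathrm{Perc}(u)] \in \{0,1\}$.

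\textbf{Main obstacle.} The delicate point is the quantitative decorrelation estimate: one must show that the dependence between $\cI^u \cap K$ and $\cI^u \cap (K+y)$ genuinely vanishes as $|y| \to \infty$, \emph{despite} the long-range correlations displayed in \eqref{0.10}/\eqref{1.67a}. The long-range correlations decay only polynomially, so one cannot hope for exponential mixing; but polynomial decay of covariances is enough for ergodicity. The technical work is in carefully setting up the Poisson splitting so that the ``connecting trajectory'' bad event is isolated and bounded by the Green-function estimate \eqref{1.9}, and in checking that conditioning on its complement does not distort the marginal laws by more than a vanishing amount. An alternative, perhaps cleaner, route avoids explicit trajectory surgery: show directly that $\mathrm{cov}_{Q_u}(f \circ t_y, g) \to 0$ for local $f,g$ using the exponential-moment formula \eqref{1.42} applied to indicator-type test functions supported on $W^*_K$ and $W^*_{K+y}$, where the cross term in the exponent is governed by $\nu(W^*_K \cap W^*_{K+y}) = $ the $\nu$-measure of trajectories meeting both boxes, which again is $O(|y|^{-(d-2)})$ by the capacity/Green-function bounds.
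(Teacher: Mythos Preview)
Your proposal is correct and follows essentially the same route as the paper: reduce ergodicity to mixing of local events via approximation, then split the Poisson cloud $\mu_{K\cup(K+y),u}$ into four independent pieces according to starting box and whether the other box is hit, and bound the ``connecting'' pieces by the Green-function estimate $O(|y|^{-(d-2)})$. The paper makes the comparison precise via a coupling with auxiliary independent copies of the cross pieces rather than by conditioning on the bad event's complement, but this is a cosmetic difference in the same argument.
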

 
\begin{proof}
We begin with the proof of (\ref{2.3}). We denote with $\psi_u: \Omega \r \{0,1\}^{\IZ^d}$, the map $\psi_u(\o) = \big(1\{x \in \cV^u(\o)\}\big)_{x \in \IZ^d}$, so that $Q_u = \psi_u \circ\IP$. Note that with (\ref{1.44}), (\ref{1.53}), (\ref{1.55}), one has
\begin{equation}\label{2.5}
t_x \circ \psi_u = \psi_u \circ \tau_x, \;\mbox{for $x \in \IZ^d$} \,.
\end{equation}

\medskip\n
Since $\IP$ is invariant under $(\tau_x)$, cf.~(\ref{1.48}), it follows that $Q_u$ is invariant under $(t_x)$. To prove the ergodicity of $(t_x)$, we argue as follows. We consider $u \ge 0$, and note that the claim will follow once we show that for any $K \subset \subset \IZ^d$, and any $[0,1]$-valued $\sigma(Y_z,z \in K)$-measurable function $f$ on $\{0,1\}^{\IZ^d}$, one has
\begin{equation}\label{2.6}
\lim\limits_{|x|\r\infty} E^{Q_u} [f \,f \circ t_x] = E^{Q_u}[f]^2\,.
\end{equation}

\medskip\n
Indeed the indicator function of any $A \in \cY$ invariant under $(t_x)_{x \in \IZ^d}$ can be approximated in $L^1(Q_u)$ by functions $f$ as above. With (\ref{2.6}) one classically deduces that necessarily $Q_u(A) = Q_u(A)^2$, whence $Q_u(A) \in \{0,1\}$. In view of (\ref{1.54}), with $K = K^\prime$, and (\ref{2.5}), the claim (\ref{2.6}) will follow once we show that for any $K \subset \subset \IZ^d$:
\begin{equation}\label{2.7}
\lim\limits_{|x|\r \infty} \E[F(\mu_{K,u}) \;F(\mu_{K,u}) \circ \tau_x] = \IE[F(\mu_{K,u})]^2\,, 
\end{equation}

\n
for any $[0,1]$-valued measurable function $F$ on the set of finite point-measures on $W_+$, endowed with its canonical $\sigma$-field. With (\ref{1.20}), (\ref{1.44}), we can find $G$ (depending on $x$), with similar properties as $F$, such that the expectation in the left-hand side of (\ref{2.7}) equals $\IE[F(\mu_{K,u}) \,G(\mu_{K + x,u})]$.

\medskip
From now on we assume $|x|$ large enough so that $K \cap (K + x) = \phi$. To control the above expectation we are going to express both $\mu_{K,u}$ and $\mu_{K + x, u}$ in terms of $\mu_{K \cup (K+x),u}$, with the help of (\ref{1.21}) i), and extract the desired asymptotic independence. We will recurrently use this type of decomposition in what follows. Namely with $V = K \cup (K+x)$ we write:
\begin{equation}\label{2.8}
\begin{split}
\mu_{V,u} & = \mu_{1,1} + \mu_{1,2} +  \mu_{2,1} + \mu_{2,2}, \;\;\mbox{where}
\\[1ex]
\mu_{1,1}(dw) & = 1\{X_0 \in K, \,H_{x+K} = \infty\} \,\mu_{V,u} (dw)\,,
\\[1ex]
\mu_{1,2}(dw) & = 1 \{X_0 \in K, \,H_{x+K} < \infty\} \,\mu_{V,u}(dw)\,,
\end{split}
\end{equation}

\medskip\n
and similar formulas for $\mu_{2,2}$ and $\mu_{2,1}$ with the role of $K$ and $K + x$ exchanged. It follows from (\ref{1.20}), (\ref{1.45}) that
\begin{equation}\label{2.9}
\mbox{$\mu_{i,j}$, $1 \le i, j \le 2$, are independent Poisson point processes on $W_+$}\,,
\end{equation}

\medskip\n
and their respective intensity measures are:
\begin{equation}\label{2.10}
\begin{array}{ll}
\gamma_{1,1} = u 1\{X_0 \in K, H_{K+x} = \infty\} \,P_{e_V}, & \gamma_{1,2} = u 1\{X_0 \in K, H_{K+x} < \infty\} \,P_{e_V},
\\[1ex]
\gamma_{2,1} = u 1\{X_0 \in K + x, H_K < \infty\} \,P_{e_V}, & \gamma_{2,2} = u 1\{X_0 \in K + x, H_K = \infty\} \,P_{e_V}\,.
\end{array}
\end{equation}

\n
As a consequence of (\ref{1.20}), (\ref{1.21}) i), we see that
\begin{equation}\label{2.11}
\begin{split}
\mu_{K,u} & = \mu_{1,1} + \mu_{1,2} + \ov{\mu}^K_{2,1} \;,
\\[1ex]
\mu_{K+x,u} & = \mu_{2,2} + \mu_{2,1} + \ov{\mu}_{1,2}^{K+x}\,,
\end{split}
\end{equation}

\medskip\n
where given $U \subset \subset \IZ^d$, and $\mu(dw) = \sum_{0 \le i \le N} \delta_{w_i}$ a finite point measure on $W_+$, $\ov{\mu}^U(dw) = \sum_{0 \le i \le N} \delta_{\theta_{H_U}(w_i)} 1\{H_U(w_i) < \infty\}$, and we have used in (\ref{2.11}) the fact that $\ov{\mu}^K_{2,2} = 0$, and $\ov{\mu}_{1,1}^{K+x} = 0$. Therefore introducing auxiliary independent Poisson point processes $\mu^\prime_{1,2}$, $\mu^\prime_{2,1}$, independent of $\mu_{i,j}$, $1 \le i,j \le 2$, with the same distribution as $\mu_{1,2}, \mu_{2,1}$ respectively, we find that
\begin{equation}\label{2.12}
\mu^\prime_{K,u} \stackrel{\rm def}{=} \mu_{1,1} + \mu_{1,2} + \ov{\mu^\prime}_{2,1}\,{\hspace{-2.5ex}^{K}}\;, \;\mu^\prime_{K + x,u} \stackrel{\rm def}{=} \mu_{2,2} + \mu_{2,1} + \ov{\mu^\prime}_{1,2}\,{\hspace{-2.5ex}^{K+x}} \,,
\end{equation}

\n
are independent point processes respectively distributed as $\mu_{K,u}$ and $\mu_{K + x,u}$. With the same notation as in (\ref{1.67a}) we find that 
\begin{equation}\label{2.13}
\begin{array}{l}
\big|{\rm cov}_\IP\big(F(\mu_{K,u}), \,G(\mu_{K+x,u})\big)\big| = 
\\
\big|\IE[F(\mu_{K,u}) \,G(\mu_{K+x,u}) - F(\mu^\prime_{K,u}) 
\,G(\mu^\prime_{K+x,u})] \,|\,\stackrel{(\ref{2.11}), (\ref{2.12})}{\le}
\\
\mbox{$\IP[\mu_{1,2}$ or $\mu_{2,1}$ or $\mu^\prime_{1,2}$ or $\mu^\prime_{2,1}$ is different from $0] \stackrel{(\ref{2.9}),(\ref{2.10})}{\le}$}
\\[2ex]
2(1 - \exp\{ - \gamma_{1,2}(W_+)\}) + 2(1 - \exp\{-\gamma_{2,1}(W_+)\}) \le
\\[2ex]
2 u (P_{e_V} [X_0 \in K, H_{K+x} < \infty] + P_{e_V}[X_0 \in K + x, H_K < \infty])\,,
\end{array}
\end{equation}

\medskip\n
where in the last step we have used the inequality $1 - e^{-v} \le v$, for $v \ge 0$, in addition to (\ref{2.10}). Observe now that
\begin{equation}\label{2.14}
\begin{array}{l}
P_{e_V} [X_0 \in K, H_{K+x} < \infty] = \dsl_{z \in K} e_V(z) \,P_z [H_{K+x} < \infty] \stackrel{(\ref{1.8})}{=}
\\[1ex]
\dsl_{z \in K, y \in K+x} e_V (z) \,g(z,y) \,e_{K+x}(y) \stackrel{(\ref{1.6})}{\le} \dsl_{z \in K, y \in K+x} e_K(z) \,g(z,y) \,e_{K+x}(y) \le  
\\
\\[-1ex]
c\;\dis\frac{{\rm cap}(K)^2}{d(K,K+x)^{d-2}} \,,
\end{array}
\end{equation}

\medskip\n
with the notation introduced above (\ref{1.1}), as well as standard bounds on the Green function, cf.~\cite{Lawl91}, p.~31, and translation invariance. A similar bound holds for $P_{e_V} [X_0 \in K+x, H_K < \infty]$, and with (\ref{2.13}) we see that for $u \ge 0$, $K \subset \subset \IZ^d$, $x \in \IZ^d$, $F,G$-measurable functions on the set of finite point measures on $W_+$ with values in $[0,1]$,
\begin{equation}\label{2.15}
\big|{\rm cov}_\IP \big(F(\mu_{K,u}), \,G(\mu_{K+x,u})\big)\big| \le c\,u \;\dis\frac{{\rm cap}(K)^2}{d(K,K+x)^{d-2}} \;.
\end{equation}

\n
This implies (\ref{2.7}) and thus concludes the proof of (\ref{2.3}). As for (\ref{2.4}), note that Perc$(u) = \psi_u^{-1}(A)$, where $A \in \cY$ stands for the invariant event consisting of configurations in $\{0,1\}^{\IZ^d}$ such that there is an infinite connected component in the subset of $\IZ^d$ where the configuration takes the value $1$. It now follows from (\ref{2.3}) that $Q_u(A) = \IP[{\rm Perc}(u)]$ is either $0$ or $1$. This proves (\ref{2.4}).
\end{proof}

\begin{remark}\label{rem2.2} ~\rm

\medskip\n
1) Note that (\ref{2.15}) has a similar flavor to (\ref{1.67a}), which mirrors the long range dependence built into the basic model. Taming this effect will bring some serious difficulties in \linebreak Section 3. 

\bigskip\n
2) One can characterize $Q_u$ as the unique probability on $(\{0,1\}^{\IZ^d}, \cY)$ such that
\begin{equation}\label{2.16}
Q_u(Y_z = 1, \;\mbox{for}\;z \in K) = \exp\{- u \;{\rm cap}(K)\}, \;\mbox{for any}\; K \subset \subset \IZ^d\,.
\end{equation}

\n
Indeed the collection of events which appear in (\ref{2.16}) is stable under finite intersection and generates $\cY$. In a slightly more constructive fashion, we see with a classical inclusion exclusion argument that for any disjoint finite subsets $K, K^\prime$ of $\IZ^d$, one has 
\begin{equation}\label{2.17}
\begin{array}{l}
Q_u [Y_z = 1, \;\mbox{for} \;z \in K, \,Y_z = 0, \;\mbox{for} \;z \in K^\prime] =
\\[1ex]
E^{Q_u} \Big[\prod\limits_{z \in K} Y_z \prod\limits_{z \in K^\prime} \,(1 - Y_z) \Big] = \dsl_{A \subseteq K^\prime} (-1)^{|A|} \exp\{ - u \;{\rm cap}(K \cup A)\}\,.
\end{array}
\end{equation}

\medskip\n
3) The present work does not address the question of whether there is a unique infinite connected component in $\cV^u$ when it percolates and $u$ is positive. The answer to this question is affirmative, as proved in \cite{Teix08}. The classical results of Burton-Keane \cite{BurtKean89},
see also \cite{HaggJona06}, p.~326,~332, implying such a uniqueness
do not apply because, as one easily sees, $Q_u$ fails to fulfill the so-called finite energy condition:
\begin{equation*}
\mbox{$0 < Q_u( Y_x = 1 | Y_z, z \not= x ) < 1$, $Q_u$-a.s., for all $x \in \IZ^d$}\,.
\end{equation*}
Loosely speaking the problem stems from the fact that the set of sites $w$, where $Y_w$ takes
the value $0$, has no bounded component, and on some configurations turning a value $0$ into
a value $1$, say at the origin, can lead to a forbidden configuration,
(see also (\ref{1.67})). In Corollary \ref{cor2.3} we are able to adapt the argument of Burton-Keane 
in the case of $\cI^u$, and prove that with probability one $\cI^u$ is connected. In the case of $\cV^u$ 
the construction of so-called trifurcations is more delicate, and can be found in \cite{Teix08}.

\bigskip\n
4) Denote with $\IE_d = \big\{\{x,y\}; x, y$ in $\IZ^d$ with $|x-y| = 1\big\}$, the collection of nearest neighbor edges on $\IZ^d$. Given $\o \in \Omega$ and $u \ge 0$, one can consider the subset $\wt{\cI}^u(\o)$ of $\IE_d$ consisting of the edges which are traversed by at least one of the trajectories at level $u$ entering $\o$:
\begin{equation}\label{2.18new}
\begin{split}
\wt{\cI}^u(\o) = \big\{ & e \in \IE_d;\; \mbox{for some $i \ge 0$, with $u_i \le u$ and $n \in \IZ$,}
\\
& e = \{w_i(n), w_i(n+1)\}\big\}, \;\mbox{if $\o = \sum_{i \ge 0} \delta_{(w_i^*,u_i)} \in \Omega$}\,,
\end{split}
\end{equation}

\n
and $w_i$ is any element of $W$ with $\pi^*(w_i) = w_i^*$.

\medskip
Connected components of $\IZ^d$ induced by $\wt{\cI}^u(\o)$ are either singletons in $\cI^u(\o)^c$ or infinite components partitioning $\cI^u(\o)$. Denoting with $\wt{\psi}_u$: $\Omega \rightarrow \{0,1\}^{\IE_d}$ the map $\wt{\psi}_u(\o) = ( 1\{e \in \wt{\cI}^u(\o)\})_{e \in \IE_d}$, one can consider the image $\wt{Q}_u$ on $(\{0,1\}^{\IE_d},\wt{\cY})$ of $\IP$ under $\wt{\psi}_u$, where $\wt{\cY}$ stands for the canonical $\sigma$-algebra on $\{0,1\}^{\IE_d}$. With $\wt{t}_x$, $x \in \IZ^d$, the canonical shift on $\{0,1\}^{\IE_d}$, one finds exactly as in (\ref{2.5}) that $\wt{t}_x \circ \wt{\psi}_u = \wt{\psi}_u \circ \tau_x$, for $x \in \IZ^d$. The same proof as in (\ref{2.3}), see in particular (\ref{2.7}), now yields that
\begin{equation}\label{2.19new}
\begin{array}{l}
\mbox{for any $u \ge 0$, $(\wt{t}_x)_{x \in \IZ^d}$ is a measure preserving flow on $(\{0,1\}^{\IE_d}, \wt{\cY}, \wt{Q}_u)$}
\\[0.5ex]
\mbox{which is ergodic.}
\end{array}
\end{equation}
\hfill $\square$
\end{remark}

The first statement below is an immediate consequence of Theorem \ref{theo2.1} and (\ref{2.2}).
\medskip\n

\begin{corollary}\label{cor2.3} $(d \ge 3)$

\medskip
For $u \ge 0$, one has the equivalences
\begin{equation}\label{2.16a}
\begin{array}{ll}
{\rm i)} & \IP[{\rm Perc}(u)] = 1 \Longleftrightarrow \eta(u) > 0 \,,
\\[1ex]
{\rm ii)} & \IP[{\rm Perc}(u)] = 0 \Longleftrightarrow  \eta(u) = 0 \,.
\end{array}
\end{equation}
\begin{equation}\label{2.16b}
\mbox{For $u > 0$, $\IP$-a.s., $\cI^u$ is an infinite connected subset of $\IZ^d$}\,.
\end{equation}

\end{corollary}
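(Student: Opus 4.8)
The plan is to prove \eqref{2.16b} in two stages: first establish that $\cI^u$ is almost surely infinite, and then adapt the Burton--Keane argument to show it is connected. For the first stage, note that $\cI^u$ is nonempty with positive probability (e.g. $\IP[0 \in \cI^u] = 1 - e^{-u/g(0)} > 0$ by \eqref{1.58}), and by translation invariance \eqref{1.48} the event $\{\cI^u = \emptyset\}$ has probability either $0$ or $1$; but it cannot be $1$, so $\IP$-a.s.\ $\cI^u \neq \emptyset$. On the event $\{\cI^u \neq \emptyset\}$ some trajectory $w_i^*$ with $u_i \le u$ enters the picture, and since any $w$ with $\pi^*(w) = w_i^*$ lies in $W$, its range $w(\IZ)$ is an infinite subset of $\IZ^d$ contained in $\cI^u$; hence $\cI^u$ is infinite. (Here it is also worth recording that each individual trajectory's range is itself connected, being the range of a nearest-neighbor path, which is the elementary building block for the connectivity argument.)

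For the second stage, the key structural fact is that $\cI^u$ is a union of the connected sets $w_i^*(\IZ)$, $u_i \le u$, and one must control how these pieces are glued together. First I would reduce to a local statement: for $K \subset\subset \IZ^d$, by \eqref{1.54} and \eqref{0.6}/\eqref{1.67}, $\cI^u \cap K$ is the trace on $K$ of a Poisson cloud of finitely many finite trajectories (those in the support of $\mu_{K',u}$ for $K \subset K'$), each of which has connected range. One then follows the Burton--Keane scheme: suppose for contradiction that with positive probability $\cI^u$ has at least two distinct infinite connected components; by ergodicity \eqref{2.3} (applied to the edge-configuration $\wt{Q}_u$ of Remark \ref{rem2.2} 4), or directly to $Q_u$) this number is an almost sure constant $N_\infty \in \{2, 3, \dots\} \cup \{\infty\}$. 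One shows $N_\infty$ cannot be finite and $\ge 2$ by a translation/exchangeability argument, and rules out $N_\infty = \infty$ by producing \emph{trifurcation} points: sites $x$ such that removing $x$ splits its component into (at least) three infinite pieces. The density of trifurcations would be positive by a local modification argument, but in a box of side $L$ the number of trifurcations is at most $|\partial B(0,L)| = O(L^{d-1})$ by the standard tree-counting combinatorics, forcing the density to zero --- a contradiction. Hence $N_\infty = 1$.

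The crucial point --- and the one where the argument for $\cI^u$ genuinely works whereas the naive analogue for $\cV^u$ does not --- is the local modification step producing trifurcations. Unlike $\cV^u$, the set $\cI^u$ enjoys the right kind of ``finite energy'': given a configuration $\o$ and a bounded region, with positive conditional probability one can add to $\o$ finitely many extra trajectories at level $\le u$ (the Poisson structure makes such additions have positive probability on any prescribed local event) that locally reconfigure $\cI^u \cap K$ so as to create a trifurcation, without destroying the ambient infinite components. Concretely, if near the origin three arms of (a priori distinct) infinite clusters come within a bounded distance, one throws in short trajectories joining them through a common site $0$, creating a trifurcation at $0$; the probability of this local picture is bounded below uniformly by the intensity of the relevant Poisson cloud on $K$. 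I expect this local surgery --- making precise that the needed finite collection of joining trajectories occurs with positive probability conditionally on the rest, using \eqref{1.54} and the independence properties of the Poisson process --- to be the main obstacle, though it is exactly the place where the explicit Poissonian description \eqref{1.67} of $\cI^u \cap K$ makes things tractable. The remaining combinatorial estimate bounding the number of trifurcations in a box, and the ergodicity input, are then routine adaptations of \cite{BurtKean89}.
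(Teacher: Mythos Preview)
Your outline for \eqref{2.16b} follows the same Burton--Keane strategy as the paper, and the infiniteness of $\cI^u$ is handled correctly. However, the local-modification step you flag as the main obstacle is genuinely mishandled: simply \emph{adding} trajectories does not produce trifurcations. A trifurcation at $0$ requires exactly three $\wt{\cI}^u$-edges at $0$ and that removing $0$ yields three infinite components. Adding a doubly-infinite trajectory through $0$ gives no control over (a) how many other trajectories already pass through $0$, nor over (b) where the two infinite ends of the added trajectory go --- they may well link the very components you are trying to keep separate after removal of $0$. Your ``finite energy via addition'' lets you merge components (which is enough for the step ruling out $2 \le N_u < \infty$, though ``translation/exchangeability'' is not the argument --- one needs an actual merging construction as in Newman--Schulman), but it cannot create the surgical local picture a trifurcation demands.

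The paper resolves this by \emph{modifying} existing trajectories rather than adding new ones. It decomposes $\o = \o^0_K + \o^1_K$ into the parts missing and hitting $K$, cf.\ \eqref{2.21a}, reduces to the situation where $\o^1_K$ at level $u$ consists of exactly three trajectories $w_1^*, w_2^*, w_3^*$ lying in distinct infinite components of $\wt{\cI}^u(\o^0_K + \sum_i \delta_{(w_i^*,u)})$ with well-separated entrance and last-exit points of $K$, and then applies a map $\gamma$ that \emph{replaces} each trajectory's portion between first entrance in and last exit from $K$ by a prescribed path $\tau_i$, the three $\tau_i$ meeting only at $0$ and each touching $0$ via a single back-and-forth edge. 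The key technical point is that $\gamma \circ (1_{W_K^*}\nu)^{\otimes 3}$ is absolutely continuous with respect to $(1_{W_K^*}\nu)^{\otimes 3}$, which follows from the explicit description \eqref{1.26} of $Q_K$; this absolute continuity --- not any finite-energy property of $Q_u$, which indeed fails, cf.\ Remark~\ref{rem2.2}~3) --- is what transfers positive probability to the trifurcation event. A similar but simpler map $\varphi$ inserting a loop through $0$ handles the finite-$k$ case. You should replace the ``throw in short trajectories'' idea with this modification-plus-absolute-continuity mechanism.

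Finally, you did not address \eqref{2.16a}; it is immediate from the zero-one law \eqref{2.4} together with $\eta(u) \le \IP[{\rm Perc}(u)]$ and, by translation invariance, $\IP[{\rm Perc}(u)] \le \sum_{x}\IP[x \text{ is in an infinite component of } \cV^u]$, but it should still be mentioned.
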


\begin{proof}
We begin with (\ref{2.16a}). One simply needs to observe that 
\begin{equation*}
\eta(u) \le \IP[{\rm Perc}(u)] \le \dsl_{x \in \IZ^d}  \;\mbox{$\IP[x$ belongs to an infinite connected component of $\cV^u$}]\,,
\end{equation*}

\n
and in view of (\ref{1.48}) all summands in the right-hand side equal $\eta(u)$. The claim (\ref{2.16a}) now follows from the zero-one law (\ref{2.4}).

We now turn to the proof of (\ref{2.16b}), which is an adaptation of the argument of Burton-Keane \cite{BurtKean89}. The consideration of $\wt{\cI}^u$, cf.~Remark \ref{rem2.2} 4) will be helpful, see in particular the observation below (\ref{2.18new}).  With the ergodicity property (\ref{2.19new}), it follows that the total number $N_u$ of infinite connected components determined by $\wt{\cI}^u$ is $\IP$-a.s. equal to a positive, possibly infinite, constant. With the observation below (\ref{2.18new}) our claim (\ref{2.16b}) will follow once we show that this constant equals 1. The first step, see also \cite{NewmSchu81}, is to argue that
\begin{equation}\label{2.20a}
\mbox{for $2 \le k < \infty, \;\IP[N_u = k] = 0$}\,.
\end{equation}

\medskip\n
Assume instead that for some $2 \le k < \infty$, $\IP[N_u = k] = 1$. Then we can find $K = B(0,L)$ such that $\IP[A] > 0$, where $A$ denotes the event $\{N_u = k$ and $K$ intersects two distinct infinite components determined by $\wt{\cI}^u(\o)\}$. Note that under $\IP$
\begin{equation}\label{2.21a}
\begin{array}{l}
\mbox{$\o^1_K = 1_{W^*_K \times \IR_+}\o$ and $\o^0_K = 1_{(W^*_K)^c \times \IR_+} \o$ are two independent Poisson} 
\\
\mbox{point processes with respective intensity measures $1_{W^*_K \times \IR_+} d \nu\,du$ and}
\\
1_{(W^*_K)^c \times \IR_+} d \nu\,du\,.
\end{array}
\end{equation}

\n
For each $z \in S(0,L)$, the ``surface of $K$'', we now pick a nearest neighbor loop in $K$ starting and 
ending at $z$, and passing through $0$. We then define a map $\varphi$ from $W^*_K$ into itself such 
that for $w^* \in W^*_K$, $\varphi(w^*)$ is the trajectory (modulo time-shift) obtained by ``inserting 
in $w^*$'' just after the entrance in $K$, the loop attached to the entrance point of $w^*$ in $K$. 
The map $\varphi$ is in fact injective and one checks with (\ref{1.25}), (\ref{1.26}) that the image 
measure $\varphi \circ (1_{W^*_K} \nu)$ is absolutely continuous with respect to $1_{W^*_K} \nu$. We 
extend $\varphi$ to $W^*$, by letting $\varphi$ be the identity map on $(W^*_K)^c$. It now follows 
from the above observations that the measurable map $\Phi$ from $\Omega$ into itself defined by:
\begin{equation*}
\Phi(\o) = \dsl_{u_i \le u} \delta_{(\varphi(w_i^*), u_i)} + \dsl_{u_i > u} \delta_{(w_i^*,u_i)}, \;\mbox{for} \;\o = \dsl_{i \ge 1} \delta_{(w_i^*,u_i)}\,,
\end{equation*}
is such that
\begin{equation}\label{2.22a}
\mbox{$\Phi \circ \IP$ is absolutely continuous with respect to $\IP$}\,.
\end{equation}

\medskip\n
By construction $\Phi(\o)$ links together all infinite connected components of $\wt{\cI}^u(\o)$, which intersect $K$, and hence $\Phi(A) \subseteq \{N_u < k\}$, where $A$ appears below (\ref{2.20a}). We thus find that
\begin{equation}\label{2.23a}
\Phi \circ(1_A \IP) [N_u < k] = \IP [A \cap \Phi^{-1}(N_u < k)] = \IP [A] > 0\,,
\end{equation}

\medskip\n
and due to (\ref{2.22a}) we see that $\IP[N_u < k] > 0$, a contradiction. This proves (\ref{2.20a}). 
The claim (\ref{2.16b}) will now follow once we show that
\begin{equation}\label{2.24a}
\IP[N_u = \infty] = 0\,.
\end{equation}

\medskip\n
The heart of the matter, cf.~\cite{Grim99}, p.~199, or \cite{HaggJona06}, p.~297, 
is to show that with positive $\IP$-probability there is a trifurcation in $\wt{\cI}^u$, 
i.e. a site $x \in \cI^u(\o)$ with exactly three $\wt{\cI}^u(\o)$-neighbors and the removal of $x$ splits the infinite connected component of $x$ determined by $\wt{\cI}^u(\o)$ in exactly three infinite components.

\medskip
Assume by contradiction that $\IP[N_u = \infty] = 1$, then for arbitrarily large $L > 0$, one has with $K = B(0,L)$,
\begin{equation}\label{2.25a}
\begin{array}{l}
\mbox{$\IP[K$ intersects more than $4 |B(0,100)|$ infinite connected components}
\\
\mbox{of $\wt{\cI}^u(\o)] > 0$}\,.
\end{array}
\end{equation}

\medskip\n
We fix $L$ large enough, such that (\ref{2.25a}) holds and for any three couples of points $(z_1,z_2)$, $(z_3,z_4)$, $(z_5,z_6)$ on the $|\cdot|_\infty$-sphere $S(0,L)$, for which no point in a given pair may be within $|\cdot |_\infty$-distance 100 from any other pair (but points within a pair may be arbitrarily close or even coincide), we can construct $\tau_1,\tau_2,\tau_3$ finite nearest neighbor trajectories in $K$ with respective starting points $z_1,z_3,z_5$ and end points $z_2,z_4,z_6$, so that any two trajectories only meet in $0$, each trajectory visits $0$ only once, and this occurs by crossing an edge touching $0$ and immediately crossing the same edge in the reverse direction. With (\ref{2.21a}) and (\ref{2.25a}) we see that
\begin{equation*}
\begin{array}{l}
\mbox{$\IP \otimes (1_{W^*_K} \nu)^{\otimes m} \big[K$ intersects more than $4 |B(0,100)|$ infinite connected components}
\\
\mbox{determined by $\wt{\cI}^u(\o^0_K + \sum^m_{i=1} \delta_{(w_i^*,u)})\big] > 0$, for some $m > 4 \,|B(0,100)|$}\,,
\end{array}
\end{equation*}

\n
where $\o$ and $w^*_i$, $1 \le i \le m$, are the respective $\Omega$- and $W^*_K$-valued coordinates on the product space, and we use the notation from (\ref{2.18new}) and (\ref{2.21a}). On the above event we can select  three trajectories within the $w_1^*,\dots,w^*_m$ with supports lying in distinct infinite connected components and corresponding pairs of entrance and last exit points of $K$ with mutual $|\cdot |_\infty$-distance bigger than $100$. As a result we see that
\begin{equation}\label{2.26a}
\mbox{$\IP \otimes (1_{W^*_K} \nu)^{\otimes m} [C_m] > 0$, for some $m \ge 3$}\,,
\end{equation}

\n
where $C_m$ stands for the event
\begin{equation*}
\begin{array}{l}
\mbox{$\big\{\wt{\cI}^u (\o^0_K + \sum^m_{i=1} \,\delta_{(w^*_i,u)})$ has at least three infinite connected components}
\\
\mbox{meeting $K$ respectively containing $w^*_{i_1}(\IZ)$, $w^*_{i_2}(\IZ), w^*_{i_3}(\IZ)$ for some distinct}
\\
\mbox{$i_1,i_2,i_3$ in $\{1, \dots, m\}$, and the three corresponding pairs of entrance and last exit}
\\
\mbox{points of $K$ have mutual $|\cdot|_\infty$-distance bigger than $100\big\}$}.
\end{array}
\end{equation*}

\medskip\n
Observe now that without loss of generality we can assume $m=3$ in (\ref{2.26a}).

\medskip
We denote with $\gamma$ the map from $(W^*_K)^3$ into itself such that 
$\gamma(w_1^*,w_2^*,w_3^*) = (\ov{w}_1^{\,*},\ov{w}_2^{\,*},\ov{w}_3^{\,*})$, 
where $\gamma$ simply coincides with the identity if the three pairs of entrance 
and last exit points for $K$ for $w_1^*, w_2^*, w_3^*$ do not fulfill the condition 
appearing below (\ref{2.25a}), and otherwise such that 
$\ov{w}_1^{\,*},\ov{w}_2^{\,*},\ov{w}_3^{\,*}$ are obtained from $w_1^*,w_2^*,w_3^*$ by 
replacing the respective portions of trajectory between first entrance in $K$ and last 
exit from $K$ by $\tau_1,\tau_2,\tau_3$. With (\ref{1.25}), (\ref{1.26}) one checks that
\begin{equation}\label{2.27a}
\mbox{$\gamma \circ (1_{W^*_K} \nu)^{\otimes 3}$ is absolutely continuous with respect to $(1_{W^*_K} \nu)^{\otimes 3}$}\,. 
\end{equation}

\medskip\n
Note that on the event $C_3$, $0$ is a trifurcation point for $\wt{\cI}^u\big(\o^0_K + \sum^3_{i=1} \delta(\ov{\o}^*_i,u)\big)$, where the notation is the same as in the above paragraph. With a similar calculation as in (\ref{2.23a}) 
we see that
\begin{equation*}
\mbox{$\IP \otimes (1_{W^*_K} \nu)^{\otimes 3} \big[0$ is a trifurcation point for $\wt{\cI}^u (\o^0_K + \sum^3_{i=1} \delta_{(w^*_i,u)})\big] > 0$}\,.
\end{equation*}

\n
With (\ref{2.21a}) this readily implies that
\begin{equation}\label{2.28a}
\mbox{$\IP[0$ is a trifurcation point for $\wt{\cI}^u(\o)] > 0$}\,.
\end{equation}

\medskip\n
The proof of (\ref{2.24a}) now runs just as in \cite{Grim99}, p.~200-202. This concludes the proof of (\ref{2.16b}). 

\end{proof}

\medskip
Just as in the case of Bernoulli percolation, cf.~\cite{Grim99}, p.~13, we can introduce the critical value
\begin{equation}\label{2.17a}
u_* = \inf\{u \ge 0, \,\eta(u) = 0\} \in [0,\infty]\,.
\end{equation}

\medskip\n
Is this critical value non-degenerate? We will see in Section 3 that $u_* < \infty$, cf.~Theorem \ref{theo3.5}, and in Section 4 that $u_* > 0$, as soon as $d \ge 7$, cf.~Theorem \ref{theo4.3}.

\medskip
We are now going to discuss the exponential bound mentioned at the beginning of this section. For $1 \le m \le d$, we write $\cL_m$ for the collection of $m$-dimensional affine subspaces of $\IZ^d$ generated by $m$ distinct vectors of the canonical basis $(e_i)_{1 \le i \le d}$ of $\IR^d$:
\begin{equation}\label{2.18}
\begin{split}
\cL_m = \big\{F \subseteq \IZ^d; &\;\;  \mbox{for some $I \subseteq \{1,\dots,d\}$ with $|I| = m$ and some $y \in \IZ^d$}, 
\\
&\ \;\;F= y + \dsl_{i \in I} \IZ \,e_i\big\}\,,
\end{split}
\end{equation}
and introduce
\begin{equation}\label{2.19}
\mbox{$\cA_m =$ the collection of finite subsets $A$ with $A \subseteq F$ for some $F \in \cL_m$}\,.
\end{equation}

\n
We denote with $q(\nu)$ the return probability to the origin of simple random walk in $\IZ^\nu$, i.e. with hopefully obvious notation:
\begin{equation}\label{2.20}
q(\nu) = P_0^{\IZ^\nu} [\wt{H}_0 < \infty], \;\mbox{for $\nu \ge 1$}\,.
\end{equation}

\n
The promised exponential estimate comes in the following 
\begin{theorem}\label{theo2.4} $(d \ge 4$, $1 \le m \le d-3)$

\medskip
Assume that $\lambda > 0$ satisfies
\begin{equation}\label{2.21}
\chi(\lambda) \stackrel{\rm def}{=} e^\lambda \Big(\dis\frac{m}{d} + \Big(1 - \dis\frac{m}{d}\Big) \,q(d-m)\Big) < 1\,,
\end{equation}

\n
then for $u \ge 0$, $A \in \cA_m$ and $A \subseteq K \subset \subset \IZ^d$, with the notation $f_A(w) = \sum_{n \ge 0} 1_{\{X_n(w) \in A\}}$, for $w \in W_+$, one has
\begin{equation}\label{2.22}
\IE[\exp\{\lambda \langle \mu_{K,u}, f_A \rangle\}] \le \exp\Big\{ u \,{\rm cap} (A) \;\dis\frac{e^\lambda -1}{1 - \chi(\lambda)}\Big\}\,,
\end{equation}
and the left-hand side does not depend on $K$ as above. 

\medskip
Moreover there exists $u_1(d,m,\lambda) > 0$, such that:
\begin{equation}\label{2.23}
\IP[\cI^u \supseteq A] \le \exp\{- \lambda \,|A|\}, \;\mbox{for all $A \in \cA_m$ and $u \le u_1$}\,.
\end{equation}
\end{theorem}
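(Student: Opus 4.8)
The plan is to prove the two assertions of Theorem \ref{theo2.4} in sequence, with the moment bound (\ref{2.22}) being the substantive part and (\ref{2.23}) an easy Chebyshev-type consequence.

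\textbf{Step 1: Reduce to an excursion decomposition.} By (\ref{0.6})/(\ref{1.45}), $\mu_{K,u}$ is a Poisson point process on $W_+$ with intensity $u\,P_{e_K}(dw)$, so by the exponential formula for Poisson integrals,
\begin{equation*}
\IE[\exp\{\lambda \langle \mu_{K,u}, f_A\rangle\}] = \exp\Big\{u \int_{W_+} \big(e^{\lambda f_A(w)} - 1\big)\,P_{e_K}(dw)\Big\} = \exp\big\{u\, E_{e_K}[e^{\lambda f_A} - 1]\big\}\,.
\end{equation*}
So everything reduces to bounding $E_{e_K}[e^{\lambda f_A}-1]$, and in particular I want to show this equals $\mathrm{cap}(A)\frac{e^\lambda-1}{1-\chi(\lambda)}$ or less, independently of $K$. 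The natural device is to decompose the trajectory of the walk according to its successive visits to $A$. Writing $f_A(w)=\sum_{k\ge 1} 1\{R_k<\infty\}$ in the notation of (\ref{1.4}) with $K$ there replaced by $A$ and $U=\IZ^d$ (so $D_k = R_k+1$ essentially, i.e. just the successive return times to $A$), the strong Markov property at each visit to $A$ gives a geometric-type structure: starting the walk somewhere, the chance it ever hits $A$, then after each hit the chance it returns to $A$. Concretely, if $p = \sup_{y\in A} P_y[\widetilde H_A < \infty]$ is the worst-case return probability to $A$, then conditionally on $f_A \ge 1$, the number of further visits is stochastically dominated by a geometric variable with parameter $p$, so
\begin{equation*}
E_{e_K}[e^{\lambda f_A}] \le 1 + P_{e_K}[H_A < \infty]\Big(\sum_{k\ge 1} e^{\lambda k}(1-p)p^{k-1}\Big) = 1 + P_{e_K}[H_A<\infty]\,\frac{e^\lambda(1-p)}{1 - e^\lambda p}\,,
\end{equation*}
valid when $e^\lambda p < 1$; and $P_{e_K}[H_A<\infty] \le \mathrm{cap}(A)$ since $e_K$ has total mass $\mathrm{cap}(K)$ but more directly because $P_{e_K}[H_A<\infty] = \sum_y e_K(y)P_y[H_A<\infty]$ and one can bound this by $\mathrm{cap}(A)$ using that $A\subseteq K$ and the sweeping identity for equilibrium measures. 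Actually the clean statement is $P_{e_K}[H_A<\infty]\le \mathrm{cap}(A)$; I would verify this using (\ref{1.8}) and the fact that $e_K$ restricted appropriately dominates, or simply note $E_{e_A}[\cdots]$ is the relevant $K$-independent quantity and $e_K \ge$ ... — this is the one place needing a careful reversibility/last-exit argument, mirroring the proof of (\ref{1.25}). After this, $e^\lambda E_{e_K}[e^{\lambda f_A}-1] \le \mathrm{cap}(A)\frac{e^\lambda-1}{1-e^\lambda p}$ modulo bookkeeping, and it remains to show $e^\lambda p \le \chi(\lambda)$, i.e. $p \le \frac{m}{d}+(1-\frac{m}{d})q(d-m)$.

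\textbf{Step 2: Bound the return probability $p$.} This is where the hypothesis $A\in\cA_m$, $m\le d-3$, enters. Fix $F\in\cL_m$ with $A\subseteq F$; after relabeling coordinates, $F = y + \sum_{i\le m}\IZ e_i$. The first step of the walk from a point of $A$ lies in $F$ with probability $m/d$ (moving in one of the $m$ ``parallel'' directions) and leaves $F$ with probability $(d-m)/d$. In the second case, the walk now needs to return to $A\subseteq F$; projecting onto the $(d-m)$ orthogonal coordinates, the returns of the $\IZ^d$-walk to the hyperplane $F$ are governed by a lazy random walk on $\IZ^{d-m}$, and the probability that the $\IZ^d$-walk ever comes back to $F$ from a neighbor off $F$ is at most the return probability $q(d-m)$ of SRW on $\IZ^{d-m}$ (here $d-m\ge 3$ guarantees transience, hence $q(d-m)<1$); and even then, having returned to $F$, hitting $A$ exactly is only harder. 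In the first case ($m/d$) I crudely bound the return probability by $1$. Hence $p \le \frac m d\cdot 1 + \big(1-\frac m d\big)q(d-m)$, exactly as needed, so $e^\lambda p \le \chi(\lambda) < 1$ by hypothesis (\ref{2.21}), and the bound $e^\lambda p < 1$ used in Step 1 holds. Assembling Steps 1 and 2 gives (\ref{2.22}); the $K$-independence of the left side is visible because the bound only ever involved $E_{e_A}$-type quantities, but more to the point one argues that $\langle \mu_{K,u},f_A\rangle$ has the same law for all $K\supseteq A$ via the compatibility (\ref{1.21})(i), since trajectories not hitting $A$ contribute $0$ to $f_A$.

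\textbf{Step 3: Deduce (\ref{2.23}).} By (\ref{1.56}), $\{\cI^u\supseteq A\}$ forces every site of $A$ to be visited, so $f_A \ge |A|$ on this event (each of the $|A|$ sites contributes at least one visit, and distinct sites are visited at distinct times). Hence by Chebyshev and (\ref{2.22}) with $K=A$,
\begin{equation*}
\IP[\cI^u\supseteq A] \le \IP[\langle\mu_{A,u},f_A\rangle \ge |A|] \le e^{-\lambda |A|}\,\IE[e^{\lambda\langle\mu_{A,u},f_A\rangle}] \le \exp\Big\{-\lambda|A| + u\,\mathrm{cap}(A)\frac{e^\lambda-1}{1-\chi(\lambda)}\Big\}\,.
\end{equation*}
Since $\mathrm{cap}(A)\le |A|\,\mathrm{cap}(\{0\}) = |A|/g(0)$ by subadditivity of capacity, the exponent is at most $-|A|\big(\lambda - \frac{u}{g(0)}\cdot\frac{e^\lambda-1}{1-\chi(\lambda)}\big)$, which is $\le -\lambda|A|$ as soon as $u$ is small enough; one sets $u_1(d,m,\lambda)$ to be any positive value making the bracketed correction nonpositive, e.g. $u_1 = g(0)\,\lambda\,(1-\chi(\lambda))/(e^\lambda-1)$ would already give $\IP[\cI^u\supseteq A]\le 1$, so one takes $u_1$ a suitable fraction of this so that the net coefficient of $|A|$ stays $\le -\lambda$ — any $u_1$ with $\frac{u_1}{g(0)}\frac{e^\lambda-1}{1-\chi(\lambda)}\le \lambda$ fails, so more carefully require this to be $0$... the correct choice is simply any $u_1>0$ with $\frac{u_1}{g(0)}\frac{e^\lambda-1}{1-\chi(\lambda)} = 0$, which is impossible, so instead weaken the target: actually one absorbs by noting we may replace $\lambda$ by $\lambda' > \lambda$ in (\ref{2.22}) if $\chi(\lambda')<1$ still — if $\chi(\lambda)<1$ strictly there is room — and then for $u\le u_1$ small the exponent is $\le -\lambda|A|$. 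This last optimization is routine.

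\textbf{Main obstacle.} The genuinely delicate point is Step 1's claim that $P_{e_K}[H_A<\infty]\le \mathrm{cap}(A)$ together with the $K$-independence, which requires a last-exit / reversibility argument in the spirit of the proof of Theorem \ref{theo1.1}; and in Step 2, making precise that the ``return to $F$ off $F$'' probability is dominated by $q(d-m)$ rather than something larger — the lazy-walk projection has to be handled so that the laziness (steps within $F$) does not increase the return probability to the $(d-m)$-dimensional sublattice, which is true because adding lazy steps to a transient walk cannot make it recurrent and in fact the chance of ever returning to $0$ for the projected walk is exactly $q(d-m)$ regardless of the holding times.
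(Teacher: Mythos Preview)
Your proof is correct and reaches exactly the bound (\ref{2.22}), but it proceeds by a different decomposition than the paper. The paper works with excursions away from the affine subspace $F$: it bounds $e^{\lambda f_A}\le e^{\lambda T_F}\big(1+1\{R_F<\infty\}(e^{\lambda f_A}\circ\theta_{R_F}-1)\big)$, observes that $T_F$ is geometric with success probability $1-m/d$ so that $E_z[e^{\lambda T_F}]=\alpha:=e^\lambda(1-m/d)/(1-e^\lambda m/d)$ for $z\in F$, and then solves a self-referential inequality for $\phi(x)=E_x[e^{\lambda f_A}]$ to obtain $\|\phi\|_\infty-1\le(\alpha-1)/(1-q(d-m)\alpha)=(e^\lambda-1)/(1-\chi(\lambda))$. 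You instead go straight to the successive visits to $A$: you dominate $f_A$ under $P_x$, $x\in A$, by a geometric variable with failure probability $p=\sup_{y\in A}P_y[\wt H_A<\infty]$, and then bound $p$ by a one-step analysis (first step in $F$ with probability $m/d$, bounded by $1$; first step off $F$ with probability $1-m/d$, then return to $F$ with probability exactly $q(d-m)$ via the projected lazy walk). This gives $e^\lambda p\le\chi(\lambda)$ and hence $E_x[e^{\lambda f_A}]-1\le(e^\lambda-1)/(1-\chi(\lambda))$, the same bound. Your route is arguably more elementary, decoupling cleanly into ``geometric number of visits'' plus ``one-step bound on return probability''; the paper's has the minor advantage of packaging everything into a single inequality for $\phi$. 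Both need the reduction to $K=A$ via (\ref{1.21})\,i), which you correctly invoke (and which makes your worry about $P_{e_K}[H_A<\infty]\le\mathrm{cap}(A)$ moot, though in fact that inequality is an equality by a two-line reversibility computation with (\ref{1.8})).

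Your Step~3 meanders but lands correctly: the direct Chebyshev with parameter $\lambda$ cannot give exponent $-\lambda|A|$ for any $u>0$, and the fix is exactly what the paper does---apply (\ref{2.22}) with some $\wt\lambda>\lambda$ still satisfying $\chi(\wt\lambda)<1$ (the paper takes $1-\chi(\wt\lambda)=\tfrac12(1-\chi(\lambda))$), so that the exponent becomes $-|A|\big(\wt\lambda-\tfrac{u}{g(0)}\tfrac{e^{\wt\lambda}-1}{1-\chi(\wt\lambda)}\big)\le-\lambda|A|$ once $u\le u_1(d,m,\lambda)$. You should state this cleanly rather than circling. One small omission in Step~1: a priori $E_x[e^{\lambda f_A}]$ could be $+\infty$, so the geometric domination should first be established for a truncated $f_A$ and then passed to the limit (the paper notes this as ``a routine approximation argument'').
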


\begin{proof}
Consider $A \in \cA_m$, $F \in \cL_m$ containing $A$, then for $A \subseteq K \subseteq K^\prime \subset \subset \IZ^d$, we find that 
\begin{equation*}
\langle \mu_{K,u}, f_A \rangle \stackrel{(\ref{1.21}) {\rm i)}}{=} \langle \mu_{K^\prime,u}, f_A \circ \theta_{H_K} 1\{H_K < \infty\}\rangle = \langle \mu_{K^\prime,u}, f_A \rangle\,.
\end{equation*}

\medskip\n
So the left-hand side of (\ref{2.22}) does not depend on $K \subset \subset \IZ^d$ containing $A$. In particular picking $K = A$, we find that it equals
\begin{equation}\label{2.24}
\IE[\exp\{\lambda \langle \mu_{A,u}, f_A \rangle \}] \stackrel{(\ref{1.20}),(\ref{1.43})}{=} \exp\{u \,E_{e_A} [e^{\lambda f_A} -1]\}\,.
\end{equation}
Introducing the function
\begin{equation}\label{2.25}
\phi(x) = E_x [e^{\lambda f_A}], \;\mbox{for $x \in \IZ^d$}\,,
\end{equation}

\n
and writing $R_F \stackrel{\rm def}{=} T_F + H_F \circ \theta_{T_F}$, the return time to $F$, see (\ref{1.3}) for notation, we find:
\begin{align*}
e^{\lambda f_A} & \le e^{\lambda T_F} \big(1_{\{R_F = \infty\}} + 1_{\{R_F < \infty\}} \,e^{\lambda f_A} \circ \theta_{R_F}\big)
\\
& = e^{\lambda T_F} \big(1 +1_{\{R_F < \infty\}} (e^{\lambda f_A} \circ \theta_{R_F} - 1)\big)\,.
\end{align*}

\medskip\n
With the strong Markov property at times $R_F$ and then $T_F$, we thus obtain:
\begin{equation}\label{2.26}
\begin{split}
\phi(x) & \le E_x[e^{\lambda T_F}] + E_x\big[e^{\lambda T_F} P_{X_{T_F}}[H_F < \infty]\big] (\|\phi\|_\infty -1 )
\\[1ex]
&\hspace{-1ex} \stackrel{(\ref{2.20})}{=} E_x [e^{\lambda T_F}] \big(1 + q(d-m) (\|\phi\|_\infty - 1)\big)\,,
\end{split}
\end{equation}

\medskip\n
considering in the last step the motion of the walk in the components ``transversal'' to $F$. Note that when $z \notin F$, $T_F = 0$, $P_z$-a.s., whereas when $z \in F$, $T_F$ has geometric distribution with success probability $1 - \frac{m}{d}$. Hence with $\lambda$ satisfying (\ref{2.21}) we find that:\begin{equation}\label{2.27}
E_z [\exp\{\lambda T_F\}] = \dsl_{k \ge 1} \;\Big(1 - \mbox{\f $\dis\frac{m}{d}$}\Big) \Big( \mbox{\f $\dis\frac{m}{d}$}\Big)^{k-1} \,e^{\lambda k} = e^\lambda \Big(1 - \mbox{\f $\dis\frac{m}{d}$}\Big)\Big(1 - e^\lambda \; \mbox{\f $\dis\frac{m}{d}$}\Big)^{-1} \stackrel{\rm def}{=}\alpha \,.
\end{equation}
With a routine approximation argument of $f_A$ by a finite sum, to exclude the possibility that $\|\phi \|_\infty$ is infinite, and (\ref{2.26}) we see that:
\begin{equation*}
\|\phi\|_\infty \le\dis\frac{\alpha(1- q(d-m))}{1-q(d-m)\alpha} \;,
\end{equation*}
and hence
\begin{equation}\label{2.28}
\|\phi\|_\infty  - 1 \le \dis\frac{\alpha-1}{1 - q(d-m)\alpha} = \mbox{\f $\dis\frac{e^\lambda - 1}{1 - \chi(\lambda)}$}  \;.
\end{equation}

\medskip\n
Coming back to (\ref{2.24}), and using (\ref{1.7}) we find (\ref{2.22}). As for (\ref{2.23}), note with (\ref{1.6}), (\ref{1.62}) that
\begin{equation*}
{\rm cap} (A) \le \dsl_{x \in A} {\rm cap}(\{x\}) = \dis\frac{|A|}{g(0)} \;.
\end{equation*}

\n
Further on the event $\{\cI^u \supseteq A\}$ we have $\langle \mu_{A,u}, f_A \rangle \ge |A|$. So choosing $\wt{\lambda}(d,m,\lambda) > \lambda$, such that $1 - \chi(\wt{\lambda}) = \frac{1}{2} \;(1 - \chi(\lambda))$ we now see that for $A \in \cA_m$:
\begin{equation}\label{2.29}
\begin{split}
\IP[\cI^u \supseteq A] & \stackrel{(\ref{2.22})}{\le} \exp\Big\{ - \wt{\lambda} \,|A| + \dis\frac{|A|}{g(0)}\; u \;\dis\frac{e^{\wt{\lambda}} - 1}{ 1- \chi(\wt{\lambda})}\Big\}
\\
& \;\; \le \exp\{ - \lambda \,|A|\}\,,
\end{split}
\end{equation}

\n
if $u \le u_1(d,m,\lambda)$. This proves (\ref{2.23}).
\end{proof}

\begin{remark}\label{rem2.5} ~ \rm

\medskip\n
1) The proof of Theorem \ref{theo2.4} is very similar to the proofs of Theorem 2.1 of \cite{BenjSzni06} and Theorem 1.2 of \cite{DembSzni08}, however it has a somewhat more algebraic character due to the nature of the basic model we work with. 

\bigskip\n
2) There is no bound of type (\ref{2.23}) valid uniformly for $\cA_d$ the collection of subsets of $\IZ^d$. The argument is in essence the same as in Remark 2.4 2) of \cite{BenjSzni06}. One can for instance consider $A_L = B(0,L)$ and note that for large $L$, when the random walk starts in $A_L$, conditionally on not leaving $A_{2L}$ up to time $c\,L^d \log L$ (with $c$ a large enough constant), it covers $A_L$ with probability at least $\frac{1}{2}$, cf.~(2.33) of \cite{BenjSzni06}. From this it follows that for large $L$,
\begin{align*}
\IP[\cI^u \supseteq A_L] & \ge \IP[\mu_{A_{L},u} \not= 0] \;\fr \;\inf\limits_{x \in A_L} \;P_x[T_{A_{2L}} > c \,L^d \log L]
\\
& \ge c(1 - \exp\{ - u\, {\rm cap}(A_L)\}) \,\exp\{-c\,L^{d-2} \log L\}\,.
\end{align*}

\n
As a result no matter how small $u > 0$, one finds that
\begin{equation}\label{2.30}
\lim\limits_{L \r \infty} |A_L|^{-1} \log \IP[\cI^u \supseteq A_L] = 0\,.
\end{equation}

\n
3)  One can combine (\ref{2.23}) with a Peierls-type argument by considering the collection of $*$-nearest neighbor circuits separating $0$ from infinity in some $F \in \cL_2$ containing $0$, cf.~Corollary 2.5 of \cite{BenjSzni06} or Corollary 1.5 of \cite{DembSzni08}, and see that when $d$ satisfies
\begin{equation}\label{2.31}
7 \Big(\mbox{\f $\dis\frac{2}{d}$} + \Big( 1- \mbox{\f $\dis\frac{2}{d}$}\Big) \;q(d-2)\Big) < 1\,,
\end{equation} 
then for small $u > 0$, $\cV^u$ percolates i.e.
\begin{equation}\label{2.32}
\IP[{\rm Perc}(u)] = 1, \;\mbox{for small $u > 0$}\,.
\end{equation}

\medskip\n
The factor $7$ in (\ref{2.31}) simply stems from the fact that there are at most $8\, 7^{n-1}$ $*$-nearest neighbor circuits with $n$ steps in $\IZ^2$ that start at the origin. It is known that $q(\nu) \sim (2 \nu)^{-1}$, as $\nu \r \infty$, cf.~(5.4) of \cite{Mont56}, and hence (\ref{2.31}) holds for large $d$. Clearly (\ref{2.31}) forces $d > 14$, and with the help of tables of values for $q(\cdot)$, one can see that in effect (\ref{2.31}) holds exactly when $d \ge 18$, cf.~Remark 2.1 of \cite{DembSzni08}. In section 4 we will show that (\ref{2.32}) holds when $d \ge 7$. \hfill $\square$
\end{remark}

 \section{Absence of percolation for large $u$}
 \setcounter{equation}{0}
 
The principal object of this section is to show in Theorem \ref{theo3.5} that when $d \ge 3$, for large enough $u$, $\IP$-almost surely all connected components of $\cV^u$ are finite. We know from Remark \ref{rem1.6} 1) or (\ref{1.57}) that in general $\IP[\cV^u \supseteq A]$ does not decay exponentially with $|A|$. This creates an obstruction to the classical Peierls-type argument, which is used in the context of Bernoulli percolation. It substantially complicates the matter. The strategy of the proof we present here is instead based on a renormalization argument. We establish in Proposition \ref{prop3.1} key estimates on the probability of existence of certain crossings at scale $L_n$ in $\cV^{u_n}$, cf.~(\ref{3.7}), (\ref{3.8}), on an increasing sequence of length scales $L_n$ and an increasing but bounded sequence of values $u_n$. The proof of Proposition \ref{prop3.1} uses a recurrence propagating certain controls, cf.~(\ref{3.10}), from one scale to the next along a sequence of level-values as in (\ref{3.9}). Once Proposition \ref{prop3.1} is established it is a simple matter to deduce Theorem \ref{theo3.5}. We will now introduce some notation.
 
 \medskip
 We consider the positive number $a$ and an integer $L_0$:
 \begin{equation}\label{3.1}
 a = \mbox{\f $\dis\frac{1}{100d}$}, \;L_0 > 1\,.
 \end{equation}
 
 \medskip\n
 We then define an increasing sequence of length scales via
 \begin{equation}\label{3.2}
 L_{n+1} = \ell_n\,L_n, \;\mbox{where $\ell_n = 100[L^a_n] (\ge L^a_n)$, for $n \ge 0$}\,,
 \end{equation}
 
 \medskip\n
 so that $L_n, n \ge 0$, quickly grows to infinity:
 \begin{equation}\label{3.3}
 L_n \ge L_0^{(1+a)^n}, \;\mbox{for $n \ge 0$}\,.
 \end{equation}
 
\medskip\n
 We organize $\IZ^d$ in a hierarchical way with $L_0$ corresponding to the bottom scale and $L_1 < L_2 < \dots$ representing coarser and coarser scales. For this purpose, given $n \ge 0$, we consider the set of labels at level $n$:
 \begin{equation}\label{3.4}
I_n = \{n\} \times \IZ^d\,.
 \end{equation}
 
\medskip\n
  To each label at level $n$, $m = (n,i) \in I_n$, we associate the boxes:
 \begin{equation}\label{3.5}
 \begin{split}
 C_m & = \Big(iL_n + [0, L_n)^d \Big) \cap \IZ^d\,,
 \\
 \wt{C}_m & =  \bigcup\limits_{m^\prime \in I_n: d(C_{m^\prime}, C_m) \le 1} C_{m^\prime}\,,
 \end{split}
 \end{equation}
 
 \medskip\n
where we refer to the notation above (\ref{1.1}).  It is straightforward to see that $C_m, m \in I_n$, is a partition of $\IZ^d$ into boxes of side-length $L_n - 1$, and $\wt{C}_m$ simply stands for the union of $C_m$ and its ``$*$-neighboring'' boxes of level $n$. Also when $m \in I_{n+1}$, then $C_m$ is the disjoint union of the $\ell_n^d$ boxes $C_{\ov{m}}$ at level $n$ it contains. We denote with $\wt{S}_m$ the interior boundary of $\wt{C}_m$:
  \begin{equation}\label{3.6}
 \mbox{$\wt{S}_m = \partial_{\rm int} \wt{C}_m$, for $m \in I_n, \,n \ge 0$}\,.
 \end{equation}
 
 \medskip\n
 In what follows we investigate the probability of the existence of certain vacant crossings defined for $u \ge 0, n \ge 0, m \in I_n$, via:
  \begin{equation}\label{3.7}
 \mbox{$A^u_m = \{\o \in \Omega$; there is a nearest neighbor path in $\cV^u(\o) \cap \wt{C}_m
 $ from $C_m$ to $\wt{S}_m\}$}\,.
 \end{equation}
 
\n
 It follows from translation invariance, cf.~Theorem \ref{theo2.1} or (\ref{1.48}) that
 \begin{equation}\label{3.8}
 p_n(u) = \IP[A^u_m], \;u \ge 0, \,n \ge 0, \; \mbox{with} \;m \in I_n\,,
 \end{equation}
 
\medskip\n
 is well-defined, i.e. does not depend on which $m \in I_n$ enters the right-hand side. Clearly the functions $p_n(\cdot)$ are non-increasing on $\IR_+$. Our main task consists in the derivation of recurrence relations on the functions $p_n(\cdot)$. The key control is provided by the following
 
 \begin{proposition}\label{prop3.1} $(d \ge 3$)
 
 \medskip
There exist positive constants $c_1,c_2$, cf.~{\rm (\ref{3.34}), (\ref{3.52})}, such that defining for $u_0 > 0$ and $r \ge 1$ integer
\begin{equation}\label{3.9}
u_n = u_0 \prod\limits_{0 \le n^\prime < n} (1 + c_1 \,\ell_{n^\prime}^{-(d-2)})^{r+1}, \;\mbox{for $n \ge 0$}\,,
\end{equation}

\medskip\n
then for $L_0 \ge c$, $u_0 \ge c(L_0), \;r \ge c(L_0,u_0)$, one has
\begin{equation}\label{3.10}
c_2 \,\ell_n^{2(d-1)} p_n(u_n) \le L^{-1}_n, \;\mbox{for all $n \ge 0$} \,.
\end{equation}
 \end{proposition}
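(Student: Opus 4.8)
The plan is to prove (\ref{3.10}) by induction on $n$, the induction step being a renormalization inequality that passes from scale $L_n$ to scale $L_{n+1}$ and has the schematic form $p_{n+1}(u_{n+1})\le c\,\ell_n^{2(d-1)}\big(p_n(u_n)^2+{\rm err}_n\big)$, with ${\rm err}_n$ a small correction produced by the sprinkling step below. For the base case $n=0$ one first fixes $L_0\ge c$ and then takes $u_0\ge c(L_0)$: a vacant crossing of $\wt C_m$ ($m\in I_0$) forces one of the at most $cL_0^{d-1}$ sites of $\wt S_m$ to lie in $\cV^{u_0}$, so by (\ref{1.58}) one has $p_0(u_0)\le cL_0^{d-1}\exp\{-u_0/g(0)\}$, hence $c_2\,\ell_0^{2(d-1)}p_0(u_0)\le L_0^{-1}$ once $u_0$ is large enough depending on $L_0$. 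Note also that the sequence $(u_n)$ in (\ref{3.9}) is bounded, since $\sum_n\ell_n^{-(d-2)}\le\sum_nL_0^{-a(d-2)(1+a)^n}<\infty$, so $u_n\uparrow u_\infty<\infty$; this is what makes the whole scheme worthwhile.

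Granting the recurrence, the induction propagates: assuming $p_n(u_n)\le L_n^{-1}/(c_2\,\ell_n^{2(d-1)})$ and using $\ell_{n+1}\le c L_{n+1}^a=c(\ell_nL_n)^a$ together with $L_{n+1}=\ell_nL_n$, the contribution of the $p_n(u_n)^2$ term to $c_2\,\ell_{n+1}^{2(d-1)}p_{n+1}(u_{n+1})$ is bounded by a constant times $\ell_n^{(2a-2)(d-1)+1}L_n^{2a(d-1)-1}$ times $L_{n+1}^{-1}$; with $a=\frac1{100d}$ both exponents are negative for $d\ge3$, so since $\ell_n\ge100$ and $L_n\ge L_0$ this is at most $\frac12L_{n+1}^{-1}$ provided $L_0\ge c$. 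The contribution of ${\rm err}_n$ is likewise made $\le\frac12L_{n+1}^{-1}$ by choosing the integer $r$ large, depending on $L_0$ and $u_0$ --- this is the role of $r$ in (\ref{3.9}).

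The geometric input producing the factor $\ell_n^{2(d-1)}$ is elementary: if $A^u_m$ occurs for $m=(n+1,i)$, the nearest neighbor vacant path realizing the crossing of $\wt C_m$ from $C_m$ to $\wt S_m$ has $\ell^\infty$-diameter of order $L_{n+1}$, so it must visit two scale-$n$ boxes $C_{m_1},C_{m_2}$ with $d(\wt C_{m_1},\wt C_{m_2})$ of order $L_{n+1}$ (in particular $\wt C_{m_1}\cap\wt C_{m_2}=\emptyset$), and the portion of the path between the first entrance in $C_{m_j}$ and the first subsequent exit from $\wt C_{m_j}$ realizes $A^u_{m_j}$. There are at most $c\,\ell_n^{2(d-1)}$ such pairs $(m_1,m_2)$ inside $C_m$, whence $p_{n+1}(u_{n+1})\le c\,\ell_n^{2(d-1)}\sup_{(m_1,m_2)}\IP[A^{u_{n+1}}_{m_1}\cap A^{u_{n+1}}_{m_2}]$.

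The heart of the matter, and the step I expect to be the main obstacle, is bounding $\IP[A^{u_{n+1}}_{m_1}\cap A^{u_{n+1}}_{m_2}]$ by essentially $p_n(u_n)^2$. Because of the long range dependence expressed in (\ref{1.67a}) and (\ref{2.15}), these events are far from independent: the direct decorrelation estimate only gives a correction of order $u\,{\rm cap}(\wt C_{m_1})^2/d(\wt C_{m_1},\wt C_{m_2})^{d-2}$, which is of order $L_n^{d-2}\ell_n^{-(d-2)}$ and diverges with $n$. I would instead argue by sprinkling. Decompose $\mu_{\wt C_{m_1}\cup\wt C_{m_2},\,u_{n+1}}$ as in (\ref{2.8})--(\ref{2.11}) into trajectories meeting only $\wt C_{m_1}$, only $\wt C_{m_2}$, or both; for box $\wt C_{m_j}$ retain only the excursions coming from trajectories that avoid the other box and, more generally, delete the trajectory pieces coming from excursions out to distance of order $L_{n+1}$, and let $\wh A_{m_j}$ be the crossing event computed from this thinned configuration. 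Then $\wh A_{m_1}$ and $\wh A_{m_2}$ are built from disjoint, hence independent, families of trajectories, while $A^{u_{n+1}}_{m_j}\subseteq\wh A_{m_j}$ since only trajectory pieces have been removed, which can only enlarge the vacant set; therefore $\IP[A^{u_{n+1}}_{m_1}\cap A^{u_{n+1}}_{m_2}]\le\IP[\wh A_{m_1}]\,\IP[\wh A_{m_2}]$. By (\ref{1.8}) and the Green function bound, the pieces removed for $\wt C_{m_j}$ carry relative weight at most $c\,{\rm cap}(\wt C_{m_j})/L_{n+1}^{d-2}\le c\,\ell_n^{-(d-2)}$ per excursion; capping the number of these excursions at $r$ --- the probability that a relevant trajectory makes more than $r$ of them being a power of $\ell_n^{-1}$ times ${\rm cap}(\wt C_m)$, which enters ${\rm err}_n$ and is killed by the fast growth of $L_n$ once $r\ge c(L_0,u_0)$ --- one can dominate the retained excursion process into $\wt C_{m_j}$ from below, as Poisson intensities, by an independent level-$u_n$ excursion process into $\wt C_{m_j}$, precisely because $u_{n+1}\ge u_n(1+c_1\ell_n^{-(d-2)})^{r+1}$. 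Hence $\IP[\wh A_{m_j}]\le p_n(u_n)$ up to the error term, and the recurrence follows. The delicate points are the exact bookkeeping of the removed pieces inside the Poisson decomposition, checking that the thinned process genuinely dominates the level-$u_n$ process in each box, and keeping every numerical exponent on the favourable side --- which is exactly why the deliberately slow growth rate $a=\frac1{100d}$ is used.
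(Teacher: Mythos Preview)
Your outline is correct and follows the same renormalization strategy as the paper. Two minor differences are worth noting. First, you decouple the two boxes symmetrically, bounding $\IP[A^{u_{n+1}}_{m_1}\cap A^{u_{n+1}}_{m_2}]\le\IP[A_{m_1}(\mu_{1,1})]\,\IP[A_{m_2}(\mu_{2,2})]$ and then invoking the sprinkling argument for \emph{each} factor; the paper instead observes that $A^u_{\ov m_1}=A_{\ov m_1}(\mu_{1,1}+\mu_{1,2}+\mu_{2,1})$ exactly (since $\mu_{2,2}$ misses $\wt C_{\ov m_1}$), and this event is independent of $\mu_{2,2}$, so one factor is $p_n(u)$ outright and sprinkling is needed only for the other --- a small economy yielding the recurrence $p_{n+1}(u)\le c_2\,\ell_n^{2(d-1)}p_n(u)\big(p_n(u')+u'c_3^rL_n^{(d-2)(1-ar)}\big)$. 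Second, you handle the $u_n$-dependence of the error term through the a-priori bound $u_n\le u_\infty<\infty$, whereas the paper propagates an auxiliary inductive hypothesis $u_n\le L_n^{(d-2)ar/2}$ alongside (\ref{3.10}); either works.

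The substantive content behind your phrase ``dominate the retained excursion process \dots\ from below, as Poisson intensities'' is a \emph{pointwise} comparison of excursion laws --- not merely total mass --- and this is where the real work lies. In the paper this is Lemma~3.3, whose proof decomposes each trajectory into its successive excursions into $\wt C_{\ov m_2}$ and out of a box $U$ of scale $L_{n+1}$, and uses a Harnack inequality (Lemma~3.2, specifically (\ref{3.32})) to show that conditioning a transition from $\partial U$ back to $\wt C_{\ov m_2}$ on visiting or not visiting $\wt C_{\ov m_1}$ alters the resulting entrance law only by a factor $1+O(\ell_n^{-(d-2)})$. Your sketch correctly identifies this as the delicate step; the Harnack ingredient is what makes the ``relative weight $\le c\,\ell_n^{-(d-2)}$ per excursion'' statement hold at the level of measures on $W_f^{\times\ell}$, as required for stochastic domination.
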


\medskip
 \begin{proof}
In the course of the proof of Proposition \ref{prop3.1}, we will use the expression ``for large $L_0$'', in place of ``for $L_0 \ge c$'', with $c$ a positive constant as explained at the end of the Introduction. We first consider $n \ge 0$, $m \in I_{n+1}$, as well as $0 < u^\prime < u$. We are first going to bound $p_{n+1}(u)$ in terms of $p_n(u^\prime)$, when $\frac{u^\prime}{u}$ is sufficiently away from 1, cf.~(\ref{3.45}), (\ref{3.52}).

\medskip
We write $\cH_1$ for the collection of labels at level $n$ of boxes contained in $C_m$ touching $\partial_{\rm int} C_m$:
 \begin{equation}\label{3.11}
 \cH_1  = \mbox{$\{\ov{m} \in I_n; \;C_{\ov{m}} \subseteq C_m$ and $C_{\ov{m}} \cap \partial_{\rm int} C_m \not= \phi\}$},  
 \end{equation}
as well as
\begin{equation}\label{3.12}
 \cH_2  = \Big\{\ov{m} \in I_n; \; C_{\ov{m}} \cap \Big\{z\in \IZ^d: d(z, C_m) = \mbox{\f $\dis\frac{L_{n+1}}{2}\Big\}$} \not= \phi\Big\}\,, 
 \end{equation}
 
\medskip\n
 for the collection of labels of $n$-level boxes containing some point at $|\cdot|_\infty$-distance $\frac{L_{n+1}}{2}$ from $C_m$ (with a similar notation as above (\ref{1.1})).
 
 \medskip
\begin{center}
\psfragscanon
\includegraphics{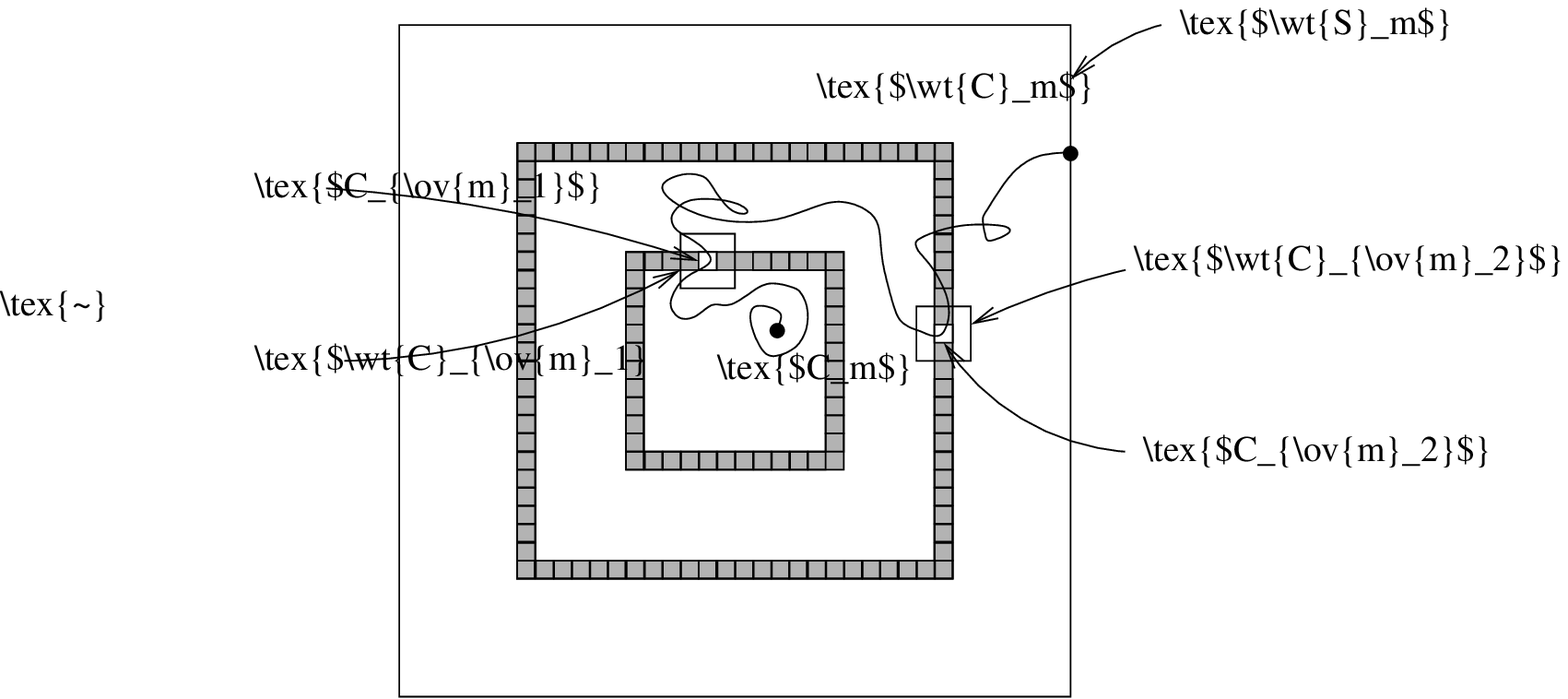}
 \end{center}

\begin{center}
 Fig.~1: ~A schematic illustration of the event $A^u_m$. The path drawn lies in $\cV^u$.
 \end{center}

\n
Observe that any nearest neighbor path in $\cV^u$ originating in $C_m$ and ending in $\wt{S}_m$ must go through some $C_{\ov{m}_1}, \ov{m}_1 \in \cH_1$, reach $\wt{S}_{\ov{m}_1}$, and then go through some $C_{\ov{m}_2}$, $\ov{m}_2 \in \cH_2$, and reach $\wt{S}_{\ov{m}_2}$. Therefore we see that
 \begin{equation}\label{3.13}
 p_{n+1}(u) \le \dsl_{\ov{m}_1 \in \cH_1, \ov{m}_2 \in \cH_2} \IP[A^u_{\ov{m}_1} \cap A^u_{\ov{m}_2}] \le c \,\ell_n^{2(d-1)} \sup\limits_{\ov{m}_1 \in \cH_1, \ov{m}_2 \in \cH_2} \IP[A^u_{\ov{m}_1} \cap A^u_{\ov{m}_2}]\,,
 \end{equation}
 
\n
 using a rough counting argument to bound $|\cH_1|$ and $|\cH_2|$ in the last step. We will now focus our attention on the probability which appears in the last member of (\ref{3.13}). We write $V = \wt{C}_{\ov{m}_1} \cup \wt{C}_{\ov{m}_2}$ for given $\ov{m}_1 \in \cH_1$, $\ov{m}_2 \in \cH_2$, and just as in (\ref{2.8}) introduce the decomposition
 \begin{equation}\label{3.14}
 \mu_{V,u} = \mu_{1,1} + \mu_{1,2} + \mu_{2,1} + \mu_{2,2}\,,
 \end{equation}
 
 \medskip\n
 where $\wt{C}_{\ov{m}_1}$, $\wt{C}_{\ov{m}_2}$ respectively play the role of $K$ and $K+x$ in (\ref{2.8}). In particular $\mu_{i,j}$, $1 \le i, j \le 2$, are independent Poisson point processes on $W_+$ with intensity measures $\gamma_{i,j}$, $1 \le i, j \le 2$, as in (\ref{2.10}). The following notation will be convenient. When $\Lambda$ is a random point process on $W_+$ defined on $\Omega$, i.e. a measurable map from $\Omega$ into the space of pure point measures on $W_+$, we denote with $A_{\ov{m}}(\Lambda)$, for $\ov{m} \in I_n$, the event:
\begin{align}
A_{\ov{m}}(\Lambda) =  \big\{\o \in \Omega; &\; \mbox{there is a nearest neighbor path in $\wt{C}_{\ov{m}} \backslash \big(\bigcup\limits_{w \in {\rm Supp}(\Lambda(\o))} w(\IN)\big)$}\label{3.15}
\\[-2ex]
&\;\mbox{from $C_{\ov{m}}$ to $\wt{S}_{\ov{m}}\big\}$} \,.\nonumber
\end{align}
 
 \medskip\n
 For instance with (\ref{1.54}) we see that for any $\ov{m} \in I_n$:
 \begin{equation}\label{3.16}
 A^u_{\ov{m}} = A_{\ov{m}} (\mu_{K,u}), \;\mbox{for any $K \supseteq \wt{C}_{\ov{m}}$}\;.
 \end{equation}
 
\n
We can apply this identity to $\ov{m} = \ov{m}_i$, $i = 1,2$, with $K = V$. Noting that $w \in {\rm Supp}\,\mu_{2,2}$ implies $w(\IN) \cap \wt{C}_{\ov{m}_1} = \phi$, we find that
 \begin{align*}
 A^u_{\ov{m}_1} \cap A^u_{\ov{m}_2} & = A_{\ov{m}_1} (\mu_{V,u}) \cap A_{\ov{m}_2} (\mu_{V,u})
 \\
 & = A_{\ov{m}_1} (\mu_{1,1} + \mu_{1,2} + \mu_{2,1}) \cap A_{\ov{m}_2} (\mu_{V,u})
 \\
 &\subseteq A_{\ov{m}_1}(\mu_{1,1} + \mu_{1,2} + \mu_{2,1}) \cap A_{\ov{m}_2} (\mu_{2,2})\,.
 \end{align*}
 
\medskip \n
 With the help of the independence properties mentioned above we find that
 \begin{equation}\label{3.17}
 \begin{split}
 \IP\big[A^u_{\ov{m}_1} \cap A^u_{\ov{m}_2}\big] & \le \IP\big[A_{\ov{m}_1} (\mu_{1,1} + \mu_{1,2} + \mu_{2,1})\big] \;\IP\big[A_{\ov{m}_2} (\mu_{2,2})\big]
 \\[0.5ex]
 & = p_n(u) \;\IP\big[A_{\ov{m}_2} (\mu_{2,2})\big]\,.
 \end{split}
 \end{equation}
 
 \n
 Our next task is to bound $\IP[A_{\ov{m}_2}(\mu_{2,2})]$ from above. For this purpose we decompose the $\mu_{i,j}$,  $1 \le i, j \le 2$, in (\ref{3.14}) into
 \begin{equation}\label{3.18}
 \mu_{i,j} = \mu^\prime_{i,j} + \mu^*_{i,j} \;,
 \end{equation}
 
 \n
 where the $\mu^\prime_{i,j}$,  $\mu^*_{i,j}$, $ 1 \le i, j \le 2$, are independent Poisson point processes on $W_+$, with $\mu^\prime_{i,j}$ defined as $\mu_{i,j}$, with $u^\prime ( < u)$ replacing $u$ in (\ref{3.14}), and $\mu^*_{i,j}$ defined analogously as in (\ref{3.14}), but with the role of $\mu_{V,u}(dw)$ replaced by $\mu_V(dw \times (u^\prime,u])$, cf.~(\ref{1.18}), (\ref{1.20}), (\ref{1.45}), which is also a Poisson point process on $W_+$. we write $\gamma^\prime_{i,j}$ and $\gamma^*_{i,j}$, $1 \le i,j \le 2$, for the intensity measures of these point processes, and note that
 \begin{equation}\label{3.19}
 \gamma^*_{2,2}(dw) = (u - u^\prime) \,1\big\{X_0 \in \wt{C}_{\ov{m}_2}, \;H_{\wt{C}_{\ov{m}_1}} = \infty\big\} \,P_{e_V}(dw) \,.
 \end{equation}
 
 \n
 Our aim is to bound from above $\IP[A_{\ov{m}_2}(\mu_{2,2})] = \IP[A_{\ov{m}_2}(\mu^\prime_{2,2} + \mu^*_{2,2})]$ in terms of quantities involving $p_n(u^\prime) = \IP[A_{\ov{m}_2} (\mu^\prime_{2,2} + \mu^\prime_{2,1} + \mu^\prime_{1,2})]$. The rough idea is to try to dominate the influence on $\wt{C}_{\ov{m}_2}$ of $\mu^\prime_{2,1} + \mu^\prime_{1,2}$ by that of $\mu^*_{2,2}$. This is a kind of ``sprinkling technique'' where the discrepancy between $u$ and $u^\prime$ in the form of $\mu^*_{2,2}$ is used to dominate the long range interaction reflected by $\mu^\prime_{2,1} + \mu^\prime_{1,2}$.

\medskip
With this in mind we introduce an integer $r \ge 1$, and further decompose $\mu^\prime_{2,1}$, $\mu^\prime_{1,2}$ and $\mu^*_{2,2}$ into:

\vspace{-5ex}
\begin{equation}\label{3.20}
\begin{split}
\mu^\prime_{2,1} & = \dsl_{1 \le \ell \le r} \rho^\ell_{2,1} + \ov{\rho}_{2,1}, \;\mu^\prime_{1,2} = \dsl_{1 \le \ell \le r} \rho^\ell_{1,2} + \ov{\rho}_{1,2}\,,
\\[1ex]
\mu^*_{2,2} & = \dsl_{1 \le \ell \le r} \rho^\ell_{2,2} + \ov{\rho}_{2,2} \,,
\end{split}
\end{equation} 

\n
where denoting with $R_k, D_k, k \ge 1$, the successive returns to $\wt{C}_{\ov{m}_2}$ and departures from $U = \{z \in \IZ^d; d(z, \wt{C}_{\ov{m}_2}) \le \frac{1}{10} \;L_{n+1}\}$, cf.~(\ref{1.4}) and the notation above (\ref{1.1}), we have set for $1 \le i \not= j \le 2$, $\ell \ge 1$,
\begin{equation}\label{3.21}
\begin{array}{ll}
\rho^\ell_{i,j} \, = 1\{R_\ell < D_\ell  < R_{\ell + 1} = \infty\} \,\mu^\prime_{i,j}, & \ov{\rho}_{i,j} = 1\{R_{r+1} < \infty\} \,\mu^\prime_{i,j}
\\[1ex]
\rho^\ell_{2,2}  = 1\{R_\ell < D_\ell < R_{\ell + 1} = \infty\} \,\mu^*_{2,2}, & \ov{\rho}_{2,2} = 1\{R_{r+1} < \infty\} \,\mu^*_{2,2}\,,
\end{array}
\end{equation} 
(note that $\{R_1 < D_1 < \infty\}$ has full measure under each of $\mu^\prime_{2,1}$, $\mu^\prime_{1,2}$ and $\mu^*_{2,2}$). 

\bigskip
We then see that with the above definitions and the independence property mentioned below (\ref{3.18}),
\begin{equation}\label{3.22}
\begin{array}{l}
\mbox{$\mu^\prime_{2,2}$, $\rho^\ell_{i,j}$, $1 \le \ell \le r$, $\ov{\rho}_{i,j}$, $1 \le i,j \le 2$, with $i$ or $j \not= 1$, are independent}
\\[1ex]
\mbox{Poisson point processes on $W_+$}\,.
\end{array}
\end{equation}

\n
Letting $\ov{\xi}_{2,1}$ and $\ov{\xi}_{1,2}$ stand for the respective intensity measures on $W_+$ of $\ov{\rho}_{2,1}$ and $\ov{\rho}_{1,2}$, we have
\begin{equation}\label{3.23}
\begin{array}{lcl}
\ov{\xi}_{2,1}(W_+) & \hspace{-3ex}= & \hspace{-3ex}u^\prime\,P_{e_V} [X_0 \in \wt{C}_{\ov{m}_2}, \; H_{\wt{C}_{\ov{m}_1}} < \infty, \;R_{r+1} < \infty]
\\
& \hspace{-3ex}\stackrel{(\ref{1.6}), (\ref{1.7})}{\le} & \hspace{-3ex} u^\prime \,{\rm cap}  (\wt{C}_{\ov{m}_2}) \sup\limits_{x \in \wt{C}_{\ov{m}_2}} P_x [R_{r+1} < \infty]\,
\\
& \hspace{-3ex} \le &  \hspace{-3ex}u^\prime\,{\rm cap}(\wt{C}_{\ov{m}_2}) (\sup\limits_{x \in U^c} \,P_x[H_{\wt{C}_{\ov{m}_2}} < \infty])^r\,,
\end{array}
\end{equation}

\n
where we used the strong Markov property at times $D_r, D_{r-1}, \dots,D_1$, in the last step. With the right-hand inequality of (\ref{1.9}) as well as \cite{Lawl91}, p.~31, we thus find that:
\begin{equation*}
\sup\limits_{x \in U^c} \,P_x[H_{\wt{C}_{\ov{m}_2}} < \infty] \le c\,L^{-(d-2)}_{n+1} \;\dis\frac{L^d_n}{L^2_n} \stackrel{(\ref{3.2})}{=} c\,\ell_n^{-(d-2)}\,,
\end{equation*}
and hence
\begin{equation}\label{3.24}
\ov{\xi}_{2,1}(W_+)  \le  u^\prime \,{\rm cap}(\wt{C}_{\ov{m}_2}) (c\,\ell_n^{-(d-2)})^r  \le u^\prime \,c^r \,L_n^{(d-2) - a(d-2)r}\,,
\end{equation}

\bigskip\n
where in the last step we used (\ref{3.2}) as well as the right-hand inequality of the standard capacity estimate:
\begin{equation*}
c\,L^{(d-2)} \le {\rm cap}(B(0,L)) \le c^\prime L^{(d-2)}, \;\mbox{for $L \ge 1$} \,,
\end{equation*}

\medskip\n
(which for instance follows from (\ref{1.8}), (\ref{1.9}) with $K = B(0,L)$, letting $x$ tend to infinity in (\ref{1.8}) and using (\ref{1.9}) to bound $c\,|x|^{-(d-2)}$ cap$(B(0,L)) \sim P_x[H_{B(0,L)} < \infty]$, for \linebreak  $|x| \r \infty$). In a similar way we find that
\begin{equation}\label{3.25}
\begin{split}
\ov{\xi}_{1,2}(W_+) & =  u^\prime\,P_{e_V} \,[X_0 \in \wt{C}_{\ov{m}_1}, \;H_{\wt{C}_{\ov{m}_2}} < \infty, \,R_{r+1} <  \infty]
\\[1ex]
& \le u^\prime \,c^r \,L_n^{(d-2) - a(d-2)r}\,.
\end{split}
\end{equation}

\n
We will now seek to show that the trace left on $\wt{C}_{\ov{m}_2}$ by paths in the supports of $\mu^\prime_{2,1} - \ov{\rho}_{2,1} = \sum_{1 \le \ell \le r} \,\rho^\ell_{2,1}$ and $\mu^\prime_{1,2} - \ov{\rho}_{1,2} = \sum_{1 \le \ell \le r} \,\rho^\ell_{1,2}$ is dominated by the corresponding trace of paths in the support of $\mu^*_{2,2}$. The point processes $\ov{\rho}_{2,1}$ and $\ov{\rho}_{1,2}$ are then viewed as correction terms to be controlled with the help of (\ref{3.24}), (\ref{3.25}).

\medskip
With this perspective we consider the space $W_f$ of finite nearest neighbor paths on $\IZ^d$, and for $\ell \ge 1$, the measurable map $\phi^\ell$ from $\{D_\ell < R_{\ell + 1} = \infty\} \subseteq W_+$ into the product space $W^{\times \ell}_f$ defined by:
\begin{equation}\label{3.26}
\phi^\ell(w) = (w(R_k + \cdot)_{0 \le \point \le D_k - R_k})_{1 \le k \le \ell} \in W_f^{\times \ell} \;\mbox{for} \; w \in \{D_\ell < R_{\ell + 1} = \infty\}\,.
\end{equation}

\bigskip\n
In other words $\phi^\ell(w)$ for $w$ in the above event keeps track of the $\ell$ portions of the trajectory $w$ corresponding to times going from the successive returns to $\wt{C}_{\ov{m}_2}$ up to departure from $U$. We can view the various $\rho^\ell_{i,j}$, $i$ or $j \not= 1$, with $\ell \ge 1$ fixed as point processes on $\{D_\ell < R_{\ell + 1} = \infty\} (\subseteq W_+)$. We then denote with $\wt{\rho}^{\,\ell}_{i,j}$ their respective images under $\phi^\ell$, which are Poisson point processes on $W^{\times \ell}_f$. We write $\wt{\xi}^\ell_{i,j}$ for their corresponding intensity measures. As a result of (\ref{3.22}), we see that
\begin{equation}\label{3.27}
\begin{array}{l}
\mbox{$\mu^\prime_{2,2}, \wt{\rho}^{\ell}_{i,j}, 1 \le \ell \le r, \ov{\rho}_{i,j}, 1 \le i, j \le 2, i$ or $j \not= 1$ are independent}
\\[1ex]
\mbox{Poisson point processes}\,.
\end{array}
\end{equation}

\medskip\n
We will see that when $u^\prime < u$ are sufficiently far apart, cf.~(\ref{3.34}), $\wt{\rho}^{\,\ell}_{2,2}$ has an intensity measure on $W^{\times \ell}_f$ which is bigger than the intensity measure of $\wt{\rho}^{\,\ell}_{2,1} + \wt{\rho}^{\,\ell}_{1,2}$, for $1 \le \ell \le r$. The following lemma will be helpful, we refer to (\ref{3.11}), (\ref{3.12}) and below (\ref{3.20}) for the notation.

\begin{lemma}\label{lem3.2}
For large $L_0$, for all $n \ge 0, m \in I_{n+1}, \ov{m}_1 \in \cH_1, \ov{m}_2 \in \cH_2, x \in \partial U, y \in \partial_{\rm int} \, \wt{C}_{\ov{m}_2}$, we have:
\begin{align}
&P_x[H_{\wt{C}_{\ov{m}_1}} < R_1 < \infty, X_{R_1} = y ] \le c\,\ell^{-(d-2)}_n \,P_x[H_{\wt{C}_{\ov{m}_1}} > R_1, X_{R_1} = y]\label{3.28}
\\[2ex]
&P_x[H_{\wt{C}_{\ov{m}_1}} < \infty, R_1 = \infty] \le c\,\ell_n^{-(d-2)} \,P_x[R_1 = \infty = H_{\wt{C}_{\ov{m}_1}}]\,.\label{3.29}
\end{align}
\end{lemma}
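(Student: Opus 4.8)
\noindent
The plan is to reduce both (\ref{3.28}) and (\ref{3.29}) to a short list of crude hitting--probability bounds, read off from (\ref{1.8}), (\ref{1.9}) and the classical Green function and capacity estimates, supplemented by one slightly less elementary ingredient: the comparability, up to a multiplicative constant depending only on $d$, of the entrance distribution of the box $\wt C_{\ov m_2}$ as seen from a far--away point with its normalized equilibrium measure $\ov e_{\wt C_{\ov m_2}} := e_{\wt C_{\ov m_2}}/{\rm cap}(\wt C_{\ov m_2})$. I expect this last input to be the real obstacle, since it is what lets (\ref{3.28}) keep track of the precise entrance point $y$ rather than merely the total hitting probability; everything else is strong Markov bookkeeping that exploits the ample separation $d(\wt C_{\ov m_1},\wt C_{\ov m_2})$ of order $L_{n+1}$ which is built into the definitions of $\cH_1$ and $\cH_2$.

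\medskip\noindent
First I would record the geometry. Since $\ov m_1 \in \cH_1$, the set $\wt C_{\ov m_1}$ is contained in the $L_n$--neighborhood of $C_m$ and has $|\cdot|_\infty$--diameter at most $3 L_n$; since $\ov m_2 \in \cH_2$ and ${\rm diam}\, C_{\ov m_2} < L_n$, the set $\wt C_{\ov m_2}$ lies in the shell $\{z;\, \frac{L_{n+1}}{2} - L_n \le d(z,C_m) \le \frac{L_{n+1}}{2} + L_n\}$. Hence for large $L_0$ one has $d(\wt C_{\ov m_1},\wt C_{\ov m_2}) \ge \frac14 L_{n+1}$, so that $\wt C_{\ov m_1} \cap U = \phi$ and, for $x \in \partial U$,
\[
\frac{1}{10}\, L_{n+1} < d(x, \wt C_{\ov m_2}) \le \frac{1}{10}\, L_{n+1} + 1, \qquad d(x, \wt C_{\ov m_1}) \ge \frac{1}{10}\, L_{n+1}\,.
\]
Writing $D = \wt C_{\ov m_2}$ and $D' = \wt C_{\ov m_1}$, and using $L_n/L_{n+1} = \ell_n^{-1}$ (cf.~(\ref{3.2})), (\ref{1.8}), (\ref{1.9}), the Green function bounds of \cite{Lawl91}, p.~31, and ${\rm cap}(D),\,{\rm cap}(D') \asymp L_n^{d-2}$, this gives, for all $x \in \partial U$ and $z \in D'$,
\[
P_x[H_{D'} < \infty] \le c\, \ell_n^{-(d-2)}, \qquad P_z[H_D < \infty] \le c\, \ell_n^{-(d-2)}, \qquad P_x[H_D < \infty] \ge c^{-1}\, \ell_n^{-(d-2)}\,.
\]
Since in addition $D$ is a box of diameter at most $3L_n$ observed from distance $\ge \frac{1}{10}L_{n+1} \gg L_n$, the standard estimate on the harmonic measure of a set viewed from far away (see \cite{Lawl91}) should furnish a constant $c_0 = c_0(d) \ge 1$, not depending on $n$, with
\[
c_0^{-1}\, \ov e_D(y) \ \le\ P_w[X_{H_D} = y \mid H_D < \infty] \ \le\ c_0\, \ov e_D(y), \qquad y \in \partial_{\rm int}\, D, \ \ d(w, D) \ge \tfrac{1}{10}\, L_{n+1}\,,
\]
and both $x \in \partial U$ and every $z \in D'$ satisfy $d(\cdot, D) \ge \frac{1}{10}L_{n+1}$.

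\medskip\noindent
For (\ref{3.28}) I would then use that $R_1 = H_D$ and that, since $D \cap D' = \phi$, on the event $\{H_{D'} < R_1,\, X_{R_1} = y\}$ the walk has not met $D$ before time $H_{D'}$, so that $X_{R_1} = X_{H_D} \circ \theta_{H_{D'}}$ there. The strong Markov property at $H_{D'}$, followed by the three displayed bounds (applied with the far starting point $z = X_{H_{D'}} \in D'$), gives
\[
P_x[H_{D'} < R_1 < \infty,\, X_{R_1} = y] \ \le\ E_x\big[H_{D'} < \infty;\ P_{X_{H_{D'}}}[H_D < \infty,\, X_{H_D} = y]\big] \ \le\ c\, \ell_n^{-2(d-2)}\, \ov e_D(y)\,.
\]
On the other hand, as $y \in \partial_{\rm int}D$ does not belong to $D'$, the events $\{H_{D'} < R_1 < \infty,\, X_{R_1} = y\}$ and $\{H_{D'} > R_1,\, X_{R_1} = y\}$ partition $\{R_1 < \infty,\, X_{R_1} = y\}$, whose probability equals $P_x[H_D < \infty]\, P_x[X_{H_D} = y \mid H_D < \infty] \ge c^{-1}\ell_n^{-(d-2)}\, \ov e_D(y)$. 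For large $L_0$ the factor $\ell_n^{-(d-2)} \le L_0^{-a(d-2)}$ is small, so the first of these two pieces is at most half of their sum; hence $P_x[H_{D'} > R_1,\, X_{R_1} = y] \ge \frac12\, c^{-1}\ell_n^{-(d-2)}\, \ov e_D(y)$, and comparing this with the displayed upper bound on the first piece yields (\ref{3.28}). Finally (\ref{3.29}) would follow from the crude bounds alone: $P_x[H_{D'} < \infty,\, R_1 = \infty] \le P_x[H_{D'} < \infty] \le c\,\ell_n^{-(d-2)}$, whereas $P_x[R_1 = \infty = H_{D'}] = P_x[H_{D \cup D'} = \infty] \ge 1 - P_x[H_{D'} < \infty] - P_x[H_D < \infty] \ge \frac12$ for large $L_0$ (using $\ell_n \ge L_n^a \ge L_0^a$, cf.~(\ref{3.2}), (\ref{3.3})), so that dividing one bound by the other gives (\ref{3.29}). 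The only genuinely delicate input in this scheme is the uniform--in--$n$ harmonic measure comparison displayed above; I would expect to spend most of the effort making that precise (or citing it), the remaining steps being routine.
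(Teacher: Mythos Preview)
Your approach is correct and structurally matches the paper's: the strong Markov step at $H_{\wt C_{\ov m_1}}$, the hitting-probability factor $c\,\ell_n^{-(d-2)}$, the partition of $\{R_1 < \infty, X_{R_1}=y\}$, and the absorption for large $L_0$ all appear in both proofs, and your treatment of (\ref{3.29}) is essentially identical to the paper's.

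The one substantive difference is how you handle the dependence on the specific entrance point $y$. You invoke a comparison of the entrance distribution $P_w[X_{H_D} = y \mid H_D < \infty]$ with the normalized equilibrium measure $\ov e_D(y)$, valid uniformly over far-away $w$, and you correctly flag this as the ``real obstacle''. The paper sidesteps this lemma by a slightly more direct device: after hitting $\wt C_{\ov m_1}$, the walk must cross $\partial U$ before reaching $\wt C_{\ov m_2}$, so a second strong Markov step (at $H_{\partial U}$) reduces everything to comparing the values of $z \mapsto P_z[R_1 < \infty, X_{R_1} = y]$ over $z \in \partial U$. This function is positive and harmonic on $\wt C_{\ov m_2}^c$, so Harnack's inequality (Theorem~1.7.2 of \cite{Lawl91}) plus a routine chaining argument gives $\sup_{z \in \partial U} \le c \inf_{z \in \partial U}$ directly, with a constant depending only on $d$. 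Your equilibrium-measure comparison is of course a consequence of exactly this Harnack estimate (let the reference point go to infinity), so the two proofs are equivalent in content; the paper's formulation simply spares you the effort of locating or proving the harmonic-measure lemma as a separate input.
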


\begin{proof}
We begin with the proof of (\ref{3.28}). We recall the notation introduced below (\ref{1.4}). For $z \in \partial U$, $y \in \partial_{\rm int} \,\wt{C}_{\ov{m}_2}$ we have
\begin{equation}\label{3.30}
\begin{array}{l}
P_z[H_{\wt{C}_{\ov{m}_1}} < R_1 < \infty, X_{R_1} = y] \stackrel{\mbox{\scriptsize strong Markov}}{=} 
\\[1.5ex]
E_z \big[H_{\wt{C}_{\ov{m}_1}} < R_1, P_{X_{H_{\wt{C}_{\ov{m}_1}}}}[R_1 < \infty, X_{R_1} = y]\big] =
\\[1.5ex]
E_z\big[H_{\wt{C}_{\ov{m}_1}} < R_1,  E_{X_{H_{\wt{C}_{\ov{m}_1}}}} [H_{\partial U} < \infty, P_{X_{H_{\partial U}}} [R_1 < \infty, X_{R_1} = y]\big]\big] \,,
\end{array}
\end{equation}

\n
where in the last step we used for $z^\prime \in \wt{C}_{\ov{m}_1}$ the $P_{z^\prime}\,$-\,almost sure identity  $R_1 =$ \linebreak $H_{\partial U} + R_1 \circ \theta_{H_{\partial U}}$, and the strong Markov property at time $H_{\partial U}$. As a result we see that:
\begin{equation}\label{3.31}
\begin{array}{l}
\sup\limits_{z \in \partial U} \,P_z[H_{\wt{C}_{\ov{m}_1}} < R_1 < \infty, X_{R_1} = y] \le \sup\limits_{z \in \partial U} \,P_z [H_{\wt{C}_{\ov{m}_1}} < \infty] \;\times 
\\[2ex]
\sup\limits_{z \in \partial U} \,P_z [R_1 < \infty, X_{R_1} = y]  \stackrel{(\ref{1.9})}{\le} c\,\ell_n^{-(d-2)} \,\sup\limits_{z \in \partial U} \,P_z [R_1 < \infty, X_{R_1} = y] \,,
\end{array}
\end{equation}

\medskip\n
with a similar bound as above (\ref{3.24}) in the last step. Note that
\begin{equation*}
P_z [R_1 < \infty, X_{R_1} = y] = P_z [H_{\wt{C}_{\ov{m}_2}} < \infty, X_{H_{\wt{C}_{\ov{m}_2}}} = y], \;z \in \wt{C}_{\ov{m}_2}^c\,,
\end{equation*}

\n
is a positive harmonic function, and using Harnack's inequality, cf.~Theorem 1.7.2 of \cite{Lawl91}, together with a standard covering argument, we see that:
\begin{equation}\label{3.32}
\sup\limits_{z \in \partial U} \,P_z[R_1 < \infty, X_{R_1} = y] \le c \inf\limits_{z \in \partial U} P_z \,[R_1 < \infty, X_{R_1} = y]\,.
\end{equation}

\medskip\n
Therefore coming back to (\ref{3.31}) we see that
\begin{equation*}
\begin{array}{l}
\sup\limits_{z \in \partial U} \,P_z[H_{\wt{C}_{\ov{m}_1}} < R_1  < \infty, X_{R_1} = y] \le c^\prime \,\ell_n^{-(d-2)}  \inf\limits_{z \in \partial U} \,P_z [R_1 < \infty, X_{R_1} = y] =
\\[1ex]
c^\prime \,\ell_n^{-(d-2)} \inf\limits_{z \in \partial U} (P_z[H_{\wt{C}_{\ov{m}_1}} < R_1 < \infty, X_{R_1} = y]  +  P_z[R_1 < \infty, X_{R_1} = y, H_{\wt{C}_{\ov{m}_1}} > R_1])\,.
\end{array}
\end{equation*}

\n
For large $L_0$, we have $c^\prime \,\ell_n^{-(d-2)}  \le \frac{1}{2}$, for all $n \ge 0$, with $c^\prime$ as in the last line of (\ref{3.32}), and we hence see that for $x \in \partial U$,
\begin{equation}\label{3.33}
P_x[H_{\wt{C}_{\ov{m}_1}} < R_1  < \infty, X_{R_1} = y] \le 2c^\prime  \,\ell_n^{-(d-2)}  P_x[H_{\wt{C}_{\ov{m}_1}} > R_1, X_{R_1} = y] .
\end{equation}

\medskip\n
This proves (\ref{3.28}). We now turn to the proof of (\ref{3.29}) which is more elementary. Indeed one has
\begin{equation*}
\inf\limits_{x \in \partial U} \,P_x[R_1 = \infty, H_{\wt{C}_{\ov{m}_1}} = \infty] \ge c\,,
\end{equation*}

\medskip\n
as follows from the invariance principle used to let the walk move at a distance from $V = \wt{C}_{\ov{m}_1} \cup \wt{C}_{\ov{m}_2}$, which is a multiple of $L_{n+1}$, as well as (\ref{1.9}) and standard bounds on the Green function. On the other hand the left-hand side of (\ref{3.29}) with a similar inequality as above (\ref{3.24}) is bounded by $c \,\ell_n^{-(d-2)}$. Our claim follows.
\end{proof}

The main control on the intensity measure $\wt{\xi}^\ell_{1,2} + \wt{\xi}^\ell_{2,1}$ of $\wt{\rho}^{\,\ell}_{1,2} + \wt{\rho}^{\,\ell}_{2,1}$ in terms of the intensity measure $\wt{\xi}^\ell_{2,2}$ of $\wt{\rho}^{\,\ell}_{2.2}$ is provided by the next
\begin{lemma}\label{lem3.3}
For large $L_0$, one has
\begin{equation}\label{3.34}
\wt{\xi}^\ell_{1,2} + \wt{\xi}^\ell_{2,1} \le \dis\frac{u^\prime}{u - u^\prime} \Big[\Big(1 + \dis\frac{c_1}{\ell_n^{d-2}}\Big)^{\ell+1} - 1\Big]\;\wt{\xi}^\ell_{2,2}, \;\mbox{for $\ell \ge 1$} \,.
\end{equation}
\end{lemma}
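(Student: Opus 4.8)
The plan is to compare the three Poisson point processes $\wt\rho^{\,\ell}_{1,2}$, $\wt\rho^{\,\ell}_{2,1}$ and $\wt\rho^{\,\ell}_{2,2}$ through their intensity measures on $W_f^{\times\ell}$ by disintegrating each walk according to its first entrance point in $\wt C_{\ov m_2}$ and then along the successive return/departure cycle structure. First I would write down the intensity $\xi^\ell_{2,2}$ of the pre-image point process on $\{D_\ell < R_{\ell+1}=\infty\}$, which by (\ref{3.19}) and the strong Markov property at $R_1,D_1,\dots,R_\ell,D_\ell$ factorizes as $(u-u')$ times $e_V$-mass on $\wt C_{\ov m_2}$ (with the event $H_{\wt C_{\ov m_1}}=\infty$ imposed on the first leg), times the probability of reaching $\wt C_{\ov m_2}$ again and choosing the prescribed excursion, times transition probabilities between departures from $U$ and the next return, and finally a factor $P_z[R_1=\infty = H_{\wt C_{\ov m_1}}]$ closing off the last leg. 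The analogous disintegration of $\xi^\ell_{2,1}$ (paths started in $\wt C_{\ov m_1}$, forced to hit $\wt C_{\ov m_2}$) and $\xi^\ell_{1,2}$ (started in $\wt C_{\ov m_1}$ — wait, these are the ones started in $\wt C_{\ov m_1}$; $\mu'_{1,2}$ has $X_0\in\wt C_{\ov m_1}$, $H_{\wt C_{\ov m_2}}<\infty$) produces an identical ``middle block'' of excursion transition probabilities, so that the whole comparison reduces to comparing the first and last legs cycle by cycle.

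The core of the argument is then Lemma \ref{lem3.2}. Each of the $\ell$ return legs of a $\rho^\ell_{2,1}$- or $\rho^\ell_{1,2}$-path that additionally visits $\wt C_{\ov m_1}$ before (or between) its returns to $\wt C_{\ov m_2}$ can, by (\ref{3.28}), be replaced at a cost of a multiplicative factor $c\,\ell_n^{-(d-2)}$ by the corresponding leg that avoids $\wt C_{\ov m_1}$; likewise the closing leg conditioned on $\{H_{\wt C_{\ov m_1}}<\infty,R_1=\infty\}$ is dominated, via (\ref{3.29}), by the closing leg on $\{R_1=\infty=H_{\wt C_{\ov m_1}}\}$ up to the same factor. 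Thus I would argue that every excursion path carrying one of the $\rho^\ell_{2,1}$ or $\rho^\ell_{1,2}$ constraints is obtained from a ``$\mu^*_{2,2}$-type'' path by inserting at least one and at most $\ell+1$ visits to $\wt C_{\ov m_1}$ among the $\ell$ returns and the final departure; each such insertion is controlled by a factor at most $c_1 \ell_n^{-(d-2)}$, and choosing which of the $\ell+1$ slots carry a $\wt C_{\ov m_1}$-visit gives, after summing over all nonempty subsets of the $\ell+1$ slots, the binomial-type bound $[(1+c_1\ell_n^{-(d-2)})^{\ell+1}-1]$. Finally the ratio of the leading $u'$ in $\xi^\ell_{2,1}$, $\xi^\ell_{1,2}$ to the leading $(u-u')$ in $\xi^\ell_{2,2}$ supplies the factor $u'/(u-u')$, and the remaining middle-block factors are common to all three intensities, so they cancel; this yields (\ref{3.34}) after pushing forward by $\phi^\ell$ (which, being a measurable map, transports the domination).

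One technical point I would take care with: the insertion/comparison must respect that $\phi^\ell$ only records the $\ell$ return-to-departure portions, while the $\wt C_{\ov m_1}$-visits being absorbed lie partly outside those recorded portions (they occur on the excursions between $U$ and the next return, or before $R_1$, or after $D_\ell$). This is exactly why the statement is about $\wt\xi^\ell_{i,j}$ (the images) rather than $\xi^\ell_{i,j}$: after projecting, the $\wt C_{\ov m_1}$-dependence of the suppressed segments integrates out, and what remains is precisely the per-slot factor bounded by Lemma \ref{lem3.2}. I expect the main obstacle to be bookkeeping the disintegration cleanly enough that the ``middle block'' genuinely cancels and the count of slots where a $\wt C_{\ov m_1}$-visit can be inserted is exactly $\ell+1$ and not off by one — i.e. making the combinatorial factor $(1+c_1\ell_n^{-(d-2)})^{\ell+1}-1$ come out with the right exponent, accounting for the first leg, the $\ell-1$ inter-return excursions, and the closing leg, together with the constraint that at least one visit occurs (which is what makes it $\mu'_{2,1}$ or $\mu'_{1,2}$ rather than $\mu'_{2,2}$). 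The Green-function and Harnack inputs needed for Lemma \ref{lem3.2} are already in place, so no further probabilistic estimate is required beyond what is quoted.
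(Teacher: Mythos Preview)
Your overall strategy---disintegrate along the return/departure structure, apply Lemma~\ref{lem3.2} slot by slot, and sum over the subsets of slots carrying a visit to $\wt C_{\ov m_1}$ to produce the binomial factor---is exactly what the paper does. Two points need correcting.

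First, a label slip: $\mu'_{2,1}$ consists of paths with $X_0\in\wt C_{\ov m_2}$ (not $\wt C_{\ov m_1}$) that hit $\wt C_{\ov m_1}$; see (\ref{2.10}) with $K=\wt C_{\ov m_1}$, $K+x=\wt C_{\ov m_2}$. For these paths $R_1=0$, so there is no ``first leg'' before $R_1$: the available slots for a $\wt C_{\ov m_1}$-visit are only the $\ell$ excursions $[D_k,R_{k+1})$, $1\le k\le\ell$, and summing over nonempty $B\subseteq\{1,\dots,\ell\}$ gives $(1+c\,\ell_n^{-(d-2)})^{\ell}-1$, with exponent $\ell$ rather than $\ell+1$.

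Second, and this is the genuine gap: for $\mu'_{1,2}$ the path starts in $\wt C_{\ov m_1}$, so after disintegrating on the first entrance point $y\in\partial_{\rm int}\wt C_{\ov m_2}$ the initial weight is
\[
\rho(y)=\sum_{x\in\wt C_{\ov m_1}}e_V(x)\,P_x\big[H_{\wt C_{\ov m_2}}<\infty,\;X_{H_{\wt C_{\ov m_2}}}=y\big],
\]
whereas $\wt\xi^\ell_{2,2}$ carries initial weight $e_V(y)$ on $\wt C_{\ov m_2}$. Lemma~\ref{lem3.2} does not compare these: (\ref{3.28})--(\ref{3.29}) control transitions out of $\partial U$, not the comparison of two entrance measures supported on different parts of $V$. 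The paper closes this with a reversibility identity, showing $\rho(y)=P_y[\wt H_{\wt C_{\ov m_2}}=\infty,\;H_{\wt C_{\ov m_1}}<\infty]$; only then does the strong Markov property at $T_U$ together with (\ref{3.29}) yield $\rho(y)\le c\,\ell_n^{-(d-2)}\,e_V(y)$. This supplies the extra factor $c\,\ell_n^{-(d-2)}$ for the opening leg of $\wt\xi^\ell_{1,2}$ (now with $B\subseteq\{1,\dots,\ell\}$ allowed empty, since the visit at time $0$ is automatic), giving $c\,\ell_n^{-(d-2)}(1+c\,\ell_n^{-(d-2)})^\ell$. Adding the two bounds produces exactly $(1+c\,\ell_n^{-(d-2)})^{\ell+1}-1$, so your target exponent is correct for the sum but not for either summand separately, and you do need one further ingredient---reversibility of the walk---beyond Lemma~\ref{lem3.2}.
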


\begin{proof}
The measure $\wt{\xi}^\ell_{2,1}$ on $W^{\times \ell}_f$ is the image under $\phi^\ell$ of the intensity measure $\xi^\ell_{2,1}$ on $\{D_\ell < R_{\ell + 1} = \infty\}$ ($\subseteq  W_+$) of $\rho^\ell_{2,1}$, which in view of (\ref{3.21}) equals
\begin{equation}\label{3.35}
\xi^\ell_{2,1}(dw) = u^\prime \,P_{e_V}[dw, X_0 \in \wt{C}_{\ov{m}_2},\,H_{\wt{C}_{\ov{m}_1}} < \infty, \,D_\ell < R_{\ell + 1} = \infty]\,.
\end{equation}

\medskip\n
As a result we find that with hopefully obvious notations
\begin{equation}\label{3.36}
\begin{array}{l}
\wt{\xi}_{2,1}^\ell (dw_1,\dots,dw_\ell)   = 
\\[1ex]
u^\prime P_{e_V} \big[  X_0 \in  \wt{C}_{\ov{m}_2}, H_{ \wt{C}_{\ov{m}_1}} < \infty,  D_\ell < R_{\ell +1} = \infty,  (X_{R_k +\point})_{0 \le \point \le D_k - R_k} \in dw_k,
\\[1ex]
\qquad  \quad 1 \le k \le \ell\big] =
 \\
u^\prime P_{e_V} \big[ X_0 \in \wt{C}_{\ov{m}_2}, \bigcup\limits^\ell_{k=1} \{H_{\wt{C}_{\ov{m}_1}} \circ \theta_{D_k} + D_k < R_{k+1}\}, \,D_\ell < R_{\ell + 1} = \infty\,,
\\
\qquad \quad (X_{R_k + \point})_{0 \le\point \le D_k - R_k} \in dw_k, \,1 \le k \le \ell\big] =
\\[1ex]
u^\prime \dsl_{\phi \not= B \subseteq \{1,\dots,\ell\}} P_{e_V}\big [X_0 \in \wt{C}_{\ov{m}_2}, \,H_{\wt{C}_{\ov{m}_1}} \circ \theta_{D_k} + D_k < R_{k+1}\;\mbox{exactly when} 
\\[-1ex]
\hspace{3.2cm}\mbox{$k \in B$, for} \; 1 \le k \le \ell, D_\ell < R_{\ell + 1}= \infty, 
\\[1ex]
\hspace{3.2cm} (X_{R_k + \point})_{0 \le \point \le D_k - R_k} \in dw_k, 1 \le k \le \ell\big].
 \end{array}
 \end{equation}
  \medskip\n
The generic term of the above sum evaluated on $(w_1,\dots,w_\ell) \in W^{\times \ell}_f$ equals:
\begin{equation}\label{3.37}
\begin{array}{l}
u^\prime P_{e_V} \big[  X_0 \in  \wt{C}_{\ov{m}_2}, H_{ \wt{C}_{\ov{m}_1}} \circ \theta_{D_k} + D_k < R_{k+1}, \;\mbox{exactly when $k \in B$, $1 \le k \le \ell$}, 
\\
\qquad \quad D_\ell < R_{\ell + 1} = \infty,  (X_{R_k + \point})_{0 \le \point \le D_k - R_k} = w_k (\cdot), \,1 \le k \le \ell\big] =
\\[1ex]
u^\prime E_{e_V} \big[  X_0 \in  \wt{C}_{\ov{m}_2}, H_{ \wt{C}_{\ov{m}_1}} \circ \theta_{D_k} + D_k < R_{k+1}, \;\mbox{exactly when}
\\[1ex]
\qquad \quad \mbox{$k \in B \cap\{1,\dots, \ell - 1\}$ for $1 \le k \le \ell - 1, D_\ell < \infty$}, 
\\[1ex]
\qquad \quad (X_{R_{k + \point}})_{0 \le \point \le D_k - R_k} = w_k (\cdot), 1 \le k \le \ell,
\\[1ex]
\qquad \quad E_{X_{D_\ell}} [R_1 = \infty, 1\{\ell \notin B\}\,1\{H_{\wt{C}_{\ov{m}_1}} = \infty\} + 1 \{\ell \in B\}\,1\{H_{\wt{C}_{\ov{m}_1}} < \infty\}]\big],
 \end{array}
 \end{equation}
using the strong Markov property at time $D_\ell$ in the last step.

\medskip
If we denote with $w^s_k$ and $w^e_k$ the starting point and the end point of $w_k$, for $1 \le k \le \ell$, the above expression vanishes unless $w_k^s \in \wt{C}_{\ov{m}_2}$ and $w^e_k \in \partial U$ for each $k \in \{1,\dots,\ell\}$. If these conditions are fulfilled, using the strong Markov property repeatedly at times $R_\ell, D_{\ell - 1}, R_{\ell - 1} \dots D_1$, we see that the last line of (\ref{3.37}) equals:
\begin{equation*}
\begin{array}{l}
u^\prime P_{e_V} \big[  X_0 \in  \wt{C}_{\ov{m}_2}, (X_\point)_{0 \le \point \le D_1} = w_1(\cdot)] \,E_{w^e_1}\big[1 \{ 1\notin B\}\,1\{H_{\wt{C}_{\ov{m}_1}} > R_1\}\; + 
\\[1ex]
1\{1 \in B\}\,1\{H_{\wt{C}_{\ov{m}_1}} < R_1\}, \,R_1 < \infty, X_{R_1} = w_2^s\big] P_{w^s_2}[(X_\point)_{0 \le\point \le D_1} = w_2(\cdot)] \dots
\\[1ex]
E_{w_\ell^e} \big[1 \{\ell \notin B\} \,1\{H_{\wt{C}_{\ov{m}_1}} = \infty\} + 1 \{\ell \in B\} \,1\{H_{\wt{C}_{\ov{m}_1}} < \infty\}, \,R_1 = \infty\big]\,.
\end{array}
\end{equation*}

\medskip\n
We can use Lemma \ref{lem3.2} for all terms in the above expression where $k \in B$, and repeatedly apply the Markov property to come back to an expression similar to (\ref{3.37}). In this fashion we see that the above expression is at most:
\begin{equation}\label{3.38}
\begin{split}
(c\,\ell_n^{-(d-2)})^{|B|}\, u^\prime P_{e_V}\big[ &X_0 \in \wt{C}_{\ov{m}_1}, H_{\wt{C}_{\ov{m}_1}} \circ \theta_{D_k} + D_k \ge R_{k+1}, \;\mbox{for}\;1 \le k \le \ell,
\\[-0.5ex]
& D_\ell < R_{\ell + 1} = \infty, \; (X_{R_k + \point})_{0 \le \point \le D_k - R_k} = w_k(\cdot), \,1 \le k \le \ell\big]\,.
\end{split}
\end{equation}

\medskip\n
Summing over the various non-empty subsets $B$ of $\{1,\dots,\ell\}$, we see with (\ref{3.36}) that
\begin{equation}\label{3.39}
\begin{split}
\wt{\xi}^\ell_{2,1}(dw_1,\dots,dw_\ell) & \le u^\prime \dsl_{\phi \not= B \subseteq \{1,\dots,\ell\}} (c\,\ell_n^{-(d-2)})^{|B|}\,P_{e_V} [X_0 \in \wt{C}_{\ov{m}_2}, H_{ \wt{C}_{\ov{m}_1}} = \infty, 
\\
& \qquad \qquad D_\ell < R_{\ell +1} = \infty, \, (X_{R_{k + \point}})_{0 \le \point \le D_k - R_k} \in d w_k, \,1 \le k \le \ell\big]
\\[1ex]
& = \dis\frac{u^\prime}{u - u^\prime} \;[(1 + c\,\ell_n^{-(d-2)})^{\ell} - 1] \,\wt{\xi}^{\,\ell}_{2,2} (d w_1,\dots,dw_\ell)\,,
\end{split}
\end{equation}

\medskip\n
where we recall that $\wt{\xi}^\ell_{2,2}$ stands for the intensity measure of $\wt{\rho}^{\,\ell}_{2,2}$. 

\medskip
We can proceed in a similar fashion to bound $\wt{\xi}^\ell_{1,2} (dw_1,\dots,dw_\ell)$. The only difference stems from the fact that under $\xi^\ell_{1,2}(dw)$ paths start in $\wt{C}_{\ov{m}_1}$ and $B$, cf.~last line of (\ref{3.36}), can also be the empty set. In an analogous fashion to (\ref{3.38}) we then obtain the following bound:
\begin{equation}\label{3.40}
\begin{array}{l}
u^\prime P_{e_V} \big[ X_0 \in  \wt{C}_{\ov{m}_1}, H_{ \wt{C}_{\ov{m}_1}} \circ \theta_{D_k} + D_k < R_{k+1}, \;\mbox{exactly when $k \in B$, for $1 \le k \le \ell$}, 
\\
\qquad \quad D_\ell < R_{\ell + 1} = \infty,\, (X_{R_k + \point})_{0 \le \point  \le D_k -  R_k} \in w_k (\cdot), \,1 \le k \le \ell\big] \le
\\[1ex]
(c\,\ell_n^{-(d-2)})^{|B|} \,u^\prime P_\rho  \big[  X_0 \in  \wt{C}_{\ov{m}_2}, H_{ \wt{C}_{\ov{m}_1}} \circ \theta_{D_k} + D_k \ge R_{k+1}, \;\mbox{for $1 \le k \le \ell$},
\\[1ex]
 \qquad \qquad \qquad \qquad D_\ell < R_{\ell + 1} = \infty,  \,(X_{R_k + \point})_{0 \le \point \le D_k - R_k} = w_k(\cdot), \,1 \le k \le \ell\big],
 \end{array}
 \end{equation}
with $\rho$ the measure
\begin{equation}\label{3.41}
\begin{split}
\rho(y) & = \dsl_{x \in \wt{C}_{\ov{m}_1}} e_V(x) \,P_x[ H_{\wt{C}_{\ov{m}_2}} < \infty, X_{\wt{C}_{\ov{m}_2}} = y], \;\mbox{for} \;y \in \partial_{\rm int} \,\wt{C}_{\ov{m}_2}\,,
\\[1ex]
& = 0, \;\mbox{otherwise}\,.
\end{split}
\end{equation}

\n
We now see that for $y \in \partial_{\rm int}\,\wt{C}_{\ov{m}_2}$ with a calculation of similar flavor as in (\ref{1.36})
\begin{equation}\label{3.42}
\begin{array}{lcl}
\rho(y) &\hspace{-4ex}  \stackrel{(\ref{1.6})}{=} &\hspace{-4ex}  \dsl_{x \in \wt{C}_{\ov{m}_1}, n \ge 0} P_x[\wt{H}_V = \infty] \,P_x [\wt{H}_{\wt{C}_{\ov{m}_2}} = n, X_n = y]
\\
\\[-2ex]
&\hspace{-4ex}  \stackrel{\rm reversibility}{=} &\hspace{-4ex}  \dsl_{x \in \wt{C}_{\ov{m}_1}, n \ge 0} P_x[\wt{H}_V = \infty] \,P_y [\wt{H}_{\wt{C}_{\ov{m}_2}} > n, X_n = x]
\\
\\[-2ex]
&\hspace{-4ex}  \stackrel{\rm Markov}{=} &\hspace{-4ex}  \dsl_{x \in \wt{C}_{\ov{m}_1}, n \ge 0}  \,P_y [\wt{H}_{\wt{C}_{\ov{m}_2}} > n, X_n = x, \wt{H}_V \circ \theta_n = \infty]
\\
\\[-2ex]
&\hspace{-4ex}   = &\hspace{-4ex}   P_y [\wt{H}_{\wt{C}_{\ov{m}_2}} = \infty, \,H_{\wt{C}_{\ov{m}_1}} < \infty] \,,
\end{array}
\end{equation}

\medskip\n
summing over the time $n$ and location $x$ of the last visit of the path to $\wt{C}_{\ov{m}_1}$ in the last step. Using the strong Markov property at time $T_U$, (recall that $\wt{C}_{\ov{m}_1} \subseteq U^c$), we thus find
\begin{equation}\label{3.43}
\begin{array}{lcl}
\rho(y) &\hspace{-2ex} = &\hspace{-2ex} E_y \big[ \wt{H}_{\wt{C}_{\ov{m}_2}} > T_U, \,P_{X_{T_U}} [H_{ \wt{C}_{\ov{m}_1}} < \infty, \,H_{ \wt{C}_{\ov{m}_2}} = \infty]\big] 
\\
\\[-2ex]
&\hspace{-2ex} \stackrel{(\ref{3.29})}{\le} & \hspace{-2ex} c\,\ell_n^{-(d-2)} E_y \big[\wt{H}_{ \wt{C}_{\ov{m}_2}} > T_U, P_{X_{T_U}} [ H_{ \wt{C}_{\ov{m}_1}} = \infty = H_{ \wt{C}_{\ov{m}_2}}]\big]
\\
\\[-1ex]
&\hspace{-2ex}= &\hspace{-2ex} c\,\ell_n^{-(d-2)} E_y \big[\wt{H}_V > T_U, P_{X_{T_U}} [H_V = \infty]\big] =  c\,\ell_n^{-(d-2)}\,e_V(y)\,,
\end{array}
\end{equation}

\n
for $y \in \partial_{\rm int} \,\wt{C}_{\ov{m}_2} $, using strong Markov property and (\ref{1.6}) in the last step. Therefore summing (\ref{3.40}) over $B \subseteq \{1,\dots,\ell\}$, we find that
\begin{equation}\label{3.44}
\wt{\xi}^\ell_{1,2}(dw_1,\dots,dw_\ell) \le \dis\frac{u^\prime}{u - u^\prime} \;\dis\frac{c}{\ell_n^{d-2}} \;\Big(1 + \dis\frac{c}{\ell_n^{d-2}}\Big)^\ell \;\wt{\xi}^\ell_{2,2} (dw_1,\dots,dw_\ell)\,.
\end{equation}

\medskip\n
Summing (\ref{3.39}) and (\ref{3.44}) we obtain the claim (\ref{3.34}).
\end{proof}

We now suppose $L_0$ large enough so that Lemma \ref{lem3.2} holds, and also that 
\begin{equation}\label{3.45}
u = \Big(1 + \dis\frac{c_1}{\ell_n^{d-2}}\Big)^{r+1}\,u^\prime, \;\Big(\mbox{hence} \;\dis\frac{u^\prime}{u - u^\prime} \,\Big[\Big( 1 + \dis\frac{c_1}{\ell_n^{d-2}}\Big)^{r+1} - 1\big] = 1\Big)\,.
\end{equation}

\medskip\n
We will now derive the promised upper bound on $\IP[A_{\ov{m}_2}(\mu_{2,2})]$ in terms of $p_n(u^\prime) = \IP[A^{u^\prime}_{\ov{m}_2}]$. Observe that the restriction of the interlacement at level $u^\prime$ to $\wt{C}_{\ov{m}_2} $ satisfies:
\begin{equation}\label{3.46}
\begin{split}
\cI^{u^\prime} & \cap \wt{C}_{\ov{m}_2} \stackrel{(\ref{1.54})}{=} \bigcup\limits_{w \in {\rm Supp(\mu_{V,u^\prime})}} w(\IN) \cap \wt{C}_{\ov{m}_2} = \bigcup\limits_{w \in {\rm Supp}(\mu^\prime_{2,2} + \mu^\prime_{2,1} + \mu^\prime_{1,2})} w(\IN) \cap \wt{C}_{\ov{m}_2}
\\
& \qquad \quad\;\, = \;\cI^\prime \cup \wt{\cI} \cup \ov{\cI},  
\end{split}
\end{equation}
where we have set
\begin{equation}\label{3.47}
\begin{split}
\cI^\prime & = \bigcup\limits_{w \in {\rm Supp}(\mu^\prime_{2,2})} w(\IN) \cap \wt{C}_{\ov{m}_2}\,,
\\
\wt{\cI} & = \bigcup\limits_{ 1 \le \ell \le r} \;\bigcup\limits_{(w_1,\dots,w_\ell) \in {\rm Supp}\,\wt{\rho}^{\,\ell}_{1,2} + \wt{\rho}^{\,\ell}_{2,1}} \;\mbox{(range $w_1 \cup \dots \cup$ range $w_\ell) \cap \wt{C}_{\ov{m}_2}$}\,,
\\
\ov{\cI} & = \bigcup\limits_{w \in {\rm Supp} \,\ov{\rho}_{1,2} + \ov{\rho}_{2,1}} w (\IN) \cap \wt{C}_{\ov{m}_2}\,,
\end{split}
\end{equation}

\medskip\n
and we used (\ref{3.20}) together with the fact that for any $\ell \ge 1$, $w \in {\rm Supp} \,\rho^\ell_{1,2} \cup {\rm Supp} \,\rho^\ell_{2,1}$, with $\phi^\ell(w) = (w_1,\dots,w_\ell)$ due to (\ref{3.26}): 
\begin{equation*}
w(\IN) \cap \wt{C}_{\ov{m}_2} = \mbox{(range $w_1 \cup \dots \cup$ range $w_\ell) \cap \wt{C}_{\ov{m}_2}$}\,.
\end{equation*}

\n
If we now define $\cI^*$ by replacing $\wt{\rho}_{1,2}^{\,\ell} + \wt{\rho}^{\,\ell}_{2,1}$ in the second line of (\ref{3.47}) by $\wt{\rho}^{\,\ell}_{2,2}$, we see from (\ref{3.27}) that
\begin{equation}\label{3.48}
\mbox{the random sets $\cI^\prime$, $\wt{\cI}$, $\ov{\cI}$, $\cI^*$ are independent under $\IP$}\,.
\end{equation}

\medskip\n
We also see from (\ref{3.34}), (\ref{3.27}) and the choice (\ref{3.45}) that for each $1 \le \ell \le r$, the Poisson point process $\wt{\rho}^{\,\ell}_{1,2} + \wt{\rho}^{\,\ell}_{2,1}$ is stochastically dominated by $\wt{\rho}^{\,\ell}_{2,2}$ so that
\begin{equation}\label{3.49}
\mbox{$\wt{\cI}$ is stochastically dominated by $\cI^*$}\,.
\end{equation}
With (\ref{3.48}), (\ref{3.49}) we thus find in view of (\ref{3.15}) that
\begin{equation}\label{3.50}
\begin{array}{l}
\IP\Big[A_{\ov{m}_2}\Big(\mu^\prime_{2,2} + \dsl_{1 \le \ell \le r} \rho^\ell_{2,2}\Big)\Big]  = 
\\[1ex]
\IP\big[\mbox{there is a crossing in $\wt{C}_{\ov{m}_2} \backslash (\cI^\prime \cup \cI^*)$ from $C_{\ov{m}_2}$ to $\wt{S}_{\ov{m}_2}\big]$}
\\[1ex]
\le \IP\big[\mbox{there is a crossing in $\wt{C}_{\ov{m}_2} \backslash (\cI^\prime \cup \wt{\cI})$ from $C_{\ov{m}_2}$ to $\wt{S}_{\ov{m}_2}\big] $}
\\[1ex]
= \IP\Big[A_{\ov{m}_2}\Big(\mu^\prime_{2,2} + \dsl_{1 \le \ell \le r} \rho^\ell_{2,1} + \rho^\ell_{1,2}\Big)\Big] \,,
\end{array}
\end{equation}
so that
\begin{equation}\label{3.51}
\begin{array}{l}
\IP[A_{\ov{m}_2}(\mu_{2,2})] \stackrel{(\ref{3.18})}{=} \IP [A_{\ov{m}_2}(\mu^\prime_{2,2} + \mu^*_{2,2})]\stackrel{(\ref{3.20})}{\le} \IP\Big[A_{\ov{m}_2}\Big(\mu^\prime_{2,2} + \dsl_{1 \le \ell \le r} \rho^\ell_{2,2}\Big)\Big]  \stackrel{(\ref{3.50})}{\le} 
\\
\IP\Big[A_{\ov{m}_2}\Big(\mu_{2,2}^\prime + \dsl_{1 \le \ell \le r} \rho^\ell_{2,1} + \rho^\ell_{1,2}\Big)\Big] \stackrel{(\ref{3.20})}{\le} \IP[A_{\ov{m}_2}(\mu^\prime_{2,2} + \mu^\prime_{2,1} + \mu^\prime_{1,2}), \ov{\rho}_{2,1} = 0 = \ov{\rho}_{1,2}]
\\[2ex]
+\; \IP[\ov{\rho}_{2,1} \;\mbox{or} \;\ov{\rho}_{1,2} \not= 0] = \IP [A_{\ov{m}_2}(\mu_{V,u^\prime}), \,\ov{\rho}_{2,1} = 0 = \ov{\rho}_{1,2} ]  +  \IP[\ov{\rho}_{2,1}  \;\mbox{or} \;\ov{\rho}_{1,2} \not= 0] 
\\[1ex]
 \le p_n(u^\prime) + 1 - e^{-\ov{\xi}_{2,1}(W_+)} + 1 - e^{-\ov{\xi}_{1,2}(W_+)} \stackrel{(\ref{3.24}), (\ref{3.25})}{\le} p_n(u^\prime) + 2 \,u^\prime\,c^r \,L_n^{(d-2) - a(d-2)r}\,.
\end{array}
\end{equation}

\medskip\n
This is the promised upper bound on $\IP[A_{\ov{m}_2}(\mu_{2,2})]$. We can now come back to (\ref{3.13}), (\ref{3.17}) and obtain that when $L_0$ is large for $n \ge 0$, $r \ge 1$, $0 < u^\prime < u$ satisfying (\ref{3.45}) one has
\begin{equation}\label{3.52}
p_{n+1}(u) \le c_2 \,\ell_n^{2(d-1)} p_n(u) \big(p_n(u^\prime) + u^\prime \,c^r_3 \,L_n^{(d-2)(1-ar)}\big)\,.
\end{equation}

\n
Given $u_0 > 0$, $r \ge 1$, we thus define the increasing sequence
\begin{equation}\label{3.53}
u_{n+1} = \Big(1 + \dis\frac{c_1}{\ell_n^{(d-2)}}\Big)^{r+1} \,u_n, \;\mbox{for $n \ge 0$}\,,
\end{equation}
as well as the sequence
\begin{equation}\label{3.54}
a_n = c_2 \, \,\ell_n^{2(d-1)} p_n(u_n), \;n \ge 0\,.
\end{equation}

\n
We will now prove a lemma that uses inequality (\ref{3.52}) to set-up an induction scheme ensuring that $a_n$ is at most $L_n^{-1}$ for all $n \ge 0$. Note that (\ref{3.52}) deteriorates when 
$u^\prime$ becomes large. This is compensated by picking $r$ sufficiently big and checking (\ref{3.56})ii)
at each step. In the end to be able to initiate the induction, we will need to pick $L_0$ large, then $ u_0 \ge c(L_0)$ to check (\ref{3.56})i) for $n=0$, and finally $r \ge c(L_0, u_0)$, see (\ref{3.65}), thus influencing the whole sequence $u_n$, so as to ensure that  (\ref{3.56})ii) holds for $n=0$.
\begin{lemma}\label{lem3.4}
If $L_0 \ge c$, then for $r$ such that
\begin{equation}\label{3.55}
(d-2) \,ar \ge 4d\,,
\end{equation}

\n
and any $u_0 > 0$, when for some $n \ge 0$,
\begin{equation}\label{3.56}
\begin{array}{lll}
{\rm i)}  \;\; a_n \le L_n^{-1}, &\mbox{and} \;&{\rm ii)}  \;\;u_n \le L_n^{(d-2) \,a \frac{r}{2}}\;,
\end{array}
\end{equation}

\n
then {\rm (\ref{3.56})} holds as well with $n+1$ in place of $n$.
\end{lemma}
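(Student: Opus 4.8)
The plan is to run the induction by combining the recurrence (\ref{3.52}) with the definitions (\ref{3.53}), (\ref{3.54}). First I would apply (\ref{3.52}) with $u = u_{n+1}$ and $u^\prime = u_n$; this is legitimate because (\ref{3.53}) is exactly the relation (\ref{3.45}) for these two values. Since $p_n(\cdot)$ is non-increasing and $u_{n+1} \ge u_n$, one may replace $p_n(u_{n+1})$ by $p_n(u_n)$ on the right-hand side; multiplying through by $c_2\,\ell_{n+1}^{2(d-1)}$ and recognising $c_2\,\ell_n^{2(d-1)} p_n(u_n) = a_n$ then yields
\begin{equation*}
a_{n+1} \le \Big(\dis\frac{\ell_{n+1}}{\ell_n}\Big)^{2(d-1)} a_n^2 + c_2\,\ell_{n+1}^{2(d-1)}\,a_n\,u_n\,c_3^r\,L_n^{(d-2)(1-ar)}\,,
\end{equation*}
the two summands arising from the two terms inside the parentheses of (\ref{3.52}). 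The goal is to show that, for large $L_0$, each of them is at most $\frac12\,L_{n+1}^{-1}$, which gives (\ref{3.56})i) at level $n+1$.

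For the first summand I would use only the inductive bound $a_n \le L_n^{-1}$ together with the elementary scale estimates $L_n^a \le \ell_n \le 100\,L_n^a$ and $L_{n+1} = \ell_n L_n$ from (\ref{3.2}); these give $\ell_{n+1}/\ell_n \le 100^{1+a} L_n^{a^2}$ and $L_{n+1} \le 100\,L_n^{1+a}$, so the first summand is at most $c\,L_n^{2(d-1)a^2 - 2}$, and since $a = \frac{1}{100d}$ is tiny the exponent is well below $-(1+a)$, whence the bound $\frac12 L_{n+1}^{-1}$ for large $L_0$. The second summand is the delicate one because of the factor $c_3^r$. Here I would insert both inductive hypotheses, $a_n \le L_n^{-1}$ and the crucial $u_n \le L_n^{(d-2)ar/2}$ from (\ref{3.56})ii): after cancellation the power of $L_n$ is $(d-2) - 1 - (d-2)ar/2$. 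To absorb $c_3^r$ I would rewrite $c_3^r\,L_n^{-(d-2)ar/2} = \big(c_3\,L_n^{-(d-2)a/2}\big)^r$ and note that for large $L_0$ (a bound depending on $d$ only, through $c_3$ and $a$) one has $c_3\,L_n^{-(d-2)a/2} \le L_n^{-(d-2)a/4}$, so this factor is at most $L_n^{-(d-2)ar/4} \le L_n^{-d}$, the last step using precisely (\ref{3.55}). What survives is $c\,\ell_{n+1}^{2(d-1)}\,L_n^{(d-2)-1-d} = c\,\ell_{n+1}^{2(d-1)}\,L_n^{-3}$, and since $\ell_{n+1}^{2(d-1)} \le c\,L_n^{2(d-1)a(1+a)} \le c\,L_n^{1/25}$ the resulting exponent is again well below $-(1+a)$; so for large $L_0$ the second summand is $\le \frac12 L_{n+1}^{-1}$ as well. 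Adding the two bounds gives $a_{n+1} \le L_{n+1}^{-1}$.

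It remains to propagate (\ref{3.56})ii). From (\ref{3.53}) one has $u_{n+1} = (1 + c_1\ell_n^{-(d-2)})^{r+1}\,u_n \le u_n \exp\big((r+1)\,c_1\,\ell_n^{-(d-2)}\big)$, while $L_{n+1}^{(d-2)ar/2} = \ell_n^{(d-2)ar/2}\,L_n^{(d-2)ar/2}$, so in view of (\ref{3.56})ii) at level $n$ it is enough to check $(r+1)\,c_1\,\ell_n^{-(d-2)} \le \frac{(d-2)ar}{2}\,\log \ell_n$. Using $r + 1 \le 2r$ the factor $r$ cancels, and since $\ell_n \ge L_n^a \ge L_0^a$ this reduces to $2c_1\,L_0^{-a(d-2)} \le \frac{(d-2)a^2}{2}\,\log L_0$, which holds for large $L_0$ because the left side tends to $0$ while the right side tends to $\infty$. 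This yields (\ref{3.56})ii) at level $n+1$ and finishes the induction step.

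The step I expect to be the main obstacle is the control of the $c_3^r$ factor in the second summand: one has to make sure that the reserve of negative powers of $L_n$ — coming from the gap between $u_n \le L_n^{(d-2)ar/2}$ in (\ref{3.56})ii) and the exponent $L_n^{(d-2)(1-ar)}$ in (\ref{3.52}), a reserve guaranteed exactly by (\ref{3.55}) in the form $(d-2)ar \ge 4d$ — suffices both to swallow $c_3^r$ (for $L_0$ large, depending only on $d$) and to dominate the $L_n^{d-2}$ growth with room left over for the combinatorial factor $\ell_{n+1}^{2(d-1)}$; everything else is a routine consequence of $L_n^a \le \ell_n \le 100\,L_n^a$ and $L_{n+1} = \ell_n L_n$, uniformly in $r$, $u_0$ and $n$.
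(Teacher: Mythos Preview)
Your proof is correct and follows essentially the same route as the paper: derive the recurrence $a_{n+1} \le (\ell_{n+1}/\ell_n)^{2(d-1)} a_n^2 + c\,\ell_{n+1}^{2(d-1)} a_n\,u_n\,c_3^r\,L_n^{(d-2)(1-ar)}$ from (\ref{3.52}), bound each summand by $\tfrac12 L_{n+1}^{-1}$ using (\ref{3.56}) and the scale relations (\ref{3.2}), (\ref{3.57}), with (\ref{3.55}) supplying the reserve to absorb $c_3^r$, and then propagate (\ref{3.56})ii) from (\ref{3.53}). The only cosmetic differences are that the paper factors out one power of $a_n$ before splitting into the two terms (\ref{3.59}), (\ref{3.60}), and for (\ref{3.56})ii) it compares $(1+c_1\ell_n^{-(d-2)})^{r+1}$ with $\ell_n^{(d-2)ar/2}$ algebraically rather than via the exponential bound you use; neither changes the substance.
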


\begin{proof}
Since $p_n(\cdot)$ is a non-increasing function, we see from (\ref{3.52}) that for $u_0 > 0$, $r \ge 1$, one has for $n \ge 0$,
\begin{equation*}
a_{n+1} \le a_n \Big(\Big(\dis\frac{\ell_{n+1}}{\ell_n}\Big)^{2(d-1)} a_n + c_2 \,\ell_{n+1}^{2(d-1)} \;u_n \,c_3^r \,L_n^{(d-2)(1-ar)}\Big)\,,
\end{equation*}
and since one also has
\begin{equation}\label{3.57}
\dis\frac{\ell_{n+1}}{\ell_n} \stackrel{(\ref{3.2})}{=} \dis\frac{[L^a_{n+1}]}{[L^a_n]} \le c\,\Big(\dis\frac{L_{n+1}}{L_n}\Big)^a \stackrel{(\ref{3.2})}{=} c\,\ell^a_n \le c\,L^{a^2}_n, \;\mbox{for $n \ge 0$}\,,
\end{equation}
we find:
\begin{equation}\label{3.58}
a_{n+1} \le c_4 \,a_n \big(L_n^{2(d-1) a^2} a_n + L_n^{2(d-1)a(1+a)} u_n \,c_3^r \,L_n^{(d-2)(1-ar)}\big), \;\mbox{for $n \ge 0$}\,.
\end{equation}

\n
We will now seek to propagate (\ref{3.56}) i) from $n$ to $n+1$. For this purpose it suffices to show that the following two inequalities hold:
\begin{align}
&c_4\,L_n^{2(d-1)a^2} a_n \le \fr \;\mbox{\f $\dis\frac{L_n}{L_{n+1}}$}, \;\mbox{and} \label{3.59}
\\[2ex]
&c_4\,u_n\,c^r_3 \, L_n^{2(d-1)a(1+a) + (d-2)(1-ar)}   \le \fr \;\mbox{\f $\dis\frac{L_n}{L_{n+1}}$}\,.\label{3.60}
\end{align}

\n
To check (\ref{3.59}) observe that:
\begin{equation}\label{3.61}
c_4 \,L_n^{2(d-1) a^2} a_n \stackrel{(\ref{3.56}) i)}{\le} c_4 \, L_n^{2(d-1)a^2 -1} \stackrel{(\ref{3.1})}{\le} c_4 \,L_n^{a-1} \le \mbox{\f $\dis\frac{1}{200}$} \;L_n^{-a} \stackrel{(\ref{3.2})}{\le} \fr  \;\mbox{\f $\dis\frac{L_n}{L_{n+1}}$}\;,
\end{equation}

\medskip\n
with $L_0 \ge c$ and (\ref{3.1}) in the next to last inequality.

\medskip
We now turn to (\ref{3.60}) and observe that when $r$ satisfies (\ref{3.55}) then
\begin{equation}\label{3.62}
\begin{array}{lcl}
u_n \,c_3^r \,L_n^{(d-2)(1-ar)} &\hspace{-2ex} \stackrel{(\ref{3.56}) ii)}{\le} &\hspace{-2ex} c_3^r \,L_n^{(d-2)(1-a\frac{r}{2})} \stackrel{(\ref{3.55})}{\le} \big(c_3 \,L_n^{-(d-2)\frac{a}{4}}\Big)^r \,L^{-2}_n
\\[1ex]
& \hspace{-2ex} \le &\hspace{-2ex}  L^{-2}_n , \;\mbox{if $L_0 \ge c$}\,.
\end{array}
\end{equation}

\n
As a result we see that the left-hand side of (\ref{3.60}) is smaller than 
\begin{equation*}
c_4\,L_n^{2(d-1)a(1+a) -2} \stackrel{(\ref{3.1})}{\le} c_4\,L_n^{-1} \le \mbox{\f $\dis\frac{1}{200}$} \;L_n^{-a} \le \fr \;\mbox{\f $\dis\frac{L_n}{L_{n+1}}$}, \;\mbox{if $L_0 \ge c$}\,.
\end{equation*}

\n
Recalling (\ref{3.61}), we see that for large $L_0$ we can propagate (\ref{3.56}) i) from $n$ to $n+1$. We now turn to (\ref{3.56}) ii). We have with (\ref{3.53}):
\begin{equation}\label{3.63}
\begin{split}
u_{n+1} & = \Big(1 + \dis\frac{c_1}{\ell_n^{d-2}}\Big)^{r+1} u_n \stackrel{(\ref{3.56}) ii)}{\le} \Big(1 + \dis\frac{c_1}{\ell_n^{d-2}}\Big)^{r+1} \,L_n^{(d-2) a \frac{r}{2}}
\\[1ex]
& = L_{n+1}^{(d-2)a \frac{r}{2}} \,\ell_n^{-(d-2) a \frac{r}{2}} \Big(1 + \dis\frac{c_1}{\ell_n^{d-2}}\Big)^{r+1}  \stackrel{r \ge 1}{\le} L_{n+1}^{(d-2) a \frac{r}{2}} \Big[\Big( 1 + \dis\frac{c_1}{\ell_n^{d-2}}\Big)^2 \,\ell_n^{-(d-2) \frac{a}{2}}\Big]^r
\\[1ex]
&\hspace{-0.8ex} \stackrel{(\ref{3.2})}{\le} L_{n+1}^{(d-2) a \frac{r}{2}} \big[(1 + c_1)^2 (100[L^a_0])^{-(d-2)\frac{a}{2}}\big]^r \le L_{n+1}^{(d-2)a \frac{r}{2}}, \;\mbox{if $L_0 \ge c$}\,.
\end{split}
\end{equation}

\medskip\n
Hence for $L_0 \ge c$, we can propagate (\ref{3.56}) ii) from $n$ to $n+1$ as well, and this concludes the proof of Lemma \ref{lem3.4}.
\end{proof}

We now choose $L_0$ large so that for any $u_0 > 0$ and $r \ge 1$ satisfying (\ref{3.55}), when (\ref{3.56}) holds for $n = 0$, then it holds for all $n \ge 0$. If we now choose $u_0 \ge c(L_0)$, we see that for any $m \in I_0$, (recall $I_0$ is the set of labels at level $0$),
\begin{equation}\label{3.64}
\begin{split}
a_0 & \stackrel{(\ref{3.54})}{=} c_2 \,\ell_0^{2(d-1)} \,p_0(u_0) = c_2 \,\ell_0^{2(d-1)} \,\IP[A^{u_0}_m]
\\
&\;\;  \le c_2 \,\ell_0^{2(d-1)} \,\IP[\cV^{u_0} \cap \wt{S}_m \not= \phi] \stackrel{(\ref{1.58})}{\le} c_2 \,\ell_0^{2(d-1)} |\wt{S}_m| \,e^{-u_0/g(0)}
\\[1ex]
&\;\;  \le c \, \,L_0^{2(d-1)a+d-1}\,e^{-u_0/g(0)} \le L_0^{-1}\,,
\end{split}
\end{equation}

\n
using $u_0 \ge c(L_0)$ in the last step. Similarly given $L_0 \ge c$ and $u_0 \ge c(L_0)$ as above, we can pick $r \ge c(L_0,u_0)$ such that:
\begin{equation}\label{3.65}
u_0 \le L_0^{(d-2)a \frac{r}{2}}\;.
\end{equation}

\n
With such choices, as noted above, it follows that $a_n \le L_n^{-1}$, for all $n \ge 0$, and this completes the proof of Proposition \ref{prop3.1}.
 \end{proof}
 
 This now brings us to the main result of this section. We recall the definition of the critical value $u_*$ in (\ref{2.17a}).
 
 \begin{theorem}\label{theo3.5} $(d \ge 3)$
 
 \medskip
 For large $u$ the vacant set $\cV^u$ does not percolate, i.e.
 \begin{equation}\label{3.66}
 u_* < \infty\,,
 \end{equation}
 and for $u > u_*$, $\IP[{\rm Perc}(u)] = 0$.
 \end{theorem}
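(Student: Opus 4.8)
The plan is to derive Theorem~\ref{theo3.5} directly from Proposition~\ref{prop3.1}, which is the real content; once the renormalization estimate (\ref{3.10}) is in hand, the deduction is short. First I would fix $L_0$ large enough, then $u_0 \ge c(L_0)$, then $r \ge c(L_0,u_0)$, so that Proposition~\ref{prop3.1} applies, and set $u_\infty = \sup_{n \ge 0} u_n$. Since $\ell_n \ge L_n^a$ and $L_n \ge L_0^{(1+a)^n}$ grows doubly exponentially, the product $\prod_{n' \ge 0}(1 + c_1 \ell_{n'}^{-(d-2)})^{r+1}$ converges, so $u_\infty < \infty$; moreover $u_n \uparrow u_\infty$. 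The goal is to show $\eta(u) = 0$ for every $u \ge u_\infty$, which by (\ref{2.17a}) gives $u_* \le u_\infty < \infty$, and then the last assertion $\IP[\mathrm{Perc}(u)] = 0$ for $u > u_*$ follows from Corollary~\ref{cor2.3}(ii), i.e.\ from the zero-one law $\IP[\mathrm{Perc}(u)] = 0 \Leftrightarrow \eta(u) = 0$ together with the monotonicity of $\eta(\cdot)$.

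The core step is to show that $\cV^{u_\infty}$ a.s.\ has no infinite component. Here I would use (\ref{3.10}): $p_n(u_n) \le (c_2 \ell_n^{2(d-1)} L_n)^{-1}$, and since $u_n \le u_\infty$ and $p_n(\cdot)$ is non-increasing, also $p_n(u_\infty) \le p_n(u_n) \to 0$ rapidly. The point is that if $0$ belonged to an infinite connected component of $\cV^{u_\infty}$, then for every $n$ this component would have to exit the box $\widetilde C_m$ around the level-$n$ box $C_m$ containing $0$; that is, the event $A^{u_\infty}_m$ of (\ref{3.7}) would occur for the label $m = m(n) \in I_n$ with $0 \in C_m$. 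Hence
\begin{equation*}
\eta(u_\infty) \le \IP[A^{u_\infty}_{m(n)}] = p_n(u_\infty) \le p_n(u_n) \le \frac{1}{c_2 \ell_n^{2(d-1)} L_n} \xrightarrow[n\to\infty]{} 0\,,
\end{equation*}
so $\eta(u_\infty) = 0$. By monotonicity of $\eta$, $\eta(u) = 0$ for all $u \ge u_\infty$, whence $u_* \le u_\infty < \infty$, proving (\ref{3.66}).

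Finally, for $u > u_*$: by definition of $u_*$ in (\ref{2.17a}) and the fact that $\eta(\cdot)$ is non-increasing, $\eta(u) = 0$; then Corollary~\ref{cor2.3}(ii), which records $\IP[\mathrm{Perc}(u)] = 0 \Leftrightarrow \eta(u) = 0$ (itself a consequence of the zero-one law (\ref{2.4})), gives $\IP[\mathrm{Perc}(u)] = 0$.

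I do not expect any real obstacle in this deduction: all the difficulty has been front-loaded into Proposition~\ref{prop3.1} (the sprinkling/renormalization argument taming the long-range dependence). The only minor point to be careful about is the elementary geometric observation that an infinite nearest-neighbor path in $\cV^u$ starting in $C_m$ must reach $\widetilde S_m = \partial_{\mathrm{int}} \widetilde C_m$, so that the event $\{0 \text{ lies in an infinite component of }\cV^u\}$ is contained in $A^u_{m(n)}$ for each $n$ — this is immediate since $\widetilde C_m$ is a finite set containing $C_m$ in its interior and a connected path to infinity cannot stay inside it. With that in place the bound on $\eta$ above is immediate, and the theorem follows.
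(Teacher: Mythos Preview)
Your proposal is correct and follows essentially the same route as the paper: choose $L_0,u_0,r$ so that Proposition~\ref{prop3.1} applies, observe that $u_\infty=\sup_n u_n<\infty$ since $\sum_n \ell_n^{-(d-2)}<\infty$, and then use the inclusion $\{0\text{ in an infinite component of }\cV^{u_\infty}\}\subseteq A^{u_\infty}_{m(n)}$ together with the monotonicity $p_n(u_\infty)\le p_n(u_n)\le (c_2\,\ell_n^{2(d-1)}L_n)^{-1}\to 0$ to conclude $\eta(u_\infty)=0$, hence $u_*\le u_\infty<\infty$; the final assertion is exactly Corollary~\ref{cor2.3}~ii). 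The paper's proof is the same argument with slightly terser bookkeeping.
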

 
 \begin{proof}
 With Corollary \ref{cor2.3} we only need to prove (\ref{3.66}). We choose $L_0, u_0, r$ as in Proposition \ref{prop3.1}, so that with $u_n$ as in (\ref{3.9}) we find:
 \begin{equation}\label{3.67}
 c_2 \,\ell_n^{2(d-1)} \,\IP[A^{u_n}_m] \le L_n^{-1}, \;\mbox{for any $n \ge 0$ and $m \in I_n$}\,.
 \end{equation}
 
\n
 With (\ref{3.2}) we know that $L_n \ge L_0^{(1+a)^n}$, and hence $\sum_n \,\ell_n^{-(d-2)} < \infty$, and we thus see that
 \begin{equation}\label{3.68}
 u_\infty = u_0 \prod\limits_{n \ge 0} \;\Big(1 + \dis\frac{c_1}{\ell_n^{d-2}}\Big)^{r+1} = u_0 \Big(\prod\limits_{n \ge 0} \,\Big(1 + \dis\frac{c_1}{\ell_n^{d-2}}\Big)\Big)^{r+1} < \infty\,.
 \end{equation}
 
 \n
 Consequently for any $n \ge 0$, and $m \in I_n$ such that $0 \in C_m$, we find as a consequence of (\ref{2.2}) and (\ref{3.7}) that
 \begin{equation}\label{3.69}
 \eta(u_\infty) \le \IP [A_m^{u_\infty}] \le c\,L^{-1}_n\,.
 \end{equation}
 
\medskip \n
 Letting $n$ tend to infinity we see that $\eta(u_\infty) = 0$, and (\ref{3.66}) follows.
 \end{proof}

\begin{remark}\label{rem3.6} \rm Once we know that $\cV^u$ does not percolate for large $u$, it is a natural question to wonder how large the vacant cluster at the origin can be. An exponential tail bound on the number of sites of the vacant cluster at the origin of subcritical Bernoulli percolation is known to hold, cf.~\cite{Grim99}, p.~132 and 350. Such an estimate cannot be true in the case of $\cV^u$ due to (\ref{1.65a}). The exact nature of the tail of this random variable  is an interesting problem. \hfill $\square$
\end{remark}

\section{Percolation for small  $u$}
 \setcounter{equation}{0}
 
The main objective of this section is to show that when $d \ge 7$, the vacant set $\cV^u$ percolates for small $u > 0$, or equivalently that $u_* > 0$, see Theorem \ref{theo4.3}. In spite of the fact that $\cV^u$ tends to contain bigger boxes than what is the case for Bernoulli percolation, cf.~(\ref{1.65a}), it does not stochastically dominate Bernoulli percolation in the highly percolative regime as noted in Remark \ref{rem1.6} 1). This fact precludes a strategy based on a direct comparison argument. We develop here a similar but simpler renormalization procedure as in the previous section. It yields a sharper result than the strategy based on the combination of the exponential bound (\ref{2.23}) and a Peierls-type argument, as outlined in Remark 3.5 3). Such a proof only works for $d \ge 18$.
 
 \medskip
 We begin with some notation. We recall the definition of $a > 0$ and $L_n, n \ge 0$, in (\ref{3.1}), (\ref{3.2}). Throughout we identify $\IZ^2$ with the subset of points $z = (z_1,\dots,z_d)$ in $\IZ^d$, such that $z_3 = z_4 = \dots = z_d = 0$. For $n \ge 0$, we define the set $J_n$ of labels of level $n$ just as in (\ref{3.4}), but with $d$ replaced by $2$. For $n \ge 0$ and $m \in J_n$ we attach the boxes in $\IZ^2$, $D_m \subseteq \wt{D}_m$, with a similar definition as in (\ref{3.5}), but with $d$ replaced by $2$, and $I_n$ by $J_n$. We write $\wt{V}_m$ for the relative interior boundary in $\IZ^2$ of $\wt{D}_m$, i.e. the set of points of $\wt{D}_m$ neighboring $\IZ^2 \backslash \wt{D}_m$. In this section, parallel to (\ref{3.7}), a crucial role is played by the ``occupied crossing'' events. Namely for $u \ge 0, n\ge 0, m \in J_n$, we define:
 \begin{equation}\label{4.1}
 \mbox{$B^u_m = \{\o \in \Omega$; there is a $*$-nearest neighbor path in $\cI^u(\o)\cap \wt{D}_m$ from $D_m$ to $\wt{V}_m\}$}, 
 \end{equation}
As a consequence of translation invariance, cf.~Theorem \ref{theo2.1} or (\ref{1.48}),
\begin{equation}\label{4.2}
q_n(u) = \IP[B^u_m], \;u \ge 0, n\ge 0, \;\mbox{with} \; m \in J_n\,,
\end{equation}

\n
is well defined. It is also a non-decreasing function of $u$. Our main tasks consists in showing that when $u$ is chosen small $q_n(u)$ tends sufficiently rapidly to $0$. Our key control stems from the
\begin{proposition}\label{prop4.1} $(d \ge 7)$

\medskip
There exists a positive constant $c_5$, cf.~{\rm (\ref{4.13})}, such that for $L_0 \ge c$, and $u \le c(L_0)$ one has
\begin{equation}\label{4.3}
c_5 \,\ell^2_n \,q_n(u) \le L_n^{-\frac{1}{2}}, \; \mbox{for all $n \ge 0$}\,.
\end{equation}
\end{proposition}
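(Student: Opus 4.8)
The plan is to set up a renormalization recursion for the crossing probabilities $q_n(u)$ along the scales $L_n$, mirroring the argument of Section 3 but in the simpler ``one-sided'' situation where we control an \emph{occupied} ($*$-connected) crossing rather than a vacant one, and where no sprinkling is needed. First I would observe that for $m\in J_{n+1}$, any $*$-nearest neighbor path in $\cI^u\cap\wt D_m$ from $D_m$ to $\wt V_m$ must successively produce a $*$-crossing of some $n$-level annulus $\wt D_{\ov m_1}$ with $\ov m_1$ in a ``inner'' family $\cH_1$ of labels (boxes touching $\partial_{\rm int}D_m$), and of some $n$-level annulus $\wt D_{\ov m_2}$ with $\ov m_2$ in a ``far'' family $\cH_2$ of boxes meeting the sphere at $|\cdot|_\infty$-distance $\tfrac{L_{n+1}}{2}$ from $D_m$. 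Counting the number of such pairs gives a combinatorial factor of order $\ell_n^{2}$ (here the ambient dimension for the labels is $2$, not $d$, which is why the factor is $\ell_n^2$ and the target exponent $-\tfrac12$ rather than $-1$), so that
\begin{equation}\label{plan4.4}
q_{n+1}(u)\le c\,\ell_n^2\;\sup_{\ov m_1\in\cH_1,\,\ov m_2\in\cH_2}\IP[B^u_{\ov m_1}\cap B^u_{\ov m_2}]\,.
\end{equation}

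Next I would estimate the joint probability in \eqref{plan4.4} by the same kind of trajectory decomposition used in \eqref{2.8}--\eqref{2.11}: with $V=\wt D_{\ov m_1}\cup\wt D_{\ov m_2}$ (viewed inside $\IZ^d$), write $\mu_{V,u}=\mu_{1,1}+\mu_{1,2}+\mu_{2,1}+\mu_{2,2}$ according to which of $\wt D_{\ov m_1}$, $\wt D_{\ov m_2}$ the trajectory starts in and whether it later hits the other. The events $B^u_{\ov m_1}$ and $B^u_{\ov m_2}$ are each determined by $\mu_{V,u}$ via the analogue of \eqref{1.54}, and since a trajectory in $\mathrm{Supp}\,\mu_{2,2}$ does not meet $\wt D_{\ov m_1}$ and one in $\mathrm{Supp}\,\mu_{1,1}$ does not meet $\wt D_{\ov m_2}$, one gets an inclusion
$B^u_{\ov m_1}\cap B^u_{\ov m_2}\subseteq A_{\ov m_1}(\mu_{1,1}+\mu_{1,2}+\mu_{2,1})\cap A_{\ov m_2}(\mu_{2,2}+\mu_{2,1}+\mu_{1,2})$ in the notation of \eqref{3.15} (adapted to $*$-connectivity). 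The ``cross'' terms $\mu_{1,2}$ and $\mu_{2,1}$ carry the long-range dependence; since here we want an \emph{upper} bound on an increasing event and we are in the small-$u$ regime, there is no need for the delicate sprinkling of Section 3 — it suffices to add the cross terms to \emph{both} sides and use independence of $\mu_{1,1}$, $\mu_{2,2}$, $\mu_{1,2}$, $\mu_{2,1}$ to factorize, at the cost of an additive error controlled by $\IP[\mu_{1,2}\ne 0]+\IP[\mu_{2,1}\ne 0]$. The latter is bounded, just as in \eqref{2.14}, by $c\,u\,\mathrm{cap}(\wt D_{\ov m})^2/d(\wt D_{\ov m_1},\wt D_{\ov m_2})^{d-2}\le c\,u\,L_n^{2(d-2)}L_{n+1}^{-(d-2)} = c\,u\,\ell_n^{-(d-2)}L_n^{(d-2)}$. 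This yields a recursion of the schematic form
\begin{equation}\label{plan4.5}
q_{n+1}(u)\le c_5\,\ell_n^{2}\,\bigl(q_n(u)^2 + c\,u\,\ell_n^{-(d-2)}\,L_n^{(d-2)}\bigr)\,.
\end{equation}

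Finally I would run the induction: set $b_n=c_5\,\ell_n^{2}\,q_n(u)$ and show by induction that $b_n\le L_n^{-1/2}$ for all $n$. Squaring the first term of \eqref{plan4.5} gives, after multiplying by $c_5\ell_{n+1}^2$ and using $\ell_{n+1}/\ell_n\le cL_n^{a^2}$ from \eqref{3.57}, a term of order $L_n^{O(a^2)}b_n^2\le L_n^{O(a^2)-1}\le \tfrac12 L_{n+1}^{-1/2}$ since $a$ is tiny and $L_{n+1}\le cL_n^{1+a}$; the error term contributes something like $c_5\,c\,u\,\ell_{n+1}^2\,\ell_n^{-(d-2)}L_n^{(d-2)}$ which, when $d\ge 7$, is bounded by $c(L_0)\,u\,L_n^{(d-2)-a(d-2)+O(a)}$ — this needs $d$ large enough that the geometric gain from $\ell_n^{-(d-2)}$ beats the powers of $L_n$ picked up from $\mathrm{cap}(\wt D_{\ov m})^2\sim L_n^{2(d-2)}$ only at a single fixed scale, which is why the threshold is $d\ge 7$ rather than something smaller — and is made $\le\tfrac12 L_{n+1}^{-1/2}$ by choosing $u\le c(L_0)$ small. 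The base case $n=0$ follows from $q_0(u)\le\IP[\cI^u\cap D_0\ne\emptyset]\le c\,|D_0|\,(1-e^{-u/g(0)})\le c\,L_0^2\,u$ by \eqref{1.58} (or more simply by a union bound over sites of $D_0$ together with \eqref{1.57}), which is $\le c_5^{-1}\ell_0^{-2}L_0^{-1/2}$ once $L_0\ge c$ and $u\le c(L_0)$. The main obstacle is bookkeeping the exponents of $L_n$ in \eqref{plan4.5} carefully enough to see that $d\ge 7$ is exactly the condition under which the error term can be absorbed, i.e.\ tracking precisely how the capacity factor $\mathrm{cap}(\wt D_{\ov m})\sim L_n^{d-2}$ and the counting factor $\ell_n^2$ combine against the random-walk hitting estimate $\ell_n^{-(d-2)}$ — together with checking that the $*$-connectivity version of the geometric ``two annuli must be crossed'' lemma and of the crossing events $B^u_m$ behaves exactly as the nearest-neighbor version does, which is routine but must be stated.
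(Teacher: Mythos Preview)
Your overall strategy matches the paper's: the two-annuli geometric lemma, the decomposition $\mu_{V,u}=\mu_{1,1}+\mu_{1,2}+\mu_{2,1}+\mu_{2,2}$, independence of the diagonal pieces to factorize, an additive error from $\IP[\mu_{1,2}\ne 0]+\IP[\mu_{2,1}\ne 0]$, and the induction on $b_n=c_5\ell_n^2 q_n(u)$. However, there is a genuine error in your estimate of the cross term, and it breaks the induction.

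The boxes $\wt D_{\ov m}$ are \emph{two-dimensional} subsets of $\IZ^2\subset\IZ^d$, so $|\wt D_{\ov m}|\le c L_n^2$ and hence $\mathrm{cap}(\wt D_{\ov m})\le |\wt D_{\ov m}|/g(0)\le c L_n^2$, \emph{not} $L_n^{d-2}$. The value $L_n^{d-2}$ is the capacity of a full $d$-dimensional box of side $L_n$, which is not what appears here. With the correct bound, the argument of (\ref{2.14}) gives
\[
\IP[\mu_{1,2}\ne 0]+\IP[\mu_{2,1}\ne 0]\;\le\; c\,u\,L_n^2\cdot L_n^2\big/L_{n+1}^{d-2}\;=\;c\,u\,L_n^4\,L_{n+1}^{-(d-2)},
\]
so the recursion is $q_{n+1}(u)\le c_5\ell_n^2\bigl(q_n(u)^2+L_n^4 L_{n+1}^{-(d-2)}\bigr)$. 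After multiplying by $c_5\ell_{n+1}^2$ the error contributes (up to constants) $L_n^{-(d-2)(1+a)+4+4a+2a^2}$, which for $d\ge 7$ and $a=\tfrac{1}{100d}$ is at most $L_n^{-1}$. \emph{This} is where the threshold $d\ge 7$ actually enters: it is the smallest $d$ for which $4-(d-2)<-1$, so that the error term is dominated by $b_n^2$ and the induction $b_n\le L_n^{-1/2}$ propagates.

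With your capacity $L_n^{d-2}$, by contrast, the error term in the $b$-recursion is of order $u\,L_n^{(d-2)(1-a)+O(a)}$, which for $d\ge 7$ is at least $u\,L_n^{5(1-a)}$ --- a \emph{positive} power of $L_n$. No fixed choice of $u\le c(L_0)$ can make this $\le\tfrac12 L_{n+1}^{-1/2}$ for all $n$, since $u$ does not depend on $n$; the sentence ``is made $\le\tfrac12 L_{n+1}^{-1/2}$ by choosing $u\le c(L_0)$ small'' is therefore a non-sequitur, and the induction fails from $n=1$ onward. The aside about the error being absorbed ``only at a single fixed scale'' has no clear meaning; the error must be controlled uniformly in $n$. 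Once you replace $L_n^{d-2}$ by $L_n^2$ throughout, your argument becomes essentially the paper's proof.
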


\begin{proof}
In analogy with (\ref{3.11}), (\ref{3.12}), we define for $n \ge 0$ and $m \in J_{n+1}$ the collection of labels of boxes at level $n$ ``at the boundary of $D_m$'':
\begin{equation}\label{4.4}
\mbox{$\cK_1 =  \{\ov{m} \in J_n; \,D_{\ov{m}} \subseteq D_m$ and some point of $D_{\ov{m}}$ neighbors  $\IZ^2 \backslash D_m\}$}\,.
\end{equation}

\medskip\n
We also consider the collection of labels of boxes at level $n$ containing some point at $|\cdot|_\infty$-distance $\frac{L_{n+1}}{2}$ from $D_m$:
\begin{equation}\label{4.5}
\cK_2 = \Big\{\ov{m} \in J_n; \, D_{\ov{m}} \cap \Big\{z \in \IZ^2; \,d(z,D_m) = \mbox{\f $\dis\frac{L_{n+1}}{2}$} \Big\} \not= \phi\Big\}\,.
\end{equation}

\n
The argument leading to (\ref{3.13}) applies here as well and shows that
\begin{equation}\label{4.6}
q_{n+1}(u) \le c\,\ell^2_n \;\sup\limits_{\ov{m}_1 \in \cK_1, \ov{m}_2 \in \cK_2} \IP[B^u_{\ov{m}_1} \cap B^u_{\ov{m}_2}], \;\mbox{for $u \ge 0$}\,.
\end{equation}

\n
From now on we assume that
\begin{equation}\label{4.7}
u \le 1\,.
\end{equation}

\n
For $\ov{m}_1 \in \cK_1$, $\ov{m}_2 \in \cK_2$, we define $V = \wt{D}_{\ov{m}_1} \cup \wt{D}_{\ov{m}_2}$ and write
\begin{equation}\label{4.8}
\mu_{V,u} = \delta_{1,1} + \delta_{1,2} + \delta_{2,1} + \delta_{2,2}\,,
\end{equation}

\medskip\n
with a similar definition as in (\ref{3.14}) or (\ref{2.8}), so that the $\delta_{i,j}(dw)$, $ 1 \le i, j \le 2$, are independent Poisson point processes on $W_+$ with respective intensity measures $\zeta_{i,j}, 1 \le i, j \le 2$, given by analogous formulas as in (\ref{2.10}), except that $K$ and $K + x$ are now respectively replaced by $\wt{D}_{\ov{m}_1}$ and $\wt{D}_{\ov{m}_2}$. With a similar notation as in (\ref{3.15}), when $\Lambda(dw)$ is a random point process on $W_+$, we write:
\begin{equation}\label{4.9}
\begin{split}
B_{\ov{m}}(\Lambda) = \big\{\o \in \Omega; &\; \mbox{$D_{\ov{m}}$ and $\wt{V}_{\ov{m}}$ are connected by a $*$-nearest-neighbor path}
\\
&\;\mbox{in $\wt{D}_{\ov{m}} \cap \big(\bigcup\limits_{w \in {\rm Supp}(\Lambda(\o))} w(\IN)\big)\big\}, \;\ov{m} \in J_n$}\,.
\end{split}
\end{equation}

\n
For instance we now see with (\ref{1.54}) that
\begin{equation}\label{4.10}
B^u_{\ov{m}} = B_{\ov{m}}(\mu_{K,u}) \;\mbox{for any $K \supseteq \wt{D}_{\ov{m}}, \;\ov{m} \in J_n$}\,.
\end{equation}

\n
Specializing to $K = V$, and noting that
\begin{equation*}
\wt{D}_{\ov{m}_i} \cap \big(\bigcup\limits_{w \in {\rm Supp}(\delta_{j,j})} w(\IN)\big) = \phi, \;\mbox{when $\{i,j\} = \{1,2\}$}\,,
\end{equation*}
we obtain the identities:
\begin{equation}\label{4.11}
\begin{split}
B^u_{\ov{m}_1} & = B_{\ov{m}_1}(\mu_{V,u}) = B_{\ov{m}_1} (\delta_{1,1} + \delta_{1,2} + \delta_{2,1})\,,
\\[1ex]
B^u_{\ov{m}_2} & = B_{\ov{m}_2}(\mu_{V,u}) = B_{\ov{m}_2} (\delta_{2,2} + \delta_{2,1} + \delta_{1,2})\,.
\end{split}
\end{equation}

\n
Due to the independence of the $\delta_{i,j}$, $1 \le i,j \le 2$, it follows that for $\ov{m}_i \in \cK_i$, $i = 1,2$, we have:
\begin{equation}\label{4.12}
\begin{split}
&\IP[B^u_{\ov{m}_1} \cap B^u_{\ov{m}_2}] = \IP\big[B_{\ov{m}_1} (\delta_{1,1} + \delta_{1,2} + \delta_{2,1}) \cap B_{\ov{m}_2} (\delta_{2,2} + \delta_{2,1} + \delta_{1,2})\big]
\\[1ex]
&\le  \IP\big[B_{\ov{m}_1} (\delta_{1,1}) \cap B_{\ov{m}_2} (\delta_{2,2}) , \,\delta_{1,2} = \delta_{2,1} = 0\big] + \IP[\delta_{1,2} \;\mbox{or} \;\delta_{2,1} \not= 0]
\\[1ex]
& \le \IP[B_{\ov{m}_1}(\delta_{1,1})] \,\IP[B_{\ov{m}_2}(\delta_{2,2})] + \IP[\delta_{1,2} \not= 0] + \IP[\delta_{2,1} \not= 0]
\\[1ex]
&\hspace{-3ex} \stackrel{(\ref{4.2}), (\ref{4.11})}{\le} q_n(u)^2 + 1 - e^{-\zeta_{1,2}(W_+)} + 1 - e^{-\zeta_{2,1}(W_+)} 
\\[1ex]
&\le q_n(u)^2 + u\big(P_{e_V}[X_0 \in \wt{D}_{\ov{m}_1}, H_{\wt{D}_{\ov{m}_2}} < \infty] + P_{e_V} [X_0 \in \wt{D}_{\ov{m}_2}, H_{\wt{D}_{\ov{m}_1}} < \infty]\big)
\\[1ex]
&\hspace{-1ex} \stackrel{(\ref{4.7})}{\le}  q_n(u)^2 + c \,L^2_n \;\dis\frac{L^2_n}{L^{d-2}_{n+1}} \;,
\end{split}
\end{equation}

\n
where in the last step we used (\ref{1.6}), (\ref{1.8}) together with standard bounds on the Green function, cf.~\cite{Lawl91}, p.~31. With (\ref{4.6}), (\ref{4.12}), we thus see that
\begin{equation}\label{4.13}
q_{n+1}(u) \le c_5\,\ell^2_n \big(q^2_n(u) + L^4_n \,L^{-(d-2)}_{n+1} \big)\,.
\end{equation}
We thus define
\begin{equation}\label{4.14}
b_n = c_5 \,\ell^2_n \,q_n(u), \;\mbox{for $n \ge 0$}\,,
\end{equation}
and see that
\begin{equation*}
b_{n+1} \le \Big(\dis\frac{\ell_{n+1}}{\ell_n}\Big)^2 \;b^2_n + c(\ell_{n+1} \ell_n)^2 \,L^4_n \,L^{-(d-2)}_{n+1}, \;\mbox{for $n \ge 0$}\,.
\end{equation*}

\n
With (\ref{3.2}) we know that $(\ell_n \ell_{n+1})^2 \le c\,L^{2a}_n \;L^{2a}_{n+1} \le c\,L_n^{4a + 2a^2}$, and with (\ref{3.57}) we know that $\frac{\ell_{n+1}}{\ell_n} \le c\,L^{a^2}_n$. As a result we obtain:
\begin{equation}\label{4.15}
b_{n+1}  \le c (L^{2a^2}_n \;b^2_n + L_n^{-(d-2)(1+a) + 4 + 4a + 2a^2}) \stackrel{d \ge 7}{\le} c_6 \;L^{2a^2}_n \,(b^2_n + L^{-1}_n) \,.
\end{equation}
We will now use the following induction lemma.

\begin{lemma}\label{lem4.2} $(d \ge 7)$

\medskip
If $L_0 \ge c$, then for any $u \le 1$, when for some $n \ge 0$,
\begin{equation}\label{4.16}
b_n \le L_n^{-\frac{1}{2}}\;,
\end{equation}

\n
then {\rm (\ref{4.16})} holds as well with $n+1$ in place of $n$.
\end{lemma}

\begin{proof}
With (\ref{4.15}) we see that
\begin{equation}\label{4.17}
b_{n+1} \le 2 c_6 \,L_n^{2a^2 - 1} \stackrel{(\ref{3.2})}{\le} c\,L^{-\frac{1}{2}}_{n+1} \;L_n^{\frac{1}{2} \,(1 + a) + 2a^2 - 1} \stackrel{(\ref{3.1})}{\le} c\,L^{-\frac{1}{2}}_{n+1} \;L_0^{-\frac{1}{4}} \le L_{n+1}^{-\frac{1}{2}} \;,
\end{equation}

\medskip\n
when $L_0 \ge c$. This proves our claim.
\end{proof}

We now choose $L_0 \ge c$, so that for any $u \le 1$, when $b_0 \le L_0^{-\frac{1}{2}}$ holds then $b_n \le L_n^{-\frac{1}{2}}$ for all $n \ge 0$. Further picking $u \le c(L_0) (\le 1)$, we see that for $m \in J_0$,
\begin{equation}\label{4.18}
\begin{split}
b_0 \stackrel{(\ref{4.14})}{=} c_5 \,\ell^2_0 \,q_0(u) & \le c_5\, \ell^2_0 \,\IP[\cI^u \cap \wt{D}_m \not= \phi] \le c\,L^{2(1+a)}_0 \IP[0 \in \cI^u]
\\[1ex]
&\hspace{-1ex} \stackrel{(\ref{1.58})}{=} c\, L^{2(1+a)}_0 \,(1 - e^{-u/g(0)}) \le L_0^{-\frac{1}{2}}\,.
\end{split}
\end{equation}

\n
With this choice of $L_0$ and $u$ we thus find that $b_n \le L_n^{-\frac{1}{2}}$ for all $n \ge 0$, and this concludes the proof of Proposition \ref{prop4.1}.
\end{proof}

\begin{theorem}\label{theo4.3} $(d \ge 7)$

\medskip
For small $u > 0$ the vacant set $\cV^u$ does percolate, i.e.
\begin{equation}\label{4.19}
u_* > 0\,,
\end{equation}
and for $u < u_*$, $\IP[{\rm Perc}(u)] = 1$.
\end{theorem}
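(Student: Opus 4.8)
The plan is to convert the scale‑by‑scale decay of the occupied crossing probabilities $q_n(u)$ obtained in Proposition \ref{prop4.1} into percolation of $\cV^u$ inside the planar slice $\IZ^2 \subseteq \IZ^d$, via a Peierls‑type argument exploiting planar duality. First I would reduce the statement to a lower bound on $\eta(u)$: since a nearest neighbor path in $\IZ^2$ is a nearest neighbor path in $\IZ^d$ and $\cV^u \cap \IZ^2 \subseteq \cV^u$, the quantity $\IP[0\text{ lies in an infinite connected component of }\cV^u\cap\IZ^2]$ is a lower bound for $\eta(u)$, so it suffices to show this quantity is positive whenever $L_0$ is large and $u \le c(L_0)$. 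Granting that, one gets $u_* \ge c(L_0) > 0$, which is (\ref{4.19}), and then for $u < u_*$ the monotonicity of $\eta(\cdot)$ together with Corollary \ref{cor2.3} i) yields $\IP[{\rm Perc}(u)] = 1$.

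Next I would invoke planar duality in $\IZ^2$ (cf.~\cite{Grim99}): if $0$ is not in an infinite component of $\cV^u \cap \IZ^2$, then $\cI^u \cap \IZ^2$ contains a $*$-nearest neighbor circuit $\gamma$ surrounding $0$. The heart of the matter is the deterministic observation that such a $\gamma$ always realizes a renormalization crossing event at a suitable scale. Concretely, set $\rho(\gamma) = \max_{z\in\gamma}|z|_\infty$; if $\rho(\gamma) \ge 10 L_0$, pick $n \ge 0$ with $10 L_n \le \rho(\gamma) < 10 L_{n+1}$, and let $D_m$ be the level-$n$ box containing a point $p_0 \in \gamma$ with $|p_0|_\infty = \rho(\gamma)$. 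Then $\gamma \subseteq B(0,\rho(\gamma)) \subseteq B(0,10 L_{n+1})$, so $D_m$ lies in the set $\cM_n$ of level-$n$ labels whose box meets $B(0,10 L_{n+1})$; moreover $\gamma \cap D_m \ni p_0$ while $\mathrm{diam}_\infty(\gamma) \gtrsim \rho(\gamma) \ge 10 L_n$ exceeds $\mathrm{diam}_\infty(\wt C_m\text{-type box})\asymp 3L_n$, so $\gamma \not\subseteq \wt D_m$ and the arc of $\gamma$ through $p_0$ is a $*$-path in $\wt D_m$ from $D_m$ to $\wt V_m$, i.e.\ $B^u_m$ occurs. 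The finitely many circuits with $\rho(\gamma) < 10 L_0$ must be treated separately: there are at most $c(L_0)$ of them, and each is contained in $\cI^u$ with probability at most $\IP[z\in\cI^u] = 1-e^{-u/g(0)}$ (for any of its vertices $z$, cf.~(\ref{1.58})), contributing in total at most $c(L_0)(1-e^{-u/g(0)})$, which is $<\tfrac12$ once $u \le c(L_0)$ is small. Altogether this gives the inclusion
\[
\{0\notin\mbox{infinite component of }\cV^u\cap\IZ^2\}\ \subseteq\ \Gamma_{L_0}\ \cup\ \bigcup_{n\ge0}\ \bigcup_{m\in\cM_n} B^u_m\,,
\]
with $\Gamma_{L_0}$ the event that some occupied $*$-circuit of radius $<10L_0$ surrounds $0$.

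It then remains to bound $\sum_{n\ge0}|\cM_n|\,q_n(u)$. A box of side $\asymp L_{n+1}$ contains $O(\ell_n^2)$ boxes of side $L_n$, so $|\cM_n| \le c\,\ell_n^2$; and by Proposition \ref{prop4.1}, for $L_0 \ge c$ and $u \le c(L_0)$ one has $q_n(u) \le (c_5\ell_n^2)^{-1}L_n^{-1/2}$, whence $|\cM_n|\,q_n(u) \le c\,L_n^{-1/2}$. Using $L_n \ge L_0^{(1+a)^n} \ge L_0^{1+na}$ from (\ref{3.3}), the series is at most $c\,L_0^{-1/2}\sum_{n\ge0}L_0^{-na/2} \le c'\,L_0^{-1/2}$, which is $<\tfrac12$ for $L_0$ large. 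Combining with the bound on $\Gamma_{L_0}$, for $L_0$ large and $u \le c(L_0)$ we obtain $\IP[0\notin\mbox{infinite component of }\cV^u\cap\IZ^2] < 1$, hence $\eta(u) > 0$, and the theorem follows as described.

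The genuinely hard work is upstream, in Proposition \ref{prop4.1} (already established): producing a recursion for $q_n$ strong enough to absorb the combinatorial loss of order $\ell_n^2$ at every scale, which is exactly what forces $d \ge 7$ through the requirement $-(d-2)(1+a)+4+4a+2a^2 < -1$ in (\ref{4.15}). Within the argument above the only slightly delicate point is the deterministic geometric lemma identifying, for each occupied $*$-circuit around $0$, a crossing event $B^u_m$ that it realizes with a per-scale multiplicity controlled by $c\,\ell_n^2$; everything else is routine Peierls-type bookkeeping combined with the dyadic-type growth $L_n \ge L_0^{(1+a)^n}$.
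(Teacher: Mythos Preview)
Your proposal is correct and follows essentially the same route as the paper's proof: planar duality to produce an occupied $*$-circuit around $0$ in $\IZ^2$, then a union bound over scales combined with Proposition \ref{prop4.1}. The only differences are tactical---the paper localizes the circuit by where it crosses the segment $[L_n,L_{n+1}-1]\,e_1$ (giving $\sim\ell_n$ boxes per scale rather than your $\sim\ell_n^2$, though your count is still absorbed by the $\ell_n^2$ in (\ref{4.3})), and handles the short-range piece via $\IP[\cI^u\cap B(0,L_{n_0}-1)\cap\IZ^2\ne\phi]$ with $n_0$ chosen large, rather than by enumerating circuits with $\rho(\gamma)<10L_0$.
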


\begin{proof}
We only need to prove (\ref{4.19}) thanks to Corollary \ref{cor2.3}. We choose $L_0$ large and $u \le c (L_0)$ so that (\ref{4.3}) holds for all $n \ge 0$. Then for $n_0 \ge 0$ and $M = L_{n_0} -1$, we can write:
\begin{equation}\label{4.20}
\begin{split}
1 - \eta(u)  \le&\; \IP[\mbox{$0$ does not belong to an infinite connected component of $\cV^u \cap \IZ^2]$}
\\
 \le &\; \IP[\cI^u \cap B(0,M) \cap \IZ^2 \not= \phi] + \IP[\cI^u \cap \big(\IZ^2 \backslash B(0,M)\big) \;\mbox{contains a}
\\[-0.5ex]
&\;\mbox{$*$-nearest neighbor circuit surrounding $0] \stackrel{(\ref{1.58})}{\le} c\,M^2 (1- e^{-u/g(0)}) \;+$}
\\
&\; \dsl_{n \ge n_0} \IP\big[\cI^u \cap \big(\IZ^2 \backslash B(0,M)\big) \;\mbox{contains a $*$-nearest neighbor circuit}
\\ 
&\;\mbox{containing $0$ and passing through a point in $[L_n, L_{n+1} - 1]\,e_1]$}
\\[1ex]
 \le & \;c\,L^2_{n_0} \,u + \dsl_{n \ge n_0} \;\dsl_m \;\IP[B^u_m]\,,
\end{split}
\end{equation}

\n
where $m$ runs over the collection of labels at level $n$ of boxes $D_m$ intersecting the segment $[L_n, L_{n+1} - 1] \,e_1$, ($(e_1,...,e_d)$ stands for the canonical basis of $\IR^d$, and recall we identified $\IZ^2$ with $\IZ e_1 + \IZ e_2$). With (\ref{3.2}) this collection has cardinality at most $\ell_n \le c\,L_n^a$,  and we thus find that
\begin{equation}\label{4.21}
1 - \eta(u) \le c\,L^2_{n_0} \,u + \dsl_{n \ge n_0} \,c\,L^a_n \,L_n^{-\frac{1}{2}} \;\stackrel{(\ref{3.1})}{\le} c(L^2_{n_0} \,u + \dsl_{n \ge n_0} \,L_n^{-\frac{1}{4}}) \,.
\end{equation}

\n
Choosing $n_0$ large and then $u \le c(L_0, n_0)$, we find that $1 - \eta(u) < 1$, and this proves (\ref{4.19}). 
\end{proof}

\begin{remark}\label{rem4.4} ~ \rm

\medskip\n
1) Combining Theorems \ref{theo3.5} and \ref{theo4.3} we find that when $d \ge 7$,
\begin{equation}\label{4.22}
0 < u_* < \infty\,,
\end{equation}
i.e. $u_*$ is a non-degenerate critical value. This has been extended to all $d \ge 3$ in \cite{SidoSzni08}.

\bigskip\n
2) As a matter of fact the above proof combined with an ergodicity argument, cf. (\ref{2.6}), shows that when $d \ge 7$, for small $u > 0$, $\cV^u$ percolates in  $\IZ^2 \subset \IZ^d$.  This feature remains true for all $d \ge 3$, see Theorem 3.4 of \cite{SidoSzni08}.

\bigskip\n
3) Some other very natural questions remain open. When $u > u_*$, how large is the vacant cluster at the origin? When $u < u_*$, how large can a vacant cluster at the origin be, if it does not meet the infinite cluster, (which is known to be unique thanks to \cite{Teix08})? Is there percolation at criticality (i.e. is $\eta(u_*) > 0$)? What is the asymptotic behavior of $u_*$ for large $d$? These are just a few examples of many unresolved issues concerning percolative properties of the vacant set left by the random interlacements model described in this work. \hfill $\square$
\end{remark}

\end{document}